\newtheorem{theorem}{Theorem}
\crefname{theorem}{Theorem}{Theorems}
\crefname{thm}{Theorem}{Theorems}
\crefname{figure}{Figure}{Figures}
\crefname{lem}{Lemma}{Lemmas}
\newtheorem{thm}[theorem]{Theorem}
\newtheorem{lem}[theorem]{Lemma}
\theoremstyle{definition}
\title{Small unit-distance graphs in the plane}
\author{Aidan Globus \and Hans Parshall}
\date{}
\tikzset{every node/.style={shape=circle,fill=black,inner sep=0pt, minimum size=0.5em,anchor=mid}}
\begin{document}

\maketitle

\begin{abstract}
We prove that a graph on up to 9 vertices is a unit-distance graph if and only if it does not contain one of 74 so-called minimal forbidden graphs.  This extends the work of Chilakamarri and Mahoney (1995), who provide a similar classification for unit-distance graphs on up to 7 vertices.
\end{abstract}

\section{Introduction}

A graph $G$ on vertices $V$ is said to be {\bf unit-distance} if there exists an embedding $\varphi\colon V \rightarrow \mathbf{R}^2$ such that for every pair of adjacent vertices $v,w \in V$, we have $|\varphi(v) - \varphi(w)| = 1$; here and throughout we use $| \cdot |$ to denote the usual Euclidean norm.  Several long-standing open problems concern unit-distance graphs.  For instance, Erd\H{o}s' unit-distance problem asks for the maximum number of edges $u(n)$ over all unit-distance graphs on $n$ vertices.  The lower bound of $u(n) \geq n^{1 + \frac{c}{\log\log n}}$ for some fixed $c > 0$ has not been improved since the 1946 paper of Erd\H{o}s~\cite{erdos46}, and it is suspected to be close to the truth.  Spencer, Szemer\'{e}di and Trotter~\cite{spencer84} established the upper bound of $u(n) \leq Cn^{4/3}$ for some fixed $C > 0$.  While this has not been improved, there are now several distinct proofs of the same upper bound; see~\cite{szemeredi16} for a particularly elegant argument and a more thorough overview.  Erd\H{o}s also popularized the Hadwiger--Nelson problem, which asks for the chromatic number $\chi(\mathbf{R}^2)$ of the infinite unit-distance graph with vertex set $\mathbf{R}^2$ and an edge between $v,w \in \mathbf{R}^2$ exactly when $|v - w| = 1$.  The history of the problem is carefully documented by Soifer~\cite{soifer09}; he credits Nelson with the lower bound of $\chi(\mathbf{R}^2) \geq 4$ and Isbell with the upper bound of $\chi(\mathbf{R}^2) \leq 7$.  These bounds remained best known for over half a century, but recently de~Grey~\cite{deGrey18} proved $\chi(\mathbf{R}^2) \geq 5$ by providing a 5-chromatic unit-distance graph on 1581 vertices.  A smaller 5-chromatic unit-distance graph with only 553 vertices has been produced by Heule~\cite{heule18}.

When a graph is not unit-distance, we call it {\bf forbidden}.  Unit-distance graphs can be frustrating to study in part because it is typically difficult to determine if a given graph is unit-distance or forbidden.  For instance, it was conjectured by Chv\'{a}tal~\cite{chvatal72} that the so-called Heawood graph on 14 vertices was forbidden.  Decades later, Gerbracht~\cite{gerbracht09} refuted this conjecture by providing several unit-distance embeddings of the Heawood graph.  More generally, Schaefer~\cite{schaefer13} has shown that deciding whether a given graph is unit-distance has the same complexity as deciding the truth of sentences in the existential theory of the real numbers; this is known to be NP-hard.  To get a feeling for the problem, consider the two graphs on 9 vertices and 15 edges depicted in \cref{mysteryFigure}.  We will show that one of these is unit-distance (see \cref{uglyFigure}), while the other is forbidden (see \cref{n9-last}).

\begin{figure}
\begin{center}
\begin{tikzpicture}[scale=.5]
\node[name=v0] at (2.5,5) {};
\node[name=v1] at (0.8682,4.3969) {};
\node[name=v2] at (0,2.8699) {};
\node[name=v3] at (0.3015,1.1334) {};
\node[name=v4] at (1.6318,0) {};
\node[name=v5] at (3.3682,0) {};
\node[name=v6] at (4.6985,1.1334) {};
\node[name=v7] at (5.0,2.8699) {};
\node[name=v8] at (4.1318,4.3969) {};
\draw (v0)--(v3) (v0)--(v5) (v0)--(v7)
(v1)--(v4)
(v1)--(v7)
(v1)--(v8)
(v2)--(v5)
(v2)--(v6)
(v2)--(v8)
(v3)--(v6)
(v3)--(v7)
(v4)--(v7)
(v4)--(v8)
(v5)--(v8)
(v6)--(v8);
\end{tikzpicture}
\hspace{1em}
\begin{tikzpicture}[scale=.5]
\node[name=v0] at (2.5,5) {};
\node[name=v1] at (0.8682,4.3969) {};
\node[name=v2] at (0,2.8699) {};
\node[name=v3] at (0.3015,1.1334) {};
\node[name=v4] at (1.6318,0) {};
\node[name=v5] at (3.3682,0) {};
\node[name=v6] at (4.6985,1.1334) {};
\node[name=v7] at (5.0,2.8699) {};
\node[name=v8] at (4.1318,4.3969) {};
\draw
(v0)--(v3)
(v0)--(v5)
(v0)--(v8)
(v1)--(v4)
(v1)--(v6)
(v1)--(v7)
(v2)--(v5)
(v2)--(v6)
(v2)--(v7)
(v3)--(v7)
(v3)--(v8)
(v4)--(v6)
(v4)--(v8)
(v5)--(v7)
(v6)--(v8);
\end{tikzpicture}
\end{center}
\caption{We will see that one of these two graphs is unit-distance (see \cref{uglyFigure}), while the other is forbidden (see \cref{n9-last}).}\label{mysteryFigure} 
\end{figure}
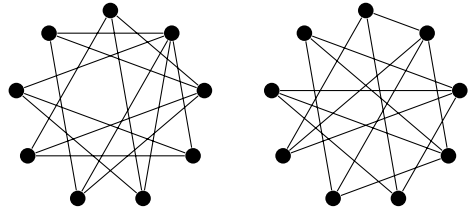

The goal of this article is to better understand unit-distance graphs by studying small obstructions.   We say that a forbidden graph is {\bf  minimal} when each of its proper subgraphs is unit-distance.  It is easy to see that both the complete graph $K_4$ and the complete bipartite graph $K_{2,3}$ are minimal forbidden graphs.  In fact, every graph on up to 5 vertices is unit-distance if and only if it does not contain either $K_4$ or $K_{2,3}$ as a subgraph.  Chilakamarri and Mahoney~\cite{chilakamarri95} extended this observation by proving that a set of six graphs, which we denote by $\mathcal{F}_{\leq 7}$ and depict in \cref{n7forbidden}, is the complete set of minimal forbidden graphs on up to 7 vertices.  In other words, every graph on up to 7 vertices is forbidden if and only if it contains a subgraph isomorphic to an element of $\mathcal{F}_{\leq 7}$.  Here we will extend their result to provide the complete list of minimal forbidden graphs on up to 9 vertices.  Let $\mathcal{F}_{\leq 9}$ denote the set of 74 graphs depicted in Appendix A, which consists of $\mathcal{F}_{\leq 7}$, 13 graphs on 8 vertices, and 55 graphs on 9 vertices.  We follow the notation of Chilakamarri and Mahoney and label these graphs $F(n,m,i)$, where $n$ indicates number of vertices, $m$ indicates number of edges, and the last index $i$ indicates only the order of appearance within this article.  Our main result is:

\begin{figure}[b!]
\begin{center}
\begin{tabular}{cccccc}
\begin{tikzpicture}
\node[name=1] at (0,0) {};
\node[name=2] at (1,0) {};
\node[name=3] at (1/2,{sqrt(3)/2}) {};
\node[name=4] at (3/2,{sqrt(3)/2}) {};
\draw (1) --(3) -- (2) -- (4) -- (1) -- (2) (3) -- (4);
\end{tikzpicture}
&
\begin{tikzpicture}
\node[name=1] at (0,0) {};
\node[name=2] at (1,0) {};
\node[name=3] at (1/2,{sqrt(3)/2}) {};
\node[name=4] at (3/2,{sqrt(3)/2}) {};
\node[name=5] at (3/4,{sqrt(3)/4}) {};
\draw (1) -- (2) -- (4) -- (3) -- (1) -- (5) -- (4);
\end{tikzpicture}
&
\begin{tikzpicture}
\node[name=1] at (0,0) {};
\node[name=2] at (1,0) {};
\node[name=3] at (2,0) {};
\node[name=4] at (1/2,{sqrt(3)/2}) {};
\node[name=5] at (3/2,{sqrt(3)/2}) {};
\node[name=6] at (1,-1/2) {};
\draw (4) -- (1) -- (2) -- (3) -- (5) -- (4) -- (2) -- (5) (1) -- (6) -- (3);
\end{tikzpicture}
&
\begin{tikzpicture}
\node[name=1] at (0,0) {};
\node[name=2] at (1,0) {};
\node[name=3] at (1/2,{sqrt(3)/2}) {};
\node[name=4] at (0,1) {};
\node[name=5] at (1,1) {};
\node[name=6] at (1/2,{1 + sqrt(3)/2}) {};
\node[name=7] at ({-sqrt(3)/2},1/2) {};
\draw (6) -- (4) -- (1) -- (2) -- (3) -- (6) -- (5) -- (2) (1) -- (3) (4) -- (7) -- (2);
\end{tikzpicture}
&
\begin{tikzpicture}
\node[name=1] at (0,0) {};
\node[name=2] at (1,0) {};
\node[name=3] at (2,0) {};
\node[name=4] at (1/2,{sqrt(3)/2}) {};
\node[name=5] at (3/2,{sqrt(3)/2}) {};
\node[name=6] at (5/2,{sqrt(3)/2}) {};
\node[name=7] at (5/2,-1/2) {};
\draw (4) -- (1) -- (2) -- (3) -- (5) -- (4) -- (2) -- (5) -- (6) -- (3);
\draw (1) -- (7) -- (6);
\end{tikzpicture}
&
\begin{tikzpicture}
\node[name=1] at (0,0) {};
\node[name=2] at (0,{sqrt(3)}) {};
\node[name=3] at (-1/2,{sqrt(3)/2}) {};
\node[name=4] at (1/2,{sqrt(3)/2}) {};
\node[name=5] at (3/2,{sqrt(3)/2}) {};
\node[name=6] at (1,0) {};
\node[name=7] at (1,{sqrt(3)}) {};
\draw (3) -- (1) -- (4) -- (2) -- (3) -- (4) -- (5) -- (7) -- (6) -- (1) (7) -- (2) (5) -- (6);
\end{tikzpicture}\\
$F(4,6,1)$ & $F(5,6,1)$ & $F(6,9,1)$ & $F(7,10,1)$ & $F(7,11,1)$ & $F(7,11,2)$
\end{tabular}
\end{center}

\caption{Chilakamarri and Mahoney~\cite{chilakamarri95} proved that these six graphs form the set $\mathcal{F}_{\leq 7}$ of all minimal forbidden graphs on up to 7 vertices.}\label{n7forbidden}
\end{figure}
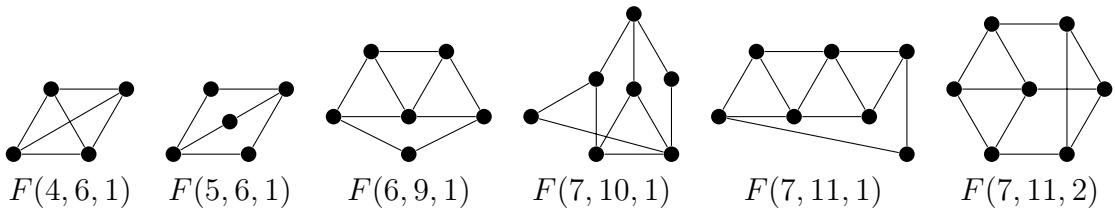

\begin{thm}\label{introTheorem}
A graph on at most 9 vertices is forbidden if and only if it contains a subgraph isomorphic to an element of $\mathcal{F}_{\leq 9}$.
\end{thm}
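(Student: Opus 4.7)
The theorem's ``if'' direction (containing a subgraph from $\mathcal{F}_{\leq 9}$ implies forbidden) is immediate from the definitions, provided each $F \in \mathcal{F}_{\leq 9}$ is itself forbidden: any unit-distance embedding of a supergraph restricts to one of the subgraph. Hence the substantive content splits into two tasks: verifying that each of the 74 listed graphs is forbidden, and, conversely, showing that every graph on $\leq 9$ vertices not containing an element of $\mathcal{F}_{\leq 9}$ as a subgraph is unit-distance.

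The plan for the ``only if'' direction is exhaustive enumeration. Using standard software such as \texttt{nauty}'s \texttt{geng}, I would generate, up to isomorphism, every graph on $n$ vertices for $n = 8$ and $n = 9$; the case $n \leq 7$ is already handled by Chilakamarri and Mahoney. For each generated $G$, a subgraph-isomorphism check against the 74 elements of $\mathcal{F}_{\leq 9}$ determines whether $G$ is already ruled out by a smaller forbidden graph. For the graphs that survive this check, an explicit unit-distance embedding must be exhibited. In practice, such embeddings can be found by numerically minimizing the squared error $\sum_{vw \in E(G)} (|\varphi(v) - \varphi(w)|^2 - 1)^2$ via local descent from many random initial positions; when $G$ really is unit-distance this typically converges to a valid embedding, which can then be recorded exactly (often with algebraic coordinates) and verified rigorously by substitution into the edge equations.

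The hard part, and the main obstacle, is to certify that the 13 new $8$-vertex and 55 new $9$-vertex candidates in $\mathcal{F}_{\leq 9}$ are genuinely forbidden. After fixing a gauge (two adjacent vertices placed at $(0,0)$ and $(1,0)$), the existence of a unit-distance embedding is equivalent to the real solvability of a polynomial system in $2n - 4$ variables with one degree-2 equation per edge. My approach is to branch on the discrete circle-intersection choices: each newly placed vertex lies on the intersection of circles around already-placed neighbors, typically yielding two candidate positions (reflections across an edge), and each resulting branch reduces to checking solvability of a smaller polynomial system. Emptiness of the real variety in each branch can be certified via Gr\"obner basis computation combined with a real-root test such as Sturm sequences or cylindrical algebraic decomposition, or, for favorable graphs, via a direct geometric argument exploiting the rigidity that cascades from any forced equilateral triangles. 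Careful bookkeeping is required both to handle the 68 new candidates uniformly and to manage degenerate configurations (coincident points, collinearities) that can otherwise obscure the analysis.

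Finally, although the theorem statement does not itself assert minimality of $\mathcal{F}_{\leq 9}$, the enumeration above delivers it as a byproduct: for each $F \in \mathcal{F}_{\leq 9}$, removing any single edge produces a graph that contains no element of $\mathcal{F}_{\leq 9}$ (verified directly against the list), and so the converse-direction enumeration has already supplied a unit-distance embedding of every proper subgraph of $F$.
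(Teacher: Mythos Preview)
Your strategy is sound and would prove the theorem, but the paper executes both halves differently in ways worth noting. For the ``only if'' direction, the paper first reduces to biconnected graphs (a graph is unit-distance iff each biconnected component is), then, rather than embedding each $\mathcal{F}_{\leq 9}$-free survivor individually, it constructs a single embedded unit-distance graph $G_{27}$ on 27 vertices containing every biconnected $\mathcal{F}_{\leq 8}$-free graph on 8 vertices as a subgraph, and extends it to $G_{118}$ for the 9-vertex case; only two stubborn graphs $H_1,H_2$ required separate coordinates found via cylindrical algebraic decomposition. This universal-supergraph certificate is far more compact than a list of thousands of individual embeddings. For the forbidden direction, instead of your uniform branch-and-Gr\"obner scheme, the paper handles almost all 68 new graphs by short geometric arguments: many contain a rigid subgraph forcing an impossible distance, and many others contain one of four specific ``totally unfaithful'' subgraphs (graphs in which some non-edge is forced to have length~1 in every embedding), so that adding the forced edge yields a previously-known forbidden graph. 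Only a handful require solving small polynomial systems explicitly. Your approach buys uniformity and automation; the paper's buys human-readable proofs and geometric insight, at the cost of ad hoc case analysis.
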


We will begin in the next section by introducing terminology and conventions that we will use throughout.  In Section 3, we will prove \cref{n8theorem}, which classifies the minimal forbidden graphs on 8 vertices.  In Section 4, we will prove \cref{n9theorem}, which classifies the minimal forbidden graphs on 9 vertices.  \cref{introTheorem} follows by combining the classification of $\mathcal{F}_{\leq 7}$ with \cref{n8theorem,n9theorem}.  In Appendix A, we collect the set of minimal forbidden graphs on up to 9 vertices, and in Appendix B, we report coordinates for embedded unit-distance graphs that we use in the proofs of \cref{n8theorem,n9theorem}.

Our approach relies heavily on the freely available computational tools \texttt{nauty}~\cite{mckay14} (to generate graphs) and \texttt{SageMath}~\cite{sagemath} (to work with them).  In the proof of \cref{n9theorem}, we found it necessary to use the implementation of cylindrical algebraic decomposition~\cite{collins75} within \texttt{Mathematica}~\cite{mathematica} to generate embeddings for the two unit-distance graphs depicted in \cref{uglyFigure}.  We intend for our computations to be readily reproducible, and we have made our code available~\cite{smallUDcode}.

\section{Preliminaries}

We define an {\bf embedding} of a unit-distance graph $G$ on vertices $V$ to be an injection $\varphi\colon V \rightarrow \mathbf{R}^2$ with the property that every pair of adjacent vertices $v,w \in V$ satisfies $|\varphi(v) - \varphi(w)| = 1$.  We will say that $G$ is {\bf rigid} if for every pair of its embeddings $\varphi, \psi$ and every pair of vertices $v,w \in V$, we have $|\varphi(v) - \varphi(w)| = |\psi(v) - \psi(w)|$.  We will frequently fix coordinates for rigid subgraphs of a unit-distance graph without impacting the generality of our arguments.

We will say that a unit-distance graph is {\bf embedded} when its vertices are distinct points in $\mathbf{R}^2$ and each of its edges are line segments of length 1.  We allow for the possibility that non-adjacent vertices are distance 1 apart.  Elementary geometry ensures that within any embedded unit-distance graph, every 3-cycle forms an equilateral triangle and every 4-cycle forms a rhombus.  We will use these observations extensively in the following form.

\begin{lem}\label{triangleRhombusLemma}
For any embedded unit-distance graph $G$, the following hold.
\begin{enumerate}
    \item[(i)] The angle between any two edges of a 3-cycle in $G$ is $\pi/3$.
    \item[(ii)] Opposite edges of a 4-cycle in $G$ are parallel.
\end{enumerate}
\end{lem}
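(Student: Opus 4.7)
The plan is to handle both parts by direct appeal to elementary Euclidean geometry.

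For (i), the three vertices of a 3-cycle are, after embedding, distinct points (by the injectivity requirement) that are pairwise at distance $1$. They therefore form an equilateral triangle of unit side length, whose interior angles are all $\pi/3$; this interior angle is precisely the angle between the two edges of the cycle meeting at each vertex.

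For (ii), let $v_1, v_2, v_3, v_4$ denote the vertices of the 4-cycle listed in cyclic order. They are four distinct points in $\mathbf{R}^2$ satisfying $|v_i - v_{i+1}| = 1$ for each $i$ (indices taken mod $4$). The key observation is that $|v_1 - v_2| = |v_2 - v_3|$ places $v_2$ on the perpendicular bisector $\ell$ of the segment $v_1 v_3$, and symmetrically $v_4 \in \ell$; the analogous argument puts $v_1$ and $v_3$ on the perpendicular bisector $\ell'$ of $v_2 v_4$. Once one checks that $\ell \neq \ell'$, the two bisectors intersect in a unique point which, by construction, is the common midpoint of both diagonals. Equating midpoints gives $v_1 + v_3 = v_2 + v_4$, i.e., $v_2 - v_1 = v_3 - v_4$, so the opposite edges $\{v_1,v_2\}$ and $\{v_4,v_3\}$ represent equal (in particular parallel) vectors.

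The main, if minor, obstacle is the nondegeneracy check in part (ii): both $\ell$ and $\ell'$ are well-defined since $v_1 \neq v_3$ and $v_2 \neq v_4$, and they cannot coincide, because $\ell = \ell'$ would force all four $v_i$ to lie on a single line---a short case analysis, using the four consecutive unit-distance constraints together with the distinctness of the $v_i$, rules this out by showing it is incompatible with $|v_4 - v_1| = 1$.
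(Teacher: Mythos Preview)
Your proof is correct. The paper itself does not supply a proof of this lemma, remarking only that ``elementary geometry ensures'' every embedded 3-cycle is an equilateral triangle and every embedded 4-cycle is a rhombus; your perpendicular-bisector argument for (ii) is a standard way to make this precise and is entirely in the spirit of what the paper intends.
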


We adopt the following conventions when drawing embedded unit-distance graphs.  Black dots and solid line segments represent the vertices and edges, respectively, of the graph under consideration.  White dots will represent relevant points of $\mathbf{R}^2$ that are determined by being distance 1 from other points pictured.  We use dashed edges to represent pairs of points that are required to be distance 1 apart despite not necessarily corresponding to adjacent vertices in the graph under consideration. For instance, we will frequently consider the following unit-distance graph:
\begin{center}
\begin{tikzpicture}
\node[name=1] at (0,0) {};
\node[name=2,label=right:{$y$}] at (1,0) {};
\node[name=3] at (1/2,{sqrt(3)/2}) {};
\node[name=4] at (0,1) {};
\node[name=5,label=right:{$x$}] at (1,1) {};
\node[name=6] at (1/2,{1 + sqrt(3)/2}) {};
\draw (6) -- (4) -- (1) -- (2) -- (3) -- (6) -- (5) (1) -- (3);
\draw (4) -- (5);
\end{tikzpicture}
\end{center}
Due to \cref{triangleRhombusLemma}, it must either be the case that $|x - y| = 1$ or that $x$ lies exactly distance 1 away from the other common unit-distance neighbor of the two neighbors of $y$.  We can separate these exhaustive cases by considering the two following classes of embeddings:
\begin{center}
\begin{tikzpicture}
\node[name=1] at (0,0) {};
\node[name=2,label=right:{$y$}] at (1,0) {};
\node[name=3] at (1/2,{sqrt(3)/2}) {};
\node[name=4] at (0,1) {};
\node[name=5,label=right:{$x$}] at (1,1) {};
\node[name=6] at (1/2,{1 + sqrt(3)/2}) {};
\draw (6) -- (4) -- (1) -- (2) -- (3) -- (6) -- (5) (1) -- (3);
\draw (4) -- (5);
\draw[dashed] (2) -- (5);
\end{tikzpicture}
\hspace{2em}
\begin{tikzpicture}
\node[name=1] at (0,0) {};
\node[name=2,label=right:{$y$}] at (1,0) {};
\node[name=3] at (1/2,{sqrt(3)/2}) {};
\node[name=4] at (0,1) {};
\node[name=5,label=left:{$x$}] at (-1/2,{1 + sqrt(3)/2}) {};
\node[name=6] at (1/2,{1 + sqrt(3)/2}) {};
\node[name=7] at (-1/2,{sqrt(3)/2}) {};
\node[fill=white,minimum size=0.4em] at (-1/2,{sqrt(3)/2}) {};
\draw (6) -- (4) -- (1) -- (2) -- (3) -- (6) -- (5) (1) -- (3);
\draw (4) -- (5);
\draw[dashed] (1) -- (7) -- (5) (3) -- (7);
\end{tikzpicture}
\end{center}
Of course, neither embedding pictured is rigid, but we intend for these illustrations to represent the equivalence class of embeddings with the same constraints imposed by the solid and dashed unit-distance edges.  We do not require white dots to be distinct from black dots.

\begin{figure}[t!]
\begin{center}
\begin{tikzpicture}
\node[name=1] at (0,0) {};
\node[name=2] at (1,0) {};
\node[name=3] at (1/2,{sqrt(3)/2}) {};
\node[name=4] at (0,1) {};
\node[name=5] at (1,1) {};
\node[name=6] at (1/2,{1 + sqrt(3)/2}) {};
\draw (6) -- (4) -- (1) -- (2) -- (3) -- (6) -- (5) -- (2) (1) -- (3);
\draw[dashed] (4) -- (5);
\end{tikzpicture}
\hspace{1em}
\begin{tikzpicture}
\node[name=1] at (0,0) {};
\node[name=2] at (1,0) {};
\node[name=3] at (1/2,{sqrt(3)/2}) {};
\node[name=4] at (3/2,{sqrt(3)/2}) {};
\node[name=5] at (-1/2,{sqrt(3)/2}) {};
\node[name=6] at (0,{sqrt(3)}) {};
\node[name=7] at (1,{sqrt(3)}) {};
\draw (5) -- (1) (3) -- (1) -- (2) -- (3) -- (5) -- (6) -- (7) -- (3) -- (6) (2) -- (4) -- (3);
\draw[dashed] (7) -- (4);
\end{tikzpicture}
\hspace{1em}
\begin{tikzpicture}
\node[name=1] at (0,0) {};
\node[name=2] at (1,0) {};
\node[name=3] at (1/2,{sqrt(3)/2}) {};
\node[name=4] at (3/2,{sqrt(3)/2}) {};
\node[name=5] at (-1/2,{sqrt(3)/2}) {};
\node[name=6] at (0,{sqrt(3)}) {};
\node[name=7] at (1,{sqrt(3)}) {};
\draw (5) -- (1) (3) -- (1) -- (2) -- (3) -- (5) -- (6) -- (7) -- (3) -- (6) (7) -- (4) -- (2);
\draw[dashed] (3) -- (4);
\end{tikzpicture}
\hspace{1em}
\begin{tikzpicture}
\node[name=1] at (0,0) {};
\node[name=2] at (1,0) {};
\node[name=3] at (1/2,{sqrt(3)/2}) {};
\node[name=4] at (3/2,{sqrt(3)/2}) {};
\node[name=5] at (0,1) {};
\node[name=6] at (1,1) {};
\node[name=7] at (1/2,{sqrt(3)/2 + 1}) {};
\node[name=8] at (3/2,{sqrt(3)/2 + 1}) {};
\draw (3) -- (1) -- (2) -- (4) -- (3) -- (2);
\draw (7) -- (5) -- (6) -- (8) -- (7);
\draw (1) -- (5);
\draw (3) -- (7);
\draw (4) -- (8);
\draw[dashed] (7) -- (6) -- (2);
\end{tikzpicture}
\end{center}
\caption{Each graph depicts a totally unfaithful unit-distance graph, drawn with solid edges, with non-adjacent vertices that are necessarily distance 1 apart in every embedding, represented with dashed edges.  To see that each graph is totally unfaithful in this fashion, it is enough to apply \cref{triangleRhombusLemma}.} \label{unfaithfulFigure}
\end{figure}
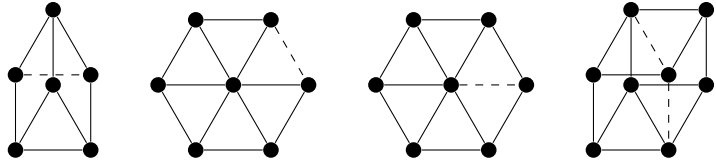

Borrowing terminology from Erd\H{o}s and Simonovits~\cite{erdos80}, we say that an embedding of a unit-distance graph $G$ on vertices $V$ is \textbf{faithful} when, for every pair of vertices $v,w \in V$, $|\varphi(v) - \varphi(w)| = 1$ if and only if $v$ and $w$ are adjacent.  We say that a unit-distance graph is \textbf{totally unfaithful} when it does not admit any faithful embedding.  In \cref{n8-2,n9-unfaithful}, we will prove that several graphs are forbidden by identifying a subgraph isomorphic to one of the totally unfaithful unit-distance graphs depicted in~\cref{unfaithfulFigure}.  Each of these graphs has at least one pair of non-adjacent vertices that are necessarily distance 1 apart in every embedding.  In particular, a graph $G$ containing one of the graphs in \cref{unfaithfulFigure} is unit-distance if and only if the graph obtained by adding the corresponding dashed edge to $G$ is also unit-distance.


\section{Forbidden graphs on eight vertices}

We first leverage rigid subgraphs to prove that a couple of graphs are forbidden.

\begin{lem}\label{n8-1}
The following two graphs are forbidden:
\begin{center}
\begin{tabular}{cc}
\begin{tikzpicture}
\node[name=1] at (0,0) {};
\node[name=2] at (1,0) {};
\node[name=3] at (1/2,{sqrt(3)/2}) {};
\node[name=4] at (3/2,{sqrt(3)/2}) {};
\node[name=5] at (-1/2,{sqrt(3)/2}) {};
\node[name=6] at (0,{sqrt(3)}) {};
\node[name=7] at (2,0) {};
\node[name=8] at (1,1) {};
\draw (5) -- (1) (3) -- (1) -- (2) -- (3) -- (5) -- (6) -- (3) (2) -- (4) -- (3);
\draw (4) -- (7) -- (2);
\draw (7) -- (8) -- (6);
\end{tikzpicture}
&
\begin{tikzpicture}
\node[name=1] at (0,0) {};
\node[name=2] at (1,0) {};
\node[name=3] at (2,0) {};
\node[name=4] at (1/2,{sqrt(3)/2}) {};
\node[name=5] at (3/2,{sqrt(3)/2}) {};
\node[name=6] at (5/2,{sqrt(3)/2}) {};
\node[name=7] at (3,0) {};
\node[name=8] at (3/2,{sqrt(3)/4}) {};
\draw (4) -- (1) -- (2) -- (3) -- (5) -- (4) -- (2) -- (5) -- (6) -- (3);
\draw (3) -- (7) -- (6);
\draw (1) -- (8) -- (7);
\end{tikzpicture}\\
$F(8,13,1)$ & $F(8,13,2)$
\end{tabular}
\end{center}
\end{lem}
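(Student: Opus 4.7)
The plan is, for each of the two graphs, to isolate a rigid $7$-vertex subgraph built as a chain of equilateral triangles and show that the two neighbors of the remaining (degree $2$) vertex are forced to sit at Euclidean distance strictly greater than $2$ in every embedding. This is incompatible with placing that last vertex at unit distance from both, so no embedding exists.

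For $F(8,13,1)$, label the vertices $v_1,\ldots,v_8$ as in the figure so that $v_8$ has neighbors $v_6$ and $v_7$. The subgraph on $\{v_1,\ldots,v_7\}$ contains the five triangles $\{1,2,3\}$, $\{1,3,5\}$, $\{3,5,6\}$, $\{2,3,4\}$, and $\{2,4,7\}$, each sharing an edge with one that was listed earlier. Fixing $v_1=(0,0)$ and $v_2=(1,0)$, \cref{triangleRhombusLemma}(i) forces $v_3=(1/2,\sqrt{3}/2)$ after a harmless reflection. Walking along the chain, each subsequent triangle has exactly two candidate positions for its new apex; in every case one of the two choices coincides with a previously placed vertex, so injectivity of embeddings pins down the other. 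This successively yields $v_5=(-1/2,\sqrt{3}/2)$, $v_6=(0,\sqrt{3})$, $v_4=(3/2,\sqrt{3}/2)$, and $v_7=(2,0)$. A direct computation gives $|v_6-v_7|=\sqrt{7}>2$, which rules out placing $v_8$.

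For $F(8,13,2)$, the degree $2$ vertex $v_8$ has neighbors $v_1$ and $v_7$. Applying the same argument to the chain of triangles $\{1,2,4\}$, $\{2,4,5\}$, $\{2,3,5\}$, $\{3,5,6\}$, $\{3,6,7\}$ in the subgraph on $\{v_1,\ldots,v_7\}$ rigidly fixes $v_1=(0,0)$ and $v_7=(3,0)$, giving $|v_1-v_7|=3>2$, which again prevents $v_8$ from being placed.

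The main obstacle is essentially bookkeeping: at each triangle in the chain one has to check that the ``folded over'' candidate for the new apex lands on a vertex that has already been placed, so that injectivity of embeddings leaves only the intended position. Once this verification is carried out triangle by triangle, everything else reduces to the two elementary distance computations $\sqrt{7}>2$ and $3>2$.
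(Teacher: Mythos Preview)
Your proof is correct and follows essentially the same approach as the paper's: isolate the rigid $7$-vertex triangle-chain subgraph, compute that the two neighbors of the remaining degree-$2$ vertex are forced to sit at distance $\sqrt{7}>2$ (respectively $3>2$), and conclude no common unit-distance neighbor can exist. You supply more detail than the paper does on \emph{why} the subgraph is rigid---walking triangle by triangle and invoking injectivity to discard the folded-over apex---whereas the paper simply asserts rigidity and displays the coordinates.
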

\begin{proof}
Observe that $F(8,13,1)$ contains a subgraph isomorphic to the following rigid unit-distance subgraph:
\begin{center}
\begin{tikzpicture}
\node[name=1,label=left:{$(0,0)$}] at (0,0) {};
\node[name=2] at (1,0) {};
\node[name=3] at (1/2,{sqrt(3)/2}) {};
\node[name=4] at (3/2,{sqrt(3)/2}) {};
\node[name=5] at (-1/2,{sqrt(3)/2}) {};
\node[name=6,label=left:{$(0,\sqrt{3})$}] at (0,{sqrt(3)}) {};
\node[name=7,label=right:{$(2,0)$}] at (2,0) {};
\draw (5) -- (1) (3) -- (1) -- (2) -- (3) -- (5) -- (6) -- (3) (2) -- (4) -- (3);
\draw (4) -- (7) -- (2);
\end{tikzpicture}
\end{center}
If $F(8,13,1)$ were unit-distance, then $(0,\sqrt{3})$ and $(2,0)$ would share a common unit-distance neighbor.  This is impossible since $|(0,\sqrt{3}) - (2,0)| = \sqrt{7} > 2$.

Similarly, $F(8,13,2)$ contains a subgraph isomorphic to the following rigid unit-distance subgraph:
\begin{center}
\begin{tikzpicture}
\node[label=left:{$(0,0)$}, name=1] at (0,0) {};
\node[name=2] at (1,0) {};
\node[name=3] at (2,0) {};
\node[name=4] at (1/2,{sqrt(3)/2}) {};
\node[name=5] at (3/2,{sqrt(3)/2}) {};
\node[name=6] at (5/2,{sqrt(3)/2}) {};
\node[label=right:{$(3,0)$}, name=7] at (3,0) {};
\draw (4) -- (1) -- (2) -- (3) -- (5) -- (4) -- (2) -- (5) -- (6) -- (3);
\draw (3) -- (7) -- (6);
\end{tikzpicture}
\end{center}
If $F(8,13,2)$ were unit-distance, then $(0,0)$ and $(3,0)$ would share a common unit-distance neighbor.  This is impossible since $|(0,0) - (3,0)| = 3 > 2$.
\end{proof}

By locating totally unfaithful subgraphs, we prove that several more graphs are forbidden.

\begin{lem}\label{n8-2}
The following seven graphs are forbidden:

\begin{center}
\begin{tabular}{cccc}
\begin{tikzpicture}
\node[name=1] at (0,0) {};
\node[name=2] at (1,0) {};
\node[name=3] at (1/2,{sqrt(3)/2}) {};
\node[name=4] at (0,1) {};
\node[name=5] at (1,1) {};
\node[name=6] at (1/2,{1 + sqrt(3)/2}) {};
\node[name=7] at (1,3/2) {};
\node[name=8] at (-1/2,{1 + sqrt(3)/2}) {};
\draw (6) -- (4) -- (1) -- (2) -- (3) -- (6) -- (5) -- (2) (1) -- (3);
\draw (4) -- (7) -- (6) -- (8) -- (4);
\end{tikzpicture}
&
\begin{tikzpicture}
\node[name=1] at (0,0) {};
\node[name=2] at (1,0) {};
\node[name=3] at (1/2,{sqrt(3)/2}) {};
\node[name=4] at (0,1) {};
\node[name=5] at (1,1) {};
\node[name=6] at (1/2,{1 + sqrt(3)/2}) {};
\node[name=7] at (1,3/2) {};
\node[name=8] at (-1/2,{1 + sqrt(3)/2}) {};
\draw (6) -- (4) -- (1) -- (2) -- (3) -- (6) -- (5) -- (2) (1) -- (3);
\draw (6) -- (8) -- (4);
\draw (8) -- (7) -- (5);
\end{tikzpicture}
&
\begin{tikzpicture}
\node[name=1] at (0,0) {};
\node[name=2] at (1,0) {};
\node[name=3] at (1/2,{sqrt(3)/2}) {};
\node[name=4] at (3/2,{sqrt(3)/2}) {};
\node[name=5] at (-1/2,{sqrt(3)/2}) {};
\node[name=6] at (0,{sqrt(3)}) {};
\node[name=7] at (1,{sqrt(3)}) {};
\node[name=8] at (3/2,{sqrt(3)}) {};
\draw (5) -- (1) (3) -- (1) -- (2) -- (3) -- (5) -- (6) -- (7) -- (3) -- (6) (2) -- (4) -- (3) -- (8) -- (4);
\end{tikzpicture}
&
\begin{tikzpicture}
\node[name=1] at (0,0) {};
\node[name=2] at (1,0) {};
\node[name=3] at (1/2,{sqrt(3)/2}) {};
\node[name=4] at (3/2,{sqrt(3)/2}) {};
\node[name=5] at (-1/2,{sqrt(3)/2}) {};
\node[name=6] at (0,{sqrt(3)}) {};
\node[name=7] at (1,{sqrt(3)}) {};
\node[name=8] at (2,{sqrt(3)/2}) {};
\draw (5) -- (1) (3) -- (1) -- (2) -- (3) -- (5) -- (6) -- (7) -- (3) -- (6) (2) -- (4) -- (3);
\draw (2) -- (8) -- (7);
\end{tikzpicture}\\
$F(8,12,1)$ & $F(8,12,2)$ & $F(8,13,3)$ & $F(8,13,4)$\\
\\
\begin{tikzpicture}
\node[name=1] at (0,0) {};
\node[name=2] at (1,0) {};
\node[name=3] at (1/2,{sqrt(3)/2}) {};
\node[name=4] at (3/2,{sqrt(3)/2}) {};
\node[name=5] at (-1/2,{sqrt(3)/2}) {};
\node[name=6] at (0,{sqrt(3)}) {};
\node[name=7] at (1,{sqrt(3)}) {};
\node[name=8] at (1,{sqrt(3)/4}) {};
\draw (1) -- (8) -- (4);
\draw (5) -- (1) (3) -- (1) -- (2) -- (3) -- (5) -- (6) -- (7) -- (3) -- (6) (7) -- (4) -- (2);
\end{tikzpicture}
&
\begin{tikzpicture}
\node[name=1] at (0,0) {};
\node[name=2] at (1,0) {};
\node[name=3] at (1/2,{sqrt(3)/2}) {};
\node[name=4] at (3/2,{sqrt(3)/2}) {};
\node[name=5] at (-1/2,{sqrt(3)/2}) {};
\node[name=6] at (0,{sqrt(3)}) {};
\node[name=7] at (1,{sqrt(3)}) {};
\node[name=8] at (1/2,{sqrt(3)/2 - 1/2}) {};
\draw (5) -- (8) -- (4);
\draw (5) -- (1) (3) -- (1) -- (2) -- (3) -- (5) -- (6) -- (7) -- (3) -- (6) (7) -- (4) -- (2);
\end{tikzpicture}
&
\begin{tikzpicture}
\node[name=1] at (0,0) {};
\node[name=2] at (1,0) {};
\node[name=3] at (1/2,{sqrt(3)/2}) {};
\node[name=4] at (3/2,{sqrt(3)/2}) {};
\node[name=5] at (0,1) {};
\node[name=6] at (1,1) {};
\node[name=7] at (1/2,{sqrt(3)/2 + 1}) {};
\node[name=8] at (3/2,{sqrt(3)/2 + 1}) {};
\draw (3) -- (1) -- (2) -- (4) -- (3) -- (2);
\draw (7) -- (5) -- (6) -- (8) -- (7);
\draw (1) -- (5);
\draw (3) -- (7);
\draw (4) -- (8);
\draw (1) -- (6);
\end{tikzpicture}\\
$F(8,13,5)$ & $F(8,13,6)$ & $F(8,13,7)$
\end{tabular}
\end{center}
\end{lem}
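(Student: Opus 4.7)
The plan is to handle all seven graphs uniformly by exhibiting, in each case, a subgraph isomorphic to one of the four totally unfaithful unit-distance graphs from \cref{unfaithfulFigure}. By the observation closing Section~2, a host graph $G$ containing such a subgraph is unit-distance if and only if the graph $G^+$ obtained by adding the corresponding dashed (forced) edge is also unit-distance. It therefore suffices, for each of the seven graphs, to locate a totally unfaithful subgraph, add the forced edge, and identify inside the enlarged graph a subgraph isomorphic to an element of $\mathcal{F}_{\leq 7}$ or to one of the two graphs $F(8,13,1)$, $F(8,13,2)$ already shown forbidden in \cref{n8-1}.

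First I would pair each graph with the appropriate totally unfaithful template. The graphs $F(8,12,1)$ and $F(8,12,2)$ are built on the leftmost template in \cref{unfaithfulFigure} (the 6-vertex ``double rhombus with top and bottom triangles''), each with two additional pendant-type vertices; in both cases the forced unit distance identifies the two vertices at the ``open mouth'' of the template. The graphs $F(8,13,3)$--$F(8,13,6)$ all contain the second or third template (the 7-vertex configuration built around the two triangles sharing an edge), with the extra eighth vertex adjacent to vertices that, together with the forced edge of the template, complete one of the $\mathcal{F}_{\leq 7}$ forbidden graphs or one of $F(8,13,1), F(8,13,2)$. Finally, $F(8,13,7)$ plainly contains the rightmost 8-vertex template of \cref{unfaithfulFigure}.

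Next, for each graph I would write down the enlarged graph $G^+$ and exhibit a forbidden subgraph explicitly. The typical pattern: once the forced edge is in place, an extra triangle gets completed which, together with the surrounding edges, forms either $K_4$, $K_{2,3}$, $F(5,6,1)$, $F(6,9,1)$, $F(7,10,1)$, $F(7,11,1)$, $F(7,11,2)$, or reduces to the rigid configuration ruling out $F(8,13,1)$ or $F(8,13,2)$ from \cref{n8-1}. In each individual case the verification is just combinatorial, checking that a specific vertex subset induces the claimed forbidden graph. I would present the seven cases as a short table listing, for each $F(8,m,i)$: the totally unfaithful template used, the vertex of $F(8,m,i)$ playing each role in the template, the forced edge, and the name of the resulting forbidden subgraph in $G^+$.

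The main obstacle is bookkeeping: getting the vertex labelings right so that the totally unfaithful template really embeds as claimed (in particular, that no two ``distinct'' template vertices are forced to coincide in the host graph), and then correctly recognizing the forbidden subgraph after adjoining the forced edge. A minor subtlety is that a totally unfaithful template may embed in more than one way inside a given $F(8,m,i)$; it does not matter which embedding we choose, since each yields some forced edge whose addition again produces a forbidden graph. The heart of the proof is therefore the case analysis identifying, for each $F(8,m,i)$ in the statement, one valid pairing (template, forced edge, resulting forbidden subgraph), after which the conclusion follows from the combination of \cref{triangleRhombusLemma} (which certifies total unfaithfulness of each template) and the forbiddenness of the subgraphs in $\mathcal{F}_{\leq 7} \cup \{F(8,13,1), F(8,13,2)\}$.
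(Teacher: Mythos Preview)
Your proposal is correct and follows essentially the same route as the paper: locate a totally unfaithful subgraph from \cref{unfaithfulFigure}, add the forced edge, and read off a forbidden configuration. The paper's execution is a bit sharper than your outline: for $F(8,12,1)$, $F(8,12,2)$, and $F(8,13,i)$ with $i\in\{3,4,5,7\}$ the forced edge immediately produces two vertices with three common unit-distance neighbours, i.e.\ a $K_{2,3}=F(5,6,1)$; for $F(8,13,6)$ the forced edge yields two points at distance~$2$ with two common neighbours (equivalently, the augmented graph contains $F(6,9,1)$). So your menu of possible forbidden subgraphs is larger than necessary---in particular $F(8,13,1)$ and $F(8,13,2)$ are never invoked---but the strategy and the logic are the same.
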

\begin{proof}
Suppose, for a contradiction, that $F(8,12,1)$ were unit-distance.  By identifying a subgraph isomorphic to one of the totally unfaithful graphs from \cref{unfaithfulFigure}, we can add a new unit-distance edge to $F(8,12,1)$.  In particular, the following graph, consisting of $F(8,12,1)$ together with the dashed edge, must also be unit-distance.
\begin{center}
\begin{tikzpicture}
\node[name=1] at (0,0) {};
\node[name=2] at (1,0) {};
\node[name=3] at (1/2,{sqrt(3)/2}) {};
\node[name=4,label=left:{$x$}] at (0,1) {};
\node[name=5] at (1,1) {};
\node[name=6,label=above:{$y$}] at (1/2,{1 + sqrt(3)/2}) {};
\node[name=7] at (1,3/2) {};
\node[name=8] at (-1/2,{1 + sqrt(3)/2}) {};
\draw (6) -- (4) -- (1) -- (2) -- (3) -- (6) -- (5) -- (2) (1) -- (3);
\draw (4) -- (7) -- (6) -- (8) -- (4);
\draw[dashed] (4) -- (5);
\end{tikzpicture}
\end{center}
Then $x$ and $y$ have three common unit-distance neighbors.  This would lead to a unit-distance embedding of the complete bipartite graph $K_{2,3}$, which we recall is the forbidden graph $F(5,6,1)$.  We have arrived at a contradiction, and so $F(8,12,1)$ is forbidden.

Similarly, assuming that any of the remaining graphs is unit-distance allows us to add a new unit-distance edge corresponding to a dashed edge from one of the totally unfaithful graphs from \cref{unfaithfulFigure}.  For $F(8,12,2)$ and $F(8,13,i)$ for $i \in \{3,4,5,7\}$, this results in two points with three common unit-distance neighbors, a contradiction.  For $F(8,13,6)$, this results in two points of distance 2 apart with two common neighbors, a contradiction.
\end{proof}

\newpage

We individually consider the remaining minimal forbidden graphs on 8 vertices.

\begin{lem}\label{n8-3}
	The following graph is forbidden:
\begin{center}
\begin{tikzpicture}
\node[label=left:{$(0,0)$},name=1] at (0,0) {};
\node[label=left:{$x$},name=2] at (0,1) {};
\node[name=3] at (1/2,{sqrt(3)/2}) {};
\node[name=4] at (1/2,{sqrt(3)/2 + 1}) {};
\node[name=5] at (3/2,{sqrt(3)/2}) {};
\node[name=6] at (3/2,{sqrt(3)/2 + 1}) {};
\node[label=right:{$y$ \phantom{,0)}},name=7] at (2,0) {};
\node[label=right:{$z$},name=8] at (2,1) {};
\draw (1) -- (2) -- (4) -- (3) -- (1);
\draw (3) -- (5) -- (7) -- (8) -- (6) -- (4);
\draw (5) -- (6);
\draw (1) -- (8);
\draw (2) -- (7);
\end{tikzpicture}

$F(8,12,3)$
\end{center}
\end{lem}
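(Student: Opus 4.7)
The plan is to exploit the three rhombi of $F(8,12,3)$ that chain together across the figure --- using \cref{triangleRhombusLemma} to extract a translational structure --- and then show that this structure is incompatible with the two ``cross'' unit edges $(0,0)\,z$ and $x\,y$.

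Label the eight vertices $v_1, \ldots, v_8$ matching the figure, so that $v_1 = (0,0)$, $v_2 = x$, $v_7 = y$, $v_8 = z$, and identify each vertex with its image under an embedding $\varphi$. Let $\vec{u} := v_2 - v_1$, a unit vector. I would observe that $F(8,12,3)$ contains three 4-cycles, namely $v_1 v_2 v_4 v_3$, $v_3 v_5 v_6 v_4$, and $v_5 v_7 v_8 v_6$, chained along the shared edges $v_3 v_4$ and $v_5 v_6$. By \cref{triangleRhombusLemma}(ii), each such 4-cycle has parallel opposite edges. I would then rule out the ``crossed 4-cycle'' configuration: a crossed 4-cycle with all four sides of unit length would force two opposite vertices to coincide at the unique tangency point of two unit circles whose centers lie at distance $2$, contradicting the injectivity of $\varphi$. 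Each of the three 4-cycles is therefore a parallelogram, and chaining the resulting translations gives
\[
    v_4 - v_3 \;=\; v_6 - v_5 \;=\; v_8 - v_7 \;=\; \vec{u}.
\]

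Next, I would bring in the remaining unit edges $v_1 v_8$ and $v_2 v_7$. A direct computation yields
\[
    v_2 - v_7 \;=\; (v_2 - v_1) + (v_1 - v_8) + (v_8 - v_7) \;=\; 2\vec{u} + (v_1 - v_8),
\]
so $(v_2 - v_7) - (v_1 - v_8) = 2\vec{u}$ has magnitude $2$. Since $v_2 - v_7$ and $v_1 - v_8$ are both unit vectors, the triangle inequality $|(v_2 - v_7) - (v_1 - v_8)| \le 2$ is saturated, which forces $v_2 - v_7 = -(v_1 - v_8)$ and hence $v_1 - v_8 = -\vec{u}$. But then $v_8 = v_1 + \vec{u} = v_2$, contradicting the injectivity of $\varphi$.

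The main obstacle is a small subtlety in the first step: \cref{triangleRhombusLemma} only asserts parallelism of opposite edges, which a priori admits both parallelogram and crossed-quadrilateral configurations, so one must justify ruling out the latter. Once this is settled --- by the short injectivity argument sketched above --- the remainder of the proof is simply a saturated triangle inequality.
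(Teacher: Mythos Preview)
Your proof is correct and follows essentially the same idea as the paper's: chain the three rhombi $v_1v_2v_4v_3$, $v_3v_4v_6v_5$, $v_5v_6v_8v_7$ to get $v_8-v_7=v_2-v_1$, then derive a contradiction from the two cross edges. The only difference is in the final step. You treat the edges $v_1v_8$ and $v_2v_7$ as separate unit constraints and finish with a saturated triangle inequality, whereas the paper observes that $v_1v_2$, $v_2v_7$, $v_7v_8$, $v_8v_1$ are all edges, so $v_1v_2v_7v_8$ is itself a fourth 4-cycle; one more application of \cref{triangleRhombusLemma}(ii) gives $v_2-v_1=v_7-v_8$, and combining with $v_2-v_1=v_8-v_7$ from the chain forces $v_2=v_1$. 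Your triangle-inequality argument is effectively re-proving the rhombus lemma for this fourth cycle by hand. On the other hand, your explicit treatment of the crossed-quadrilateral case is a point the paper glosses over, so your version is more careful there.
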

\begin{proof}
Suppose, for a contradiction, that $F(8,12,3)$ were unit-distance.  Without loss of generality, we may fix one vertex at the origin and consider an embedding labeled as above.
Applying \cref{triangleRhombusLemma} to the rhombus through $\{(0,0), x, y, z\}$ demonstrates that $x = y - z$.  On the other hand, repeatedly applying \cref{triangleRhombusLemma}(ii) to the remaining three rhombi indicates that $x = z - y$.  This is only possible if $x = (0,0)$, so no such embedding is possible.
\end{proof}

\begin{lem}\label{n8-4}
The following graph is forbidden:

\begin{center}
\begin{tikzpicture}
\node[name=1] at (0,0) {};
\node[name=2] at (1,0) {};
\node[name=3] at (1/2,{sqrt(3)/2}) {};
\node[name=4] at (3/2,{sqrt(3)/2}) {};
\node[name=5] at (-1/2,{sqrt(3)/2}) {};
\node[name=6] at ({sqrt(3)/2 - 1/2}, {1/2 + sqrt(3)/2}) {};
\node[name=7] at (-1/2,{1 + sqrt(3)/2}) {};
\node[name=8] at ({-1/2 + sqrt(3)/2},{3/2 + sqrt(3)/2}) {};
\draw (1) -- (2) -- (4) -- (3) -- (5) -- (1) -- (3) -- (2);
\draw (5) -- (6) -- (8) -- (7) -- (5);
\draw (6) -- (7) (8) -- (4);
\end{tikzpicture}

$F(8,13,8)$
\end{center}
\end{lem}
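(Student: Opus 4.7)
My approach is to combine rigidity of two subgraphs with injectivity of any hypothetical embedding, in the spirit of \cref{n8-1} but with an additional collision argument to close out the two candidate configurations.

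First, I would identify two rigid subgraphs. The subgraph induced on $\{1,2,3,4,5\}$ consists of the three triangles $1$-$2$-$3$, $1$-$3$-$5$, and $2$-$3$-$4$ all meeting at the central vertex $3$; repeatedly applying \cref{triangleRhombusLemma}(i) together with injectivity forces a unique embedding (up to rigid motion) matching the figure, with $|v_4 - v_5| = 2$. The subgraph induced on $\{5,6,7,8\}$ consists of the triangles $5$-$6$-$7$ and $6$-$7$-$8$ sharing edge $6$-$7$; since $v_5 \neq v_8$, these two equilateral triangles lie on opposite sides of edge $6$-$7$, so $5,6,8,7$ forms a rhombus with diagonal $v_6 v_7$ of length $1$, which pins down $|v_5 - v_8| = \sqrt{3}$.

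Now the remaining edge $\{4,8\}$ forces $|v_4 - v_8| = 1$, so $v_8$ lies on the intersection of the circle of radius $\sqrt{3}$ about $v_5$ with the circle of radius $1$ about $v_4$. Since these circles have centers distance $2$ apart, they intersect in exactly two points; I would compute these directly after fixing coordinates as in the figure. My expectation, and the key content to verify, is that one intersection point equals $v_2$, immediately contradicting injectivity, while in the other case, $v_6$ and $v_7$ — determined as the intersection of unit circles about the now-fixed $v_5$ and $v_8$ — force one of them to coincide with $v_3$, again contradicting injectivity.

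The main obstacle is simply carrying out the circle-circle intersections and checking that the coincidences land on existing vertices as claimed. This is routine linear/quadratic bookkeeping rather than a conceptual difficulty, but it is essential: if even one of the two candidate positions for $v_8$ avoided all collisions with $v_1, \ldots, v_5$ and moreover admitted a choice of $v_6, v_7$ distinct from the other vertices, the argument would fail and $F(8,13,8)$ would in fact be unit-distance.
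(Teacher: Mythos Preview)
Your proposal is correct and follows essentially the same approach as the paper: fix the rigid block $\{1,2,3,4,5\}$, use $|v_5-v_8|=\sqrt{3}$ from the second diamond, and intersect with $|v_4-v_8|=1$ to find the two candidates for $v_8$, one of which coincides with $v_2$. For the remaining candidate $v_8=(3/2,\sqrt{3}/2)$ (in the paper's coordinates with $v_5=(0,0)$, $v_3=(1,0)$, $v_4=(2,0)$), the paper closes slightly differently: it observes that $|v_8-v_3|=1$, so $v_3$ is a third common unit-distance neighbor of $v_5$ and $v_8$ alongside $v_6,v_7$, contradicting that $K_{2,3}$ is forbidden---this avoids explicitly solving for $v_6,v_7$, but is equivalent to your collision argument.
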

\begin{proof}
Observe that $F(8,13,8)$ contains a subgraph isomorphic to the following unit-distance subgraph:
\begin{center}
\begin{tikzpicture}
\node[name=1] at (0,0) {};
\node[name=2,label=right:{$(3/2,-\sqrt{3}/2)$}] at (1,0) {};
\node[name=3] at (1/2,{sqrt(3)/2}) {};
\node[label=right:{$(2,0)$},name=4] at (3/2,{sqrt(3)/2}) {};
\node[label=left:{$(0,0)$},name=5] at (-1/2,{sqrt(3)/2}) {};
\node[name=6] at ({sqrt(3)/2 - 1/2}, {1/2 + sqrt(3)/2}) {};
\node[name=7] at (-1/2,{1 + sqrt(3)/2}) {};
\node[label=right:{$x$},name=8] at ({-1/2 + sqrt(3)/2},{3/2 + sqrt(3)/2}) {};
\draw (1) -- (2) -- (4) -- (3) -- (5) -- (1) -- (3) -- (2);
\draw (5) -- (6) -- (8) -- (7) -- (5);
\draw (6) -- (7);
\end{tikzpicture}
\end{center}
If $F(8,13,8)$ were unit-distance, then we would be able to position $x$ to satisfy both $|x| = \sqrt{3}$ and $|x - (2,0)| = 1$.  The only two such points are $x = (3/2,-\sqrt{3}/2)$, which is already occupied, and $x = (3/2,\sqrt{3}/2)$.  However, if $x = (3/2,\sqrt{3}/2)$, then $x$ would be distance 1 from $(1,0)$, leading to three common unit-distance neighbors between $(0,0)$ and $x$, a contradiction.
\end{proof}

\newpage

\begin{lem}\label{n8-5}
The following graph is forbidden:
\begin{center}
\begin{tikzpicture}
\node[name=1] at (0,0) {};
\node[name=2] at (1,0) {};
\node[name=3] at (1/2,{sqrt(3)/2}) {};
\node[name=4] at (0,1) {};
\node[name=5] at (1,1) {};
\node[name=6] at (1/2,{1 + sqrt(3)/2}) {};
\node[name=7] at (2,0) {};
\node[name=8] at (3/2,{sqrt(3)/2}) {};
\draw (6) -- (4) -- (1) -- (2) -- (3) -- (6) -- (5) (1) -- (3);
\draw (4) -- (5);
\draw (8) -- (2) -- (7) -- (8) -- (3);
\draw (5) -- (7);
\end{tikzpicture}

$F(8,13,9)$
\end{center}
\end{lem}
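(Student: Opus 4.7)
The plan is to follow the pattern of \cref{n8-3,n8-4}: fix coordinates on a rigid substructure and then show the remaining edge constraints are inconsistent. Observe that $F(8,13,9)$ contains three equilateral triangles $\{1,2,3\}$, $\{2,3,8\}$, and $\{2,7,8\}$, each consecutive pair sharing an edge. Placing $1=(0,0)$, $2=(1,0)$, and $3=(1/2,\sqrt{3}/2)$ without loss of generality, \cref{triangleRhombusLemma}(i) together with injectivity of the embedding forces $8=(3/2,\sqrt{3}/2)$ (else $8=1$), and then $7=(2,0)$ (else $7=3$).

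Next I analyze the 4-cycle $1$-$4$-$6$-$3$. By \cref{triangleRhombusLemma}(ii), the vector from $4$ to $6$ is parallel to the vector from $1$ to $3$, so it equals $\pm(1/2,\sqrt{3}/2)$. Writing $4=(\cos\theta,\sin\theta)$, the minus-sign case combined with $|6-3|=1$ collapses to $\cos\theta+\sqrt{3}\sin\theta=2$, whose only solution is $\theta=\pi/3$, giving $4=3$ and contradicting injectivity. Hence $6=4+(1/2,\sqrt{3}/2)$, and one degree of freedom $\theta$ remains.

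The equilateral triangle $\{4,5,6\}$ then places $5$ at one of two points, obtained by reflecting across the line through $4$ and $6$: either $5=(\cos\theta+1,\sin\theta)$ or $5=(\cos\theta-1/2,\sin\theta+\sqrt{3}/2)$. The last edge constraint $|5-7|=1$ with $7=(2,0)$ becomes a trigonometric equation in each case. I expect the first choice to reduce to $\cos\theta=1/2$, where $\theta=\pi/3$ gives $4=3$ and $\theta=-\pi/3$ gives $6=2$, each violating injectivity. The second choice should reduce to $5\cos\theta-\sqrt{3}\sin\theta=7$, which has no real solution because the left side is bounded above in absolute value by $\sqrt{25+3}=\sqrt{28}<7$.

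The main obstacle is the case analysis in the final step: both reflections of $5$ across line $46$ must be ruled out, each producing either a coincidence of two vertices or an unattainable amplitude. None of the computations is individually subtle, but the bookkeeping must track all orientation choices (two for the rhombus, two for the equilateral triangle $\{4,5,6\}$) and verify that every sub-case fails.
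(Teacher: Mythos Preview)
Your proof is correct and follows essentially the same approach as the paper: fix the rigid chain of triangles on $\{1,2,3,8,7\}$, then split into two cases for the position of $5$ and show that $|5-7|=1$ is impossible in each. The paper phrases the two cases via the preliminary observation that either $|5-2|=1$ or $|5-(-1/2,\sqrt3/2)|=1$, whereas you parameterize $4=(\cos\theta,\sin\theta)$ and compute directly; the resulting contradictions (vertex coincidences in one case, an unattainable amplitude in the other) coincide with the paper's.
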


\begin{proof}
Fixing some coordinates for a unit-distance subgraph of $F(8,13,9)$, we consider the following two classes of embeddings:
\begin{center}
\begin{tikzpicture}
\node[label=left:{$(0,0)$},name=1] at (0,0) {};
\node[name=2] at (1,0) {};
\node[name=3] at (1/2,{sqrt(3)/2}) {};
\node[name=4] at (0,1) {};
\node[name=5,label=above right:{$x$}] at (1,1) {};
\node[name=6] at (1/2,{1 + sqrt(3)/2}) {};
\node[name=7,label=right:{$(2,0)$}] at (2,0) {};
\node[name=8] at (3/2,{sqrt(3)/2}) {};
\draw (6) -- (4) -- (1) -- (2) -- (3) -- (6) -- (5) (1) -- (3);
\draw (4) -- (5);
\draw (8) -- (2) -- (7) -- (8) -- (3);
\draw[dashed] (5) -- (2);
\end{tikzpicture}
\hspace{1em}
\begin{tikzpicture}
\node[label=left:{$(0,0)$},name=1] at (0,0) {};
\node[name=2] at (1,0) {};
\node[name=3] at (1/2,{sqrt(3)/2}) {};
\node[name=4] at (0,1) {};
\node[name=5,label=left:{$x'$}] at (-1/2,{1 + sqrt(3)/2}) {};
\node[name=6] at (1/2,{1 + sqrt(3)/2}) {};
\node[name=7,label=right:{$(2,0)$}] at (2,0) {};
\node[name=8] at (3/2,{sqrt(3)/2}) {};
\node[name=9,label=left:{$(-1/2,\sqrt{3}/2)$}] at (-1/2,{sqrt(3)/2}) {};
\node[fill=white,minimum size=0.4em] at (-1/2,{sqrt(3)/2}) {};
\draw (6) -- (4) -- (1) -- (2) -- (3) -- (6) -- (5) (1) -- (3);
\draw (4) -- (5);
\draw (8) -- (2) -- (7) -- (8) -- (3);
\draw[dashed] (1) -- (9) -- (5) (3) -- (9);
\end{tikzpicture}
\end{center}
$F(8,13,9)$ is unit-distance only if we can arrange for either $x$ or $x'$ to be distance 1 from $(2,0)$.  Since we have $|x' - (-1/2,\sqrt{3}/2)| = 1$, we see that $|x' - (2,0)| > 1$.  Suppose, for a contradiction, that $|x - (2,0)| = 1$.  Together with $|x - (1,0)| = 1$, we see that the only two possibilities are $x = (3/2,\sqrt{3}/2)$, which is occupied, and $x = (3/2,-\sqrt{3}/2)$.  The latter would force $(1/2,\sqrt{3}/2)$ and $(3/2,-\sqrt{3}/2)$, two points of distance 2 apart, to have two common unit-distance neighbors; this leads to a contradiction.
\end{proof}

\begin{lem}\label{n8-6}
The following graph is forbidden:
\begin{center}
\begin{tikzpicture}
\node[name=1] at (0,0) {};
\node[name=2] at (1,0) {};
\node[name=3] at (1/2,{sqrt(3)/2}) {};
\node[name=4] at (3/2,{sqrt(3)/2}) {};
\node[name=5] at (1/2,{sqrt(3)/2 + 1}) {};
\node[name=6] at (3/2,{sqrt(3)/2 + 1}) {};
\node[name=7] at (1,1) {};
\node[name=8] at (2,1) {};
\draw (1) -- (2) -- (4) -- (3) -- (2);
\draw (1) -- (3) -- (5) -- (6) -- (4);
\draw (5) -- (7) -- (8) -- (6) -- (7);
\draw (1) -- (8);
\end{tikzpicture}

$F(8,13,10)$
\end{center}
\end{lem}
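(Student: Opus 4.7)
The plan is to leverage the rigid subgraphs of $F(8,13,10)$ to pin down coordinates for most vertices and then show the remaining edge $(1,8)$ cannot close. Observe that the graph contains three rhombi $\{1,2,4,3\}$, $\{3,5,6,4\}$, and $\{5,6,8,7\}$. The first and third are each subdivided into two equilateral triangles by the edges $(2,3)$ and $(6,7)$ respectively, so they are rigid. I would fix $1 = (0,0)$, $2 = (1,0)$, $3 = (1/2, \sqrt{3}/2)$, and $4 = (3/2, \sqrt{3}/2)$, which is without loss of generality.

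Next, applying \cref{triangleRhombusLemma}(ii) to the middle rhombus $\{3,5,6,4\}$, I conclude that $v := 5 - 3 = 6 - 4$ is a unit vector, and parameterize $v = (\cos\beta, \sin\beta)$. Then $5 = (1/2 + \cos\beta, \sqrt{3}/2 + \sin\beta)$ and $6 = (3/2 + \cos\beta, \sqrt{3}/2 + \sin\beta)$. The vertex $7$ is the apex of an equilateral triangle over the horizontal unit segment $(5,6)$, giving two choices differing by reflection. The vertex $8$ must be the other apex of the equilateral triangle over $(6,7)$; since $5$ already lies at unit distance from both $6$ and $7$ but cannot coincide with $8$, this forces $8 = 6 + 7 - 5 = (2 + \cos\beta, \sqrt{3}/2 + \sin\beta \pm \sqrt{3}/2)$, with sign inherited from the choice of $7$.

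Finally, I would impose the last edge constraint $|1 - 8| = 1$, producing one equation per choice of $7$. In the ``$+$'' case, expansion and the identity $\cos^2\beta + \sin^2\beta = 1$ give $4\cos\beta + 2\sqrt{3}\sin\beta = -7$, which is impossible since the left side has modulus at most $\sqrt{16 + 12} = 2\sqrt{7} < 7$. In the ``$-$'' case, the equation simplifies to $\cos\beta = -1$, so $v = (-1, 0)$, forcing $6 = (1/2, \sqrt{3}/2) = 3$ in violation of the injectivity of the embedding. Either way no embedding exists, so $F(8,13,10)$ is forbidden.

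The main obstacle is really just bookkeeping the sign choices across the three rhombi and making sure no extra embeddings have been missed; once this is set up, the two distance equations close by a short norm bound in one case and a vertex collision in the other.
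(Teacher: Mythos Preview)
Your proof is correct and follows essentially the same strategy as the paper: fix the rigid diamond $\{1,2,3,4\}$, propagate through the rhombus $\{3,5,6,4\}$ via a free unit vector, split into two cases according to which side of the segment $56$ the vertex $7$ lies, and rule out each case. The paper phrases the two cases geometrically---in one case $8$ is forced to lie at distance $1$ from the phantom point $(2,0)$, making $(0,0)$ and $(2,0)$ share two common unit-distance neighbours, and in the other $8$ is forced to lie at distance $1$ from $(2,\sqrt{3})$, which is too far from the origin---whereas you carry out the same split algebraically via the angle $\beta$ and reach the identical contradictions (your ``$-$'' case is the paper's first picture, your ``$+$'' case the second).
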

\begin{proof}
Fixing some coordinates for a unit-distance subgraph of $F(8,13,10)$, we consider the following two classes of embeddings:

\begin{center}
\begin{tikzpicture}
\node[label=left:{$(0,0)$},name=1] at (0,0) {};
\node[name=2] at (1,0) {};
\node[name=3] at (1/2,{sqrt(3)/2}) {};
\node[name=4] at (3/2,{sqrt(3)/2}) {};
\node[name=5] at (1/2,{sqrt(3)/2 + 1}) {};
\node[name=6] at (3/2,{sqrt(3)/2 + 1}) {};
\node[name=7] at (1,1) {};
\node[name=8, label=right:{$x$}] at (2,1) {};
\draw (1) -- (2) -- (4) -- (3) -- (2);
\draw (1) -- (3) -- (5) -- (6) -- (4);
\draw (5) -- (7) -- (8) -- (6) -- (7);
\node at (2,0) {};
\node[fill=white,minimum size=0.4em,name=9,label=right:{$(2,0)$}] at (2,0) {};
\draw[dashed] (7) -- (2) -- (9) -- (8);
\draw[dashed] (4) -- (9);
\end{tikzpicture}
\hspace{1em}
\begin{tikzpicture}
\node[label=left:{$(0,0)$},name=1] at (0,0) {};
\node[name=2] at (1,0) {};
\node[name=3] at (1/2,{sqrt(3)/2}) {};
\node[name=4] at (3/2,{sqrt(3)/2}) {};
\node[name=5] at (1/2,{sqrt(3)/2 + 1}) {};
\node[name=6] at (3/2,{sqrt(3)/2 + 1}) {};
\node[name=7] at (1,{1 + sqrt(3)}) {};
\node[label=right:{$x'$},name=8] at (2,{1 + sqrt(3)}) {};
\node at (2,{sqrt(3)}) {};
\node at (1,{sqrt(3)}) {};
\node[fill=white,minimum size=0.4em,name=9,label=right:{$(2,\sqrt{3})$}] at (2,{sqrt(3)}) {};
\node[fill=white,minimum size=0.4em,name=10] at (1,{sqrt(3)}) {};
\draw (1) -- (2) -- (4) -- (3) -- (2);
\draw (1) -- (3) -- (5) -- (6) -- (4);
\draw (5) -- (7) -- (8) -- (6) -- (7);
\draw[dashed] (3) -- (10) -- (4) -- (9) -- (10);
\draw[dashed] (10) -- (7);
\draw[dashed] (9) -- (8);
\end{tikzpicture}
\end{center}
Observe that $F(8,13,10)$ is unit-distance only if we can arrange for either $|x| = 1$ or $|x'| = 1$.  Since we have $|x' - (2,\sqrt{3})| = 1$, we see that $|x'| > 1$.  Suppose, for a contradiction, that $|x| = 1$.  This would force $(0,0)$ and $(2,0)$, two points of distance 2 apart, to have two common unit-distance neighbors, a contradiction.
\end{proof}

We are now in a position to classify the forbidden graphs on 8 vertices.

\begin{thm}\label{n8theorem}
The set of minimal forbidden graphs on 8 vertices is given by
\[
\mathcal{F}_8 := \{F(8,12,i) : 1 \leq i \leq 3\} \cup \{F(8,13,i) : 1 \leq i \leq 10\}.
\]
\end{thm}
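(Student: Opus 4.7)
The plan is to complete the classification in two stages. \cref{n8-1,n8-2,n8-3,n8-4,n8-5,n8-6} have already shown that each of the 13 graphs listed in $\mathcal{F}_8$ is forbidden; what remains is to verify (i) that each is minimal and (ii) that no other 8-vertex graph is minimal forbidden.

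For (ii), the strategy is exhaustive enumeration via \texttt{nauty}. I would list all 8-vertex graphs up to isomorphism, discard those that contain a subgraph isomorphic to some element of $\mathcal{F}_{\leq 7} \cup \mathcal{F}_8$, and for each survivor exhibit an explicit unit-distance embedding (recorded in Appendix B and verified by \texttt{SageMath}). Establishing that every such survivor is unit-distance shows that any forbidden 8-vertex graph must contain an element of $\mathcal{F}_{\leq 7} \cup \mathcal{F}_8$ as a subgraph; in particular, no minimal forbidden graph on 8 vertices can lie outside $\mathcal{F}_8$.

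For (i), it suffices to verify that, for each $F \in \mathcal{F}_8$ and each edge $e \in E(F)$, the graph $F - e$ is unit-distance, since every proper subgraph of $F$ is itself a subgraph of some $F - e$. I would use \texttt{nauty} to check that $F - e$ contains no subgraph isomorphic to any element of $\mathcal{F}_{\leq 7} \cup \mathcal{F}_8$; invoking (ii) then gives that $F - e$ is unit-distance. Equivalently, one verifies directly that no $F \in \mathcal{F}_8$ properly contains another element of $\mathcal{F}_{\leq 7} \cup \mathcal{F}_8$ as a subgraph, which is a finite computation over edge orbits of $\mathrm{Aut}(F)$.

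The main obstacle is producing the many explicit embeddings needed for (ii). Most can be constructed by placing a rigid subgraph in natural coordinates and then freely positioning the remaining vertices to avoid unwanted incidences; a few may require small perturbations of near-triangular-lattice configurations in order to remain faithful to the prescribed edge set. The entire pipeline---graph enumeration with \texttt{nauty}, subgraph containment tests with \texttt{SageMath}, and numerical verification of each candidate embedding---is automated in the companion code~\cite{smallUDcode}, which makes the argument readily reproducible despite its computational nature.
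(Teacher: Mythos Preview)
Your proposal is correct and follows the same overall strategy as the paper, but the paper streamlines the computation in two ways worth noting. First, rather than enumerating all 8-vertex graphs, the paper observes that a graph is unit-distance if and only if each of its biconnected components is, and so restricts attention to the 7123 biconnected graphs on 8 vertices; only 366 of these turn out to be $\mathcal{F}_{\leq 8}$-free. Second, rather than producing a separate embedding for each survivor, the paper constructs a single 27-vertex embedded unit-distance graph $G_{27}$ (with exact coordinates recorded in Appendix~B) and verifies via subgraph search that every one of the 366 survivors occurs as a subgraph of $G_{27}$. This replaces hundreds of ad hoc embeddings with a single moderate-size certificate, and the same $G_{27}$ is later reused to dispatch most of the 9-vertex case. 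Your minimality check in~(i) matches the paper's: one simply verifies that no $F\in\mathcal{F}_8$ properly contains an element of $\mathcal{F}_{\leq 7}\cup\mathcal{F}_8$.
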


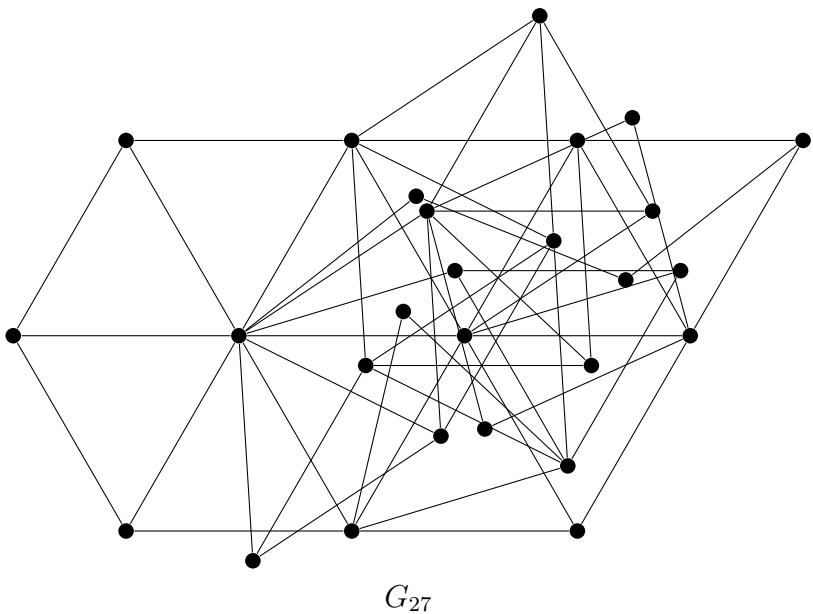
\begin{figure}[b!]
\begin{center}
\begin{tikzpicture}[scale=3]
\node[name=0] at (0, 0) {};
\node[name=1] at (-1/2, {1/2*sqrt(3)}) {};
\node[name=2] at (1/2, {1/2*sqrt(3)}) {};
\node[name=3] at (1, 0) {};
\node[name=4] at (1/2, {-1/2*sqrt(3)}) {};
\node[name=5] at  (-1/2, {-1/2*sqrt(3)}) {};
\node[name=6] at (-1, 0) {};
\node[name=7] at ({1/12*sqrt(33) + 1/12}, {-1/12*sqrt(11) + 1/12*sqrt(3)}) {};
\node[name=8] at ({1/12*sqrt(33) + 11/12}, {1/12*sqrt(11) + 1/12*sqrt(3)}) {};
\node[name=9] at ({1/6*sqrt(33) + 1/2}, {-1/3*sqrt(3)}) {};
\node[name=10] at ({1/6*sqrt(33)}, {1/6*sqrt(3)}) {};
\node[name=11] at (3/2, {1/2*sqrt(3)}) {};
\node[name=12] at ({1/12*sqrt(33) + 1/4}, {1/4*sqrt(11) - 5/12*sqrt(3)}) {};
\node[name=13] at ({1/6*sqrt(33) + 1}, {1/6*sqrt(3)}) {};
\node[name=14] at ({1/12*sqrt(33) - 5/12}, {-1/12*sqrt(11) - 5/12*sqrt(3)}) {};
\node[name=15] at (2, 0) {};
\node[name=16] at (5/6, {1/6*sqrt(11)}) {};
\node[name=17] at (11/6, {1/6*sqrt(11)}) {};
\node[name=18] at ({-1/60*sqrt(385) + 17/12}, {-7/60*sqrt(35) + 1/12*sqrt(11)}) {};
\node[name=19] at ({1/60*sqrt(385) + 17/12}, {7/60*sqrt(35) + 1/12*sqrt(11))}) {};
\node[name=20] at (5/2, {1/2*sqrt(3)}) {};
\node[name=21] at ({1/12*sqrt(11)*sqrt(3) + 13/12},{-1/12*sqrt(11) + 1/12*sqrt(3)}) {};
\node[name=22] at ({1/12*sqrt(11)*sqrt(3) + 5/12}, {1/12*sqrt(11) - 5/12*sqrt(3)}) {};
\node[name=23] at (3/2, {-1/2*sqrt(3)}) {};
\node[name=24] at (12/7, {1/7*sqrt(3)}) {};
\node[name=25] at (11/14, {5/14*sqrt(3)}) {};
\node[name=26] at (4/3, {1/6*sqrt(11) + 1/2*sqrt(3)}) {};
\foreach \x in {1,2,3,4,5,6,10,14,16,22,25}{
\draw (0) -- (\x);
}
\draw (1) -- (2);
\draw (1) -- (6);
\foreach \x in {3,7,8,11,26}{
\draw (2) -- (\x);
}
\foreach \x in {4,11,13,15,17,23}{
\draw (3) -- (\x);
}
\draw (4) -- (5);
\draw (4) -- (9);
\draw (4) -- (12);
\draw (4) -- (23);
\draw (5) -- (6);
\draw (8) -- (7) -- (9);
\draw (14) -- (7) -- (21);
\draw (8) -- (9);
\draw (22) -- (8) -- (26);
\draw (13) -- (10) -- (9) -- (12);
\draw (9) -- (13);
\draw (11) -- (15);
\draw (20) -- (11) -- (21);
\draw (14) -- (22);
\draw (18) -- (15) -- (19);
\draw (20) -- (15) -- (23);
\foreach \x in {17,18,19,21,22,26}{
\draw (16) -- (\x);
}
\draw (17) -- (26);
\draw (20) -- (24);
\draw (24) -- (25);
\end{tikzpicture}

$G_{27}$
\end{center}
\caption{The embedded unit-distance graph $G_{27}$ contains as a subgraph an isomorphic copy of every biconnected unit-distance graph on 8 vertices.  Exact coordinates for the vertices of $G_{27}$ are reported in \cref{g27table} in Appendix B.}\label{g27}
\end{figure}

\begin{proof}
Lemmas~\ref{n8-1} through \ref{n8-6} establish that every graph contained in $\mathcal{F}_8$ is forbidden.  Moreover, no graph in $\mathcal{F}_8$ contains a proper subgraph isomorphic to any graph in $\mathcal{F}_{\leq 7}$ or $\mathcal{F}_8$, so $\mathcal{F}_8$ contains only minimal forbidden graphs.  Set $
\mathcal{F}_{\leq 8} := \mathcal{F}_{\leq 7} \cup \mathcal{F}_8$.  Define a graph to be $\mathcal{F}_{\leq 8}$-free when it does not contain any element of $\mathcal{F}_{\leq 8}$ as a subgraph.  To finish the proof, we need only show that every $\mathcal{F}_{\leq 8}$-free graph on 8 vertices is unit-distance.  Of course, a disconnected graph is unit-distance if and only if each of its connected components is unit-distance.  Similarly, a connected graph is unit-distance if and only if each of its biconnected components is unit-distance; this was observed by Chilakamarri and Mahoney~\cite{chilakamarri95}.  Hence, we need only verify that every biconnected $\mathcal{F}_{\leq 8}$-free graph on 8 vertices is unit-distance.

We use \texttt{nauty} to generate the set of all 7123 biconnected graphs on 8 vertices.  \texttt{SageMath} computes that only 366 of these graphs are $\mathcal{F}_{\leq 8}$-free.  Moreover, each of these 366 are subgraphs of the embedded unit-distance graph $G_{27}$ portrayed in \cref{g27} with coordinates given in \cref{g27table} in Appendix B.  We constructed $G_{27}$ as follows.  First, we explicitly computed coordinates for several of the 366 graphs that we were attempting to embed.  Next, we focused on those graphs with a vertex of degree 2 that had not yet been embedded.  When we could successfully embed the subgraph without this degree-2 vertex into our set of already computed coordinates, we attempted to solve for new coordinates for this degree-2 vertex.  If we succeeded, we added this new vertex to our set of coordinates.  Iterating this procedure yielded a unit-distance graph which contained as a subgraph each of the 366 graphs being considered.  We finally produced $G_{27}$ by eliminating several unnecessary vertices.
\end{proof}

\section{Forbidden graphs on nine vertices}

As in the previous section, we begin by considering rigid and totally unfaithful subgraphs.

\begin{lem}\label{n9-first}
The following four graphs are forbidden:

\begin{center}
\begin{tabular}{cccc}
\begin{tikzpicture}
\node[name=1] at (0,0) {};
\node[name=2] at (1/2,{sqrt(3)/2}) {};
\node[name=3] at (1,0) {};
\node[name=4] at (-1/2,{sqrt(3)/2}) {};
\node[name=5] at (3/2,{sqrt(3)/2}) {};
\node[name=6] at (-1/2,{1 + sqrt(3)/2}) {};
\node[name=7] at (1/2,{1 + sqrt(3)/2}) {};
\node[name=8] at (3/2,{1 + sqrt(3)/2}) {};
\node[name=9] at (1/2,{3/2 + sqrt(3)/2}) {};
\draw (1) -- (2) -- (3) -- (1);
\draw (2) -- (5) -- (3);
\draw (1) -- (4) -- (2);
\draw (4) -- (6) -- (7) -- (2);
\draw (5) -- (8) -- (7);
\draw (6) -- (9) -- (8);
\end{tikzpicture}
&
\begin{tikzpicture}
\node[name=1] at (0,0) {};
\node[name=2] at (1/2,{sqrt(3)/2}) {};
\node[name=3] at (1,0) {};
\node[name=4] at (2,0) {};
\node[name=5] at (3/2,{sqrt(3)/2}) {};
\node[name=6] at (3,0) {};
\node[name=7] at (5/2,{sqrt(3)/2}) {};
\draw (1) -- (2) -- (3) -- (1);
\draw (2) -- (5) -- (3);
\draw (1) -- (4);
\draw (4) -- (5) -- (7) -- (4) -- (6) -- (7);
\node[name=8] at (1,-1/2) {};
\node[name=9] at (2,-1/2) {};
\draw (1) -- (8) -- (9) -- (6);
\end{tikzpicture}
&
\begin{tikzpicture}
\node[name=1] at (0,0) {};
\node[name=2] at (1/2,{sqrt(3)/2}) {};
\node[name=3] at (1,0) {};
\node[name=4] at (2,{sqrt(3)}) {};
\node[name=5] at (3/2,{sqrt(3)/2}) {};
\node[name=6] at (-1/2,{sqrt(3)/2}) {};
\node[name=7] at (5/2,{sqrt(3)/2}) {};
\node[name=8] at (1,{sqrt(3)}) {};
\node[name=9] at (2,0) {};
\draw (3) -- (1);
\draw (5) -- (3);
\draw (4) -- (5) -- (7) -- (4);
\draw (1) -- (6) -- (2) -- (1);
\draw (2) -- (8) -- (5) -- (2) -- (3);
\draw (8) -- (4);
\draw (7) -- (9) -- (6);
\end{tikzpicture}
&
\begin{tikzpicture}
\node[name=1] at (0,0) {};
\node[name=2] at (1/2,{sqrt(3)/2}) {};
\node[name=3] at (1,0) {};
\node[name=4] at (2,0) {};
\node[name=5] at (3/2,{sqrt(3)/2}) {};
\node[name=6] at (2,{sqrt(3)}) {};
\node[name=7] at (5/2,{sqrt(3)/2}) {};
\draw (1) -- (2) -- (3) -- (1);
\draw (2) -- (5) -- (3);
\draw (1) -- (4);
\draw (4) -- (5) -- (7) -- (4);
\node[name=8] at (3,{sqrt(3)}) {};
\node[name=9] at (3/2,1/4) {};
\draw (1) -- (9) -- (8);
\draw (6) -- (8) -- (7);
\draw (5) -- (6) -- (7);
\end{tikzpicture}\\
$F(9,14,1)$ & $F(9,14,2)$ & $F(9,15,1)$ & $F(9,15,2)$
\end{tabular}
\end{center}
\end{lem}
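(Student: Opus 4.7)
My plan is to apply the two techniques established in the preceding section: the rigid-subgraph argument of \cref{n8-1,n8-3,n8-4,n8-5,n8-6}, and the totally-unfaithful-subgraph argument of \cref{n8-2} that leans on \cref{unfaithfulFigure}. For each of the four graphs, I would locate a rigid core built from edge-sharing triangles and 4-cycles, fix coordinates for one triangle, and then use \cref{triangleRhombusLemma} to pin down positions for the rest of the core.

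The cleanest instance is $F(9,14,1)$. Its subgraph on $\{1,2,3,4,5\}$ is three edge-sharing triangles, so after placing $2=(0,0)$, $3=(1,0)$, and $1=(1/2,\sqrt{3}/2)$, distinctness of vertices forces $4=(-1/2,\sqrt{3}/2)$ and $5=(1/2,-\sqrt{3}/2)$. The graph contains the two 4-cycles $4$-$6$-$7$-$2$ and $5$-$8$-$7$-$2$, which share the edge $2$-$7$. By \cref{triangleRhombusLemma}(ii) each is a rhombus, so the parallelogram identity gives $6 = 4 + 7 - 2$ and $8 = 5 + 7 - 2$ in terms of $7$. Consequently $6 - 8 = 4 - 5$ has length exactly $2$, and the only point at distance $1$ from both $6$ and $8$ is their midpoint, which evaluates to $7$. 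Since $9$ must be distinct from $7$, no embedding exists. For the remaining three graphs I expect an analogous rigid core to force either a vertex collision, two required unit-neighbors at distance greater than $2$, or a hidden $K_{2,3}$; in $F(9,15,1)$ for instance the cluster of triangles $1$-$2$-$3$, $1$-$2$-$6$, $2$-$3$-$5$, $2$-$5$-$8$, $4$-$5$-$8$, and $4$-$5$-$7$ should pin down coordinates for $1$ through $8$, and a quick calculation suggests $6$ and $7$ land at distance exactly $3$, precluding their common unit-neighbor $9$. For $F(9,14,2)$ and $F(9,15,2)$ I would chase coordinates through the same kind of triangle chain and expect one of the three obstructions above to appear.

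The main obstacle is the reflection bookkeeping. Each time a new triangle or 4-cycle is appended, there are typically two orientations to consider, and we must confirm that every branch produces either a vertex collision or an infeasible unit-distance constraint. The parallelogram relation exploited in $F(9,14,1)$ eliminates this ambiguity cleanly at each 4-cycle, but for graphs requiring more subcases I would fall back if needed on locating one of the totally unfaithful subgraphs of \cref{unfaithfulFigure} and following the dashed-edge argument of \cref{n8-2} to surface a smaller forbidden configuration.
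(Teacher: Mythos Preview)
Your approach matches the paper's: each of the four graphs contains a rigid subgraph of edge-sharing triangles (plus, for $F(9,14,1)$, two rhombi), after which \cref{triangleRhombusLemma} forces two specified vertices to sit at distance at least $2$, leaving no room for the required extra common unit-neighbor(s). The reflection bookkeeping you flag never actually branches here---each appended triangle's new vertex is pinned by distinctness from an already-placed vertex---so the fallback to totally unfaithful subgraphs is unnecessary.
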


\begin{proof}
As in \cref{n8-1}, each of these is forbidden due to containing a subgraph isomorphic to a rigid unit-distance.  By applying \cref{triangleRhombusLemma}, we see that $F(9,14,1)$ is unit-distance if and only if two points of distance 2 apart share 2 common neighbors, which is impossible.  To see that $F(9,14,2)$ is forbidden, observe that there is a unique path along three unit-distance line segments between any two points of distance 3 apart.  To see that each of $F(9,15,1)$ and $F(9,15,2)$ is forbidden, recall that two points of distance greater than 2 apart share no common unit-distance neighbor.
\end{proof}

\newpage
\begin{lem}\label{n9-unfaithful}
The following 29 graphs are forbidden:

\begin{center}
\begin{tabular}{ccccc}
\begin{tikzpicture}
\node[name=1] at (0,0) {};
\node[name=2] at (1,0) {};
\node[name=3] at (1/2,{sqrt(3)/2}) {};
\node[name=4] at (0,1) {};
\node[name=5] at (1,1) {};
\node[name=6] at (1/2,{1 + sqrt(3)/2}) {};
\draw (6) -- (4) -- (1) -- (2) -- (3) -- (6) -- (5) -- (2) (1) -- (3);
\node[name=7] at (0,2.2) {};
\node[name=8] at (1,2.2) {};
\node[name=9] at (3/2,1.75) {};
\draw (4) -- (7) -- (8) -- (5);
\draw (5) -- (9) -- (7);
\end{tikzpicture}
&
\begin{tikzpicture}
\node[name=1] at (0,0) {};
\node[name=2] at (1,0) {};
\node[name=3] at (1/2,{sqrt(3)/2}) {};
\node[name=4] at (0,1) {};
\node[name=5] at (1,1) {};
\node[name=6] at (1/2,{1 + sqrt(3)/2}) {};
\draw (6) -- (4) -- (1) -- (2) -- (3) -- (6) -- (5) -- (2) (1) -- (3);
\node[name=7] at (3/2,{sqrt(3)/2}) {};
\node[name=8] at (3/2,{1 + sqrt(3)/2}) {};
\draw (3) -- (7) -- (2);
\draw (6) -- (8) -- (5);
\node[name=9] at (2,3/2) {};
\draw (6) -- (9) -- (7);
\end{tikzpicture}
&
\begin{tikzpicture}
\node[name=1] at (0,0) {};
\node[name=2] at (1,0) {};
\node[name=3] at (1/2,{sqrt(3)/2}) {};
\node[name=4] at (0,1) {};
\node[name=5] at (1,1) {};
\node[name=6] at (1/2,{1 + sqrt(3)/2}) {};
\draw (6) -- (4) -- (1) -- (2) -- (3) -- (6) -- (5) -- (2) (1) -- (3);
\node[name=7] at (1/2,1/4) {};
\node[name=8] at (3/2,{1 + sqrt(3)/2}) {};
\node[name=9] at (2,1) {};
\draw (6) -- (8) -- (9) -- (5);
\draw (4) -- (7) -- (5);
\draw (7) -- (9);
\end{tikzpicture}
&
\begin{tikzpicture}
\node[name=1] at (0,0) {};
\node[name=2] at (1,0) {};
\node[name=3] at (1/2,{sqrt(3)/2}) {};
\node[name=4] at (0,1) {};
\node[name=5] at (1,1) {};
\node[name=6] at (1/2,{1 + sqrt(3)/2}) {};
\draw (6) -- (4) -- (1) -- (2) -- (3) -- (6) -- (5) -- (2) (1) -- (3);
\node[name=7] at ({1 + sqrt(3)/2},1/2) {};
\node[name=8] at (1/2,1/4) {};
\node[name=9] at (1/2,-1/2) {};
\draw (2) -- (7) -- (5);
\draw (7) -- (8) -- (4);
\draw (2) -- (9) -- (8);
\end{tikzpicture}
&
\begin{tikzpicture}
\node[name=1] at (0,0) {};
\node[name=2] at (1,0) {};
\node[name=3] at (1/2,{sqrt(3)/2}) {};
\node[name=4] at (0,1) {};
\node[name=5] at (1,1) {};
\node[name=6] at (1/2,{1 + sqrt(3)/2}) {};
\draw (6) -- (4) -- (1) -- (2) -- (3) -- (6) -- (5) -- (2) (1) -- (3);
\node[name=7] at (3/2,{sqrt(3)/2}) {};
\node[name=8] at (3/2,{1 + sqrt(3)/2}) {};
\draw (2) -- (7) -- (3);
\draw (7) -- (8) -- (5);
\node[name=9] at (1/2,9/4) {};
\draw (4) -- (9) -- (8);
\end{tikzpicture}\\
$F(9,13,1)$ & $F(9,14,3)$ & $F(9,14,4)$ & $F(9,14,5)$ & $F(9,14,6)$\\
\\
\begin{tikzpicture}
\node[name=1] at (0,0) {};
\node[name=2] at (1,0) {};
\node[name=3] at (1/2,{sqrt(3)/2}) {};
\node[name=4] at (0,1) {};
\node[name=5] at (1,1) {};
\node[name=6] at (1/2,{1 + sqrt(3)/2}) {};
\draw (6) -- (4) -- (1) -- (2) -- (3) -- (6) -- (5) -- (2) (1) -- (3);
\node[name=7] at (3/2,{sqrt(3)/2}) {};
\node[name=8] at (3/2,{1 + sqrt(3)/2}) {};
\draw (2) -- (7) -- (3);
\draw (7) -- (8) -- (6);
\node[name=9] at (1/2,9/4) {};
\draw (4) -- (9) -- (8);
\end{tikzpicture}
&
\begin{tikzpicture}
\node[name=1] at (0,0) {};
\node[name=2] at (1,0) {};
\node[name=3] at (1/2,{sqrt(3)/2}) {};
\node[name=4] at (0,1) {};
\node[name=5] at (1,1) {};
\node[name=6] at (1/2,{1 + sqrt(3)/2}) {};
\draw (6) -- (4) -- (1) -- (2) -- (3) -- (6) -- (5) -- (2) (1) -- (3);
\node[name=7] at ({1 + sqrt(3)/2},1/2) {};
\node[name=8] at (5/4,-3/8) {};
\node[name=9] at (1/2,1/4) {};
\draw (5) -- (7) -- (2);
\draw (7) -- (8) -- (1);
\draw (7) -- (9) -- (4);
\end{tikzpicture}
&
\begin{tikzpicture}
\node[name=1] at (0,0) {};
\node[name=2] at (1,0) {};
\node[name=3] at (1/2,{sqrt(3)/2}) {};
\node[name=4] at (0,1) {};
\node[name=5] at (1,1) {};
\node[name=6] at (1/2,{1 + sqrt(3)/2}) {};
\draw (6) -- (4) -- (1) -- (2) -- (3) -- (6) -- (5) -- (2) (1) -- (3);
\node[name=7] at (3/2,{1 + sqrt(3)/2}) {};
\node[name=8] at (1,{1 + sqrt(3)}) {};
\node[name=9] at (1/8,9/4) {};
\draw (5) -- (7) -- (6) -- (8) -- (7);
\draw (8) -- (9) -- (4);
\end{tikzpicture}
&
\begin{tikzpicture}
\node[name=1] at (0,0) {};
\node[name=2] at (1,0) {};
\node[name=3] at (1/2,{sqrt(3)/2}) {};
\node[name=4] at (0,1) {};
\node[name=5] at (1,1) {};
\node[name=6] at (1/2,{1 + sqrt(3)/2}) {};
\draw (6) -- (4) -- (1) -- (2) -- (3) -- (6) -- (5) -- (2) (1) -- (3);
\node[name=7] at (3/2,{1 + sqrt(3)/2}) {};
\draw (5) -- (7) -- (6);
\node[name=8] at (-1/2,{1 + sqrt(3)/2}) {};
\node[name=9] at (1/2,5/2) {};
\draw (4) -- (8) -- (6);
\draw (8) -- (9) -- (7);
\end{tikzpicture}
&
\begin{tikzpicture}
\node[name=1] at (0,0) {};
\node[name=2] at (1,0) {};
\node[name=3] at (1/2,{sqrt(3)/2}) {};
\node[name=4] at (0,1) {};
\node[name=5] at (1,1) {};
\node[name=6] at (1/2,{1 + sqrt(3)/2}) {};
\draw (6) -- (4) -- (1) -- (2) -- (3) -- (6) -- (5) -- (2) (1) -- (3);
\node[name=7] at (3/2,{1 + sqrt(3)/2}) {};
\draw (5) -- (7) -- (6);
\node[name=8] at (3/2,{sqrt(3)/2}) {};
\draw (2) -- (8) -- (3);
\node[name=9] at (2,3/2) {};
\draw (5) -- (9) -- (8);
\end{tikzpicture}\\
$F(9,14,7)$ & $F(9,14,8)$ & $F(9,14,9)$ & $F(9,14,10)$ & $F(9,14,11)$\\
\\
\begin{tikzpicture}
\node[name=1] at (0,0) {};
\node[name=2] at (1,0) {};
\node[name=3] at (1/2,{sqrt(3)/2}) {};
\node[name=4] at (0,1) {};
\node[name=5] at (1,1) {};
\node[name=6] at (1/2,{1 + sqrt(3)/2}) {};
\draw (6) -- (4) -- (1) -- (2) -- (3) -- (6) -- (5) -- (2) (1) -- (3);
\node[name=7] at (3/2,{1 + sqrt(3)/2}) {};
\node[name=8] at (3/2,3/4) {};
\node[name=9] at (1/2,1/4) {};
\draw (5) -- (7) -- (6);
\draw (4) -- (9) -- (5);
\draw (9) -- (8) -- (7);
\end{tikzpicture}
&
\begin{tikzpicture}
\node[name=1] at (0,0) {};
\node[name=2] at (1,0) {};
\node[name=3] at (1/2,{sqrt(3)/2}) {};
\node[name=4] at (0,1) {};
\node[name=5] at (1,1) {};
\node[name=6] at (1/2,{1 + sqrt(3)/2}) {};
\draw (6) -- (4) -- (1) -- (2) -- (3) -- (6) -- (5) -- (2) (1) -- (3);
\node[name=7] at (2,-1/4) {};
\node[name=8] at (2,3/4) {};
\node[name=9] at ({5/4},1/4) {};
\draw (2) -- (7) -- (8) -- (5);
\draw (7) -- (9) -- (8) (9) -- (4);
\end{tikzpicture}
&
\begin{tikzpicture}
\node[name=1] at (0,0) {};
\node[name=2] at (1,0) {};
\node[name=3] at (1/2,{sqrt(3)/2}) {};
\node[name=4] at (0,1) {};
\node[name=5] at (1,1) {};
\node[name=6] at (1/2,{1 + sqrt(3)/2}) {};
\draw (6) -- (4) -- (1) -- (2) -- (3) -- (6) -- (5) -- (2) (1) -- (3);
\node[name=7] at ({1 + sqrt(3)/2},1/2) {};
\node[name=8] at (-1/2,1/2) {};
\node[name=9] at (0,3/2) {};
\draw (2) -- (7) -- (5);
\draw (7) -- (8) -- (1);
\draw (5) -- (9) -- (8);
\end{tikzpicture}
&
\begin{tikzpicture}
\node[name=1] at (0,0) {};
\node[name=2] at (1,0) {};
\node[name=3] at (1/2,{sqrt(3)/2}) {};
\node[name=4] at (0,1) {};
\node[name=5] at (1,1) {};
\node[name=6] at (1/2,{1 + sqrt(3)/2}) {};
\draw (6) -- (4) -- (1) -- (2) -- (3) -- (6) -- (5) -- (2) (1) -- (3);
\node[name=7] at (3/2,{1 + sqrt(3)/2}) {};
\node[name=8] at (2,1) {};
\node[name=9] at (1/2,1/4) {};
\draw (6) -- (7) -- (8) -- (5) -- (7);
\draw (4) -- (9) -- (8);
\end{tikzpicture}
&
\begin{tikzpicture}
\node[name=1] at (0,0) {};
\node[name=2] at (1,0) {};
\node[name=3] at (1/2,{sqrt(3)/2}) {};
\node[name=4] at (3/2,{sqrt(3)/2}) {};
\node[name=5] at (0,1) {};
\node[name=6] at (1,1) {};
\node[name=7] at (1/2,{sqrt(3)/2 + 1}) {};
\node[name=8] at (3/2,{sqrt(3)/2 + 1}) {};
\draw (3) -- (1) -- (2) -- (4) -- (3) -- (2);
\draw (7) -- (5) -- (6) -- (8) -- (7);
\draw (1) -- (5);
\draw (3) -- (7);
\draw (4) -- (8);
\node[name=9] at (3/2,0) {};
\draw (3) -- (9) -- (6);
\end{tikzpicture}\\
$F(9,14,12)$ & $F(9,14,13)$ & $F(9,14,14)$ & $F(9,14,15)$ & $F(9,14,16)$\\
\\ 

\begin{tikzpicture}
\node[name=1] at (0,0) {};
\node[name=2] at (1,0) {};
\node[name=3] at (1/2,{sqrt(3)/2}) {};
\node[name=4] at (0,1) {};
\node[name=5] at (1,1) {};
\node[name=6] at (1/2,{1 + sqrt(3)/2}) {};
\draw (6) -- (4) -- (1) -- (2) -- (3) -- (6) -- (5) -- (2) (1) -- (3);
\node[name=7] at (3/2,{sqrt(3)/2}) {};
\draw (2) -- (7) -- (3);
\node[name=8] at (2,{3/2}) {};
\node[name=9] at (1,2) {};
\draw (7) -- (8) -- (4);
\draw (8) -- (9) -- (4);
\draw (9) -- (5);
\end{tikzpicture}
&
\begin{tikzpicture}
\node[name=1] at (0,0) {};
\node[name=2] at (1,0) {};
\node[name=3] at (1/2,{sqrt(3)/2}) {};
\node[name=4] at (0,1) {};
\node[name=5] at (1,1) {};
\node[name=6] at (1/2,{1 + sqrt(3)/2}) {};
\draw (6) -- (4) -- (1) -- (2) -- (3) -- (6) -- (5) -- (2) (1) -- (3);
\node[name=7] at ({1 + sqrt(3)/2},1/2) {};
\node[name=8] at ({-sqrt(3)/2},1/2) {};
\node[name=9] at ({1 + sqrt(3)/2},-1/2) {};
\draw (9) -- (8);
\draw (1) -- (8) -- (4);
\draw (2) -- (7) -- (5);
\draw (2) -- (9) -- (7);
\end{tikzpicture}
&
\begin{tikzpicture}
\node[name=1] at (0,0) {};
\node[name=2] at (1,0) {};
\node[name=3] at (1/2,{sqrt(3)/2}) {};
\node[name=4] at (0,1) {};
\node[name=5] at (1,1) {};
\node[name=6] at (1/2,{1 + sqrt(3)/2}) {};
\draw (6) -- (4) -- (1) -- (2) -- (3) -- (6) -- (5) -- (2) (1) -- (3);
\node[name=7] at ({1 + sqrt(3)/2},1/2) {};
\node[name=8] at (3/2,{1 + sqrt(3)/2}) {};
\node[name=9] at (9/4,5/4) {};
\draw (5) -- (7) -- (2);
\draw (6) -- (8) -- (5) -- (9) -- (7);
\draw (9) -- (8);
\end{tikzpicture}
&
\begin{tikzpicture}
\node[name=1] at (0,0) {};
\node[name=2] at (1,0) {};
\node[name=3] at (1/2,{sqrt(3)/2}) {};
\node[name=4] at (0,1) {};
\node[name=5] at (1,1) {};
\node[name=6] at (1/2,{1 + sqrt(3)/2}) {};
\draw (6) -- (4) -- (1) -- (2) -- (3) -- (6) -- (5) -- (2) (1) -- (3);
\node[name=7] at (3/2,{sqrt(3)/2}) {};
\node[name=8] at (1,{sqrt(3)}) {};
\draw (2) -- (7) -- (3) -- (8) -- (7);
\node[name=9] at (2,11/8) {};
\draw (5) -- (9) -- (8);
\draw (9) -- (4);
\end{tikzpicture}
&
\begin{tikzpicture}
\node[name=1] at (0,0) {};
\node[name=2] at (1,0) {};
\node[name=3] at (1/2,{sqrt(3)/2}) {};
\node[name=4] at (0,1) {};
\node[name=5] at (1,1) {};
\node[name=6] at (1/2,{1 + sqrt(3)/2}) {};
\draw (6) -- (4) -- (1) -- (2) -- (3) -- (6) -- (5) -- (2) (1) -- (3);
\node[name=7] at (1/2,1/4) {};
\node[name=8] at (3/2,1/4) {};
\node[name=9] at (1,{1/4 - sqrt(3)/2}) {};
\draw (5) -- (8) -- (7) -- (5);
\draw (4) -- (7) -- (9) -- (8);
\draw (9) -- (3);
\end{tikzpicture}\\
$F(9,15,3)$ & $F(9,15,4)$ & $F(9,15,5)$ & $F(9,15,6)$ & $F(9,15,7)$\\
\\

\begin{tikzpicture}
\node[name=1] at (0,0) {};
\node[name=2] at (1,0) {};
\node[name=3] at (1/2,{sqrt(3)/2}) {};
\node[name=4] at (0,1) {};
\node[name=5] at (1,1) {};
\node[name=6] at (1/2,{1 + sqrt(3)/2}) {};
\draw (6) -- (4) -- (1) -- (2) -- (3) -- (6) -- (5) -- (2) (1) -- (3);
\node[name=7] at ({1 + sqrt(3)/2},1/2) {};
\node[name=8] at ({-sqrt(3)/2},1/2) {};
\node[name=9] at ({1 + sqrt(3)/2},3/2) {};
\draw (9) -- (8);
\draw (1) -- (8) -- (4);
\draw (2) -- (7) -- (5);
\draw (5) -- (9) -- (7);
\end{tikzpicture}
&
\begin{tikzpicture}
\node[name=1] at (0,0) {};
\node[name=2] at (1,0) {};
\node[name=3] at (1/2,{sqrt(3)/2}) {};
\node[name=4] at (3/2,{sqrt(3)/2}) {};
\node[name=5] at (-1/2,{sqrt(3)/2}) {};
\node[name=6] at (0,{sqrt(3)}) {};
\node[name=7] at (1,{sqrt(3)}) {};
\draw (5) -- (1) (3) -- (1) -- (2) -- (3) -- (5) -- (6) -- (7) -- (3) -- (6) (2) -- (4) -- (3);
\node[name=8] at (2,{sqrt(3)}) {};
\node[name=9] at (1,9/8) {};
\draw (6) -- (8) -- (4);
\draw (8) -- (9) -- (1);
\end{tikzpicture}
&
\begin{tikzpicture}
\node[name=1] at (0,0) {};
\node[name=2] at (1,0) {};
\node[name=3] at (1/2,{sqrt(3)/2}) {};
\node[name=4] at (3/2,{sqrt(3)/2}) {};
\node[name=5] at (-1/2,{sqrt(3)/2}) {};
\node[name=6] at (0,{sqrt(3)}) {};
\node[name=7] at (1,{sqrt(3)}) {};
\draw (5) -- (1) (3) -- (1) -- (2) -- (3) -- (5) -- (6) -- (7) -- (3) -- (6) (2) -- (4) -- (3);
\node[name=8] at (2,0) {};
\node[name=9] at (2,1) {};
\draw (2) -- (8) -- (4);
\draw (8) -- (9) -- (7);
\end{tikzpicture}
&
\begin{tikzpicture}
\node[name=1] at (0,0) {};
\node[name=2] at (1,0) {};
\node[name=3] at (1/2,{sqrt(3)/2}) {};
\node[name=4] at (3/2,{sqrt(3)/2}) {};
\node[name=5] at (-1/2,{sqrt(3)/2}) {};
\node[name=6] at (0,{sqrt(3)}) {};
\node[name=7] at (1,{sqrt(3)}) {};
\draw (5) -- (1) (3) -- (1) -- (2) -- (3) -- (5) -- (6) -- (7) -- (3) -- (6) (7) -- (4) -- (2);
\node[name=8] at (2,0) {};
\node[name=9] at (1,-1/2) {};
\draw (1) -- (9) -- (8);
\draw (2) -- (8) -- (4);
\end{tikzpicture}
&
\begin{tikzpicture}
\node[name=1] at (0,0) {};
\node[name=2] at (1,0) {};
\node[name=3] at (1/2,{sqrt(3)/2}) {};
\node[name=4] at (3/2,{sqrt(3)/2}) {};
\node[name=5] at (-1/2,{sqrt(3)/2}) {};
\node[name=6] at (0,{sqrt(3)}) {};
\node[name=7] at (1,{sqrt(3)}) {};
\draw (5) -- (1) (3) -- (1) -- (2) -- (3) -- (5) -- (6) -- (7) -- (3) -- (6) (7) -- (4) -- (2);
\node[name=8] at (2,{sqrt(3)}) {};
\node[name=9] at (1,1/2) {};
\draw (7) -- (8) -- (4) -- (9) -- (7);
\end{tikzpicture}\\
$F(9,15,8)$ & $F(9,15,9)$ & $F(9,15,10)$ & $F(9,15,11)$ & $F(9,15,12)$\\
\\

\begin{tikzpicture}
\node[name=1] at (0,0) {};
\node[name=2] at (1,0) {};
\node[name=3] at (1/2,{sqrt(3)/2}) {};
\node[name=4] at (3/2,{sqrt(3)/2}) {};
\node[name=5] at (-1/2,{sqrt(3)/2}) {};
\node[name=6] at (0,{sqrt(3)}) {};
\node[name=7] at (1,{sqrt(3)}) {};
\draw (5) -- (1) (3) -- (1) -- (2) -- (3) -- (5) -- (6) -- (7) -- (3) -- (6) (7) -- (4) -- (2);
\node[name=8] at (2,{sqrt(3)}) {};
\node[name=9] at (1,1/2) {};
\draw (7) -- (8) -- (4);
\draw (5) -- (9) -- (8);
\end{tikzpicture}
&
\begin{tikzpicture}
\node[name=1] at (0,0) {};
\node[name=2] at (1,0) {};
\node[name=3] at (1/2,{sqrt(3)/2}) {};
\node[name=4] at (3/2,{sqrt(3)/2}) {};
\node[name=5] at (-1/2,{sqrt(3)/2}) {};
\node[name=6] at (0,{sqrt(3)}) {};
\node[name=7] at (1,{sqrt(3)}) {};
\draw (5) -- (1) (3) -- (1) -- (2) -- (3) -- (5) -- (6) -- (7) -- (3) -- (6) (7) -- (4) -- (2);
\node[name=8] at (2,{sqrt(3)}) {};
\node[name=9] at (1,1/2) {};
\draw (7) -- (8) -- (4);
\draw (1) -- (9) -- (8);
\end{tikzpicture}
&
\begin{tikzpicture}
\node[name=1] at (0,0) {};
\node[name=2] at (1,0) {};
\node[name=3] at (1/2,{sqrt(3)/2}) {};
\node[name=4] at (3/2,{sqrt(3)/2}) {};
\node[name=5] at (-1/2,{sqrt(3)/2}) {};
\node[name=6] at (0,{sqrt(3)}) {};
\node[name=7] at (1,{sqrt(3)}) {};
\draw (5) -- (1) (3) -- (1) -- (2) -- (3) -- (5) -- (6) -- (7) -- (3) -- (6) (7) -- (4) -- (2);
\node[name=8] at (-1,0) {};
\draw (1) -- (8) -- (5);
\node[name=9] at (1/2,1/4) {};
\draw (8) -- (9) -- (4);
\end{tikzpicture}
&
\begin{tikzpicture}
\node[name=1] at (0,0) {};
\node[name=2] at (1,0) {};
\node[name=3] at (1/2,{sqrt(3)/2}) {};
\node[name=4] at (3/2,{sqrt(3)/2}) {};
\node[name=5] at (-1/2,{sqrt(3)/2}) {};
\node[name=6] at (0,{sqrt(3)}) {};
\node[name=7] at (1,{sqrt(3)}) {};
\draw (5) -- (1) (3) -- (1) -- (2) -- (3) -- (5) -- (6) -- (7) -- (3) -- (6) (7) -- (4) -- (2);
\node[name=8] at (2,{sqrt(3)}) {};
\node[name=9] at (2,1/2) {};
\draw (7) -- (8) -- (4);
\draw (3) -- (9) -- (8);
\end{tikzpicture}\\
$F(9,15,13)$ & $F(9,15,14)$ & $F(9,15,15)$ & $F(9,15,16)$
\end{tabular}
\end{center}
\end{lem}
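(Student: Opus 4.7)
The plan is to follow the template already used in the proof of \cref{n8-2}: for each of the 29 graphs $F$, exhibit a subgraph isomorphic to one of the four totally unfaithful graphs in \cref{unfaithfulFigure}. By the observation immediately following \cref{unfaithfulFigure}, adding the corresponding dashed edge to $F$ produces an augmented graph $F'$ which is unit-distance if and only if $F$ is. The task then reduces to showing that $F'$ is forbidden.

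Inspection of the figures shows that each of the 29 graphs has been drawn so that the totally unfaithful subgraph is visually transparent. In the majority of cases the six-vertex ``double triangle with rhombus'' configuration (the first graph in \cref{unfaithfulFigure}) sits as the six-vertex core occupying the lower-left of the picture; the remaining graphs are built on the three other unfaithful cores of \cref{unfaithfulFigure}, with $F(9,14,16)$ containing the eight-vertex double-square core and the $F(9,15,9)$--$F(9,15,16)$ family containing the seven-vertex cores. In each case I would read off the implied unit-distance pair of vertices and add the corresponding dashed edge to obtain $F'$.

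The next step is to locate a forbidden configuration inside $F'$. Two outcomes should cover every case, exactly as in \cref{n8-2}. Either (a) the two endpoints of some pair in $F'$ now share three common unit-distance neighbors, yielding a subgraph isomorphic to $K_{2,3} = F(5,6,1)$; or (b) applying \cref{triangleRhombusLemma} to a rigid part of $F'$ forces two vertices to lie at distance $2$ from each other while they still share two common unit-distance neighbors, which is impossible since two points at distance $2$ have a unique pair of common unit-distance neighbors. I would organize the 29 cases by which of the four unfaithful cores each graph contains and, within each group, by which of (a) or (b) produces the contradiction; for a small number of graphs one may have to iterate, adding a second dashed edge coming from a different unfaithful subgraph of $F'$ before the $K_{2,3}$ or the distance-$2$ obstruction emerges.

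The main obstacle is sheer tedium: no individual case is conceptually hard, but there are 29 of them, and the correct dashed extension plus forbidden witness must be produced for each. No extra geometric idea beyond \cref{triangleRhombusLemma} and the unfaithful extensions should be needed, so the argument in the paper can reasonably be compressed to a schematic description of the method together with, for each graph, a picture exhibiting the relevant subgraph and dashed edge, exactly as done for the seven graphs of \cref{n8-2}.
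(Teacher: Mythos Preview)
Your plan is essentially the paper's own argument: identify a totally unfaithful subgraph from \cref{unfaithfulFigure}, add the forced edge, and then find a known forbidden configuration in the augmented graph $F'$. The only real difference is in the final verification step. The paper does not restrict itself to your outcomes (a) and (b); instead it simply checks (via \texttt{SageMath}) that each $F'$ contains some member of $\mathcal{F}_{\leq 8}$, the full list of 19 minimal forbidden graphs on at most 8 vertices already established in \cref{n8theorem}. That is cleaner than your proposal, because several graphs in $\mathcal{F}_{\leq 8}$ (e.g.\ $F(8,12,3)$, $F(8,13,8)$--$F(8,13,10)$) are forbidden for reasons other than a $K_{2,3}$ or a distance-$2$ obstruction, so if one of these is the witness inside $F'$ your (a)/(b) dichotomy would not apply directly and your ``iterate with a second dashed edge'' fallback is not guaranteed to terminate in one of (a) or (b) either. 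Invoking $\mathcal{F}_{\leq 8}$ wholesale avoids having to re-derive those arguments inside each of the 29 cases.
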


\newpage
\begin{proof}
As in \cref{n8-2}, each of these is forbidden due to containing a subgraph isomorphic to one of the totally unfaithful graphs depicted in \cref{unfaithfulFigure}. For each graph, we include the additional edge that must appear in every embedding of its totally unfaithful subgraph.  We then use \texttt{SageMath} to verify that the resulting graph contains one of the graphs from $\mathcal{F}_{\leq 8}$ already known to be forbidden.
\end{proof}

\begin{lem}
The following two graphs are forbidden:
\begin{center}
\begin{tabular}{cc}
\begin{tikzpicture}
\node[name=1] at (0,0) {};
\node[name=2] at (1,0) {};
\node[name=3] at (1,1) {};
\node[name=4] at (0,1) {};
\node[name=5] at (1/2,{1 + sqrt(3)/2}) {};
\node[name=6] at (3/2,{1 + sqrt(3)/2}) {};
\node[name=7,label=right:{$x$}] at (3/2,{sqrt(3)/2}) {};
\node[name=8] at (-1,1/2) {};
\node[name=9,label=left:{$y$}] at (0,-1/2) {};
\draw (1) -- (2) -- (3) -- (4) -- (1) -- (8) -- (5) -- (6) -- (7) -- (2);
\draw (3) -- (6) (4) -- (5) (2) -- (9) -- (8);
\end{tikzpicture}
&
\begin{tikzpicture}
\node[name=1,label=left:{$x$}] at (0,0) {};
\node[name=2,label=right:{$y$}] at (1,0) {};
\node[name=3] at (1/2,{sqrt(3)/2}) {};
\node[name=4] at (3/2,{sqrt(3)/2}) {};
\node[name=5] at (1,{sqrt(3)}) {};
\node[name=6] at (0,{sqrt(3)}) {};
\node[name=7] at (-1/2,{sqrt(3)/2}) {};
\node[name=8] at (1,{3*sqrt(3)/2}) {};
\node[name=9] at (0,{3*sqrt(3)/2}) {};
\draw (1) -- (2) -- (3) -- (1) (2) -- (4) -- (5) -- (3) (1) -- (7) -- (6) -- (3) (4) -- (8) -- (9) -- (5) (7) -- (9) (6) -- (8);
\end{tikzpicture}\\
$F(9,13,2)$ & $F(9,14,17)$
\end{tabular}
\end{center}
\end{lem}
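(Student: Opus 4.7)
The plan is to show each of these graphs is forbidden by applying Lemma~\ref{triangleRhombusLemma}(ii) to a carefully chosen collection of 4-cycles and extracting a linear contradiction. I will use repeatedly the rhombus identity that, for any embedded 4-cycle $ABCD$, the parallelism of opposite edges translates to $A + C = B + D$.

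For $F(9,13,2)$, I label the nine vertices $v_1, \ldots, v_9$ following the figure, so that $v_7 = x$ and $v_9 = y$. Four 4-cycles are present that connect $x$ and $y$ through the intermediate vertices $v_3, v_4, v_5, v_6, v_8$: namely $v_2 v_3 v_6 v_7$, $v_3 v_4 v_5 v_6$, $v_1 v_4 v_5 v_8$, and $v_1 v_2 v_9 v_8$. Chaining the rhombus identity across these four cycles would give
\[
v_7 - v_2 \;=\; v_6 - v_3 \;=\; v_5 - v_4 \;=\; v_8 - v_1 \;=\; v_9 - v_2,
\]
so $v_7 = v_9$, i.e., $x = y$. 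This contradicts the injectivity of the embedding and shows $F(9,13,2)$ is forbidden.

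For $F(9,14,17)$, I label the vertices $v_1, \ldots, v_9$ with $v_1 = x$, $v_2 = y$, and $v_3$ the third vertex of the triangle on $x, y$. The graph contains four 4-cycles surrounding $v_9$: namely $v_2 v_3 v_5 v_4$, $v_1 v_3 v_6 v_7$, $v_4 v_5 v_9 v_8$, and $v_6 v_7 v_9 v_8$. The rhombus identity applied to each would give $v_5 = v_3 + v_4 - v_2$, $v_6 = v_3 + v_7 - v_1$, $v_9 = v_5 + v_8 - v_4$, and $v_9 = v_7 + v_8 - v_6$. Substituting the first into the third and the second into the fourth produces the two expressions $v_9 = v_3 + v_8 - v_2$ and $v_9 = v_1 + v_8 - v_3$. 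Equating and simplifying yields $2 v_3 = v_1 + v_2$, so that $v_3$ would be the midpoint of the unit edge $xy$. This would force $|v_1 - v_3| = 1/2$, contradicting that $v_1 v_3$ is an edge of length $1$.

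The main obstacle in each argument is pattern-matching the correct family of 4-cycles in the figure; once the family is identified, the derivation reduces to straightforward symbolic algebra using the rhombus identity, with no case split required.
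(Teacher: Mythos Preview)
Your argument is correct and follows the same strategy as the paper: chain the rhombus identity from \cref{triangleRhombusLemma}(ii) across the four $4$-cycles to force a degeneracy. For $F(9,13,2)$ your conclusion $x=y$ coincides with the paper's. For $F(9,14,17)$ the paper asserts that the two directed edges from the common neighbor $v_3$ to $x$ and to $y$ come out equal, hence $x=y$; your computation instead yields $v_2-v_3=v_3-v_1$, i.e.\ $v_3$ is the midpoint of $xy$, contradicting $|v_1-v_3|=1$. Tracing the four rhombi carefully, your version is what the chain actually produces (the two edge vectors are negatives, not equal), so your write-up is in fact the more accurate of the two; either way the graph is forbidden by the same mechanism.
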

\begin{proof}
Suppose, for a contradiction, that either graph were unit-distance and consider an embedding labeled as above. Repeatedly applying \cref{triangleRhombusLemma}(ii) shows that the directed edges from the common neighbor of $x$ and $y$ to each of $x$ and $y$ are equal as unit vectors.  In either graph, this implies $x = y$, so no such embedding is possible.
\end{proof}

\begin{lem}
The following graph is forbidden:
\begin{center}
\begin{tikzpicture}
\node[name=1] at (0,0) {};
\node[name=2] at (1,0) {};
\node[name=3] at (1/2,{sqrt(3)/2}) {};
\node[name=4] at (3/2,{sqrt(3)/2}) {};
\draw (1) -- (2) -- (4) -- (3) -- (1) (2) -- (3);
\node[name=5] at (0,5/4) {};
\node[name=6] at (1,5/4) {};
\node[name=7] at (3/2,{1 + sqrt(3)/2}) {};
\node[name=8] at (1/2,{sqrt(3)/4}) {};
\node[name=9] at (1/2,{1 + sqrt(3)/2}) {};
\draw (1) -- (5) -- (6) -- (2) (4) -- (7) -- (6);
\draw (7) -- (9) -- (1);
\draw (5) -- (8) -- (4);
\end{tikzpicture}

$F(9,14,18)$
\end{center}
\end{lem}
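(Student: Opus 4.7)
The plan is to assume $F(9,14,18)$ is unit-distance and derive a contradiction by coordinate chasing. Labeling the vertices $v_1,\ldots,v_9$ to match the figure, the subgraph on $\{v_1,v_2,v_3,v_4\}$ consists of two equilateral triangles sharing the edge $v_2v_3$ and is therefore rigid; I would fix $v_1 = (0,0)$, $v_2 = (1,0)$, $v_3 = (1/2,\sqrt{3}/2)$, $v_4 = (3/2,\sqrt{3}/2)$.

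Next I would parametrize $v_5 = (x,y)$ with $x^2+y^2=1$ and propagate the rigidity. The graph contains the two 4-cycles $v_1v_5v_6v_2$ and $v_2v_6v_7v_4$. Because every embedding is injective, each 4-cycle is non-degenerate, so by \cref{triangleRhombusLemma}(ii) each forms a genuine parallelogram. This yields $v_6 = v_5 + v_2 - v_1$ and then $v_7 = v_6 + v_4 - v_2 = v_5 + v_4$.

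Now $v_8$ and $v_9$ are common unit-distance neighbors of the pairs $(v_4,v_5)$ and $(v_1,v_7)$, respectively, so both $|v_5 - v_4|$ and $|v_1 - v_7| = |v_5 + v_4|$ must be at most $2$. The parallelogram identity
\[
|v_5 - v_4|^2 + |v_5 + v_4|^2 = 2|v_5|^2 + 2|v_4|^2 = 2 + 6 = 8
\]
then forces both distances to equal exactly $2$. Consequently $v_8 = (v_5 + v_4)/2$ and $v_9 = v_7/2$ are pinned down as midpoints, and $|v_5+v_4|^2 = 4$ reduces to $v_5 \cdot v_4 = 0$.

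Finally, combining $v_5 \cdot v_4 = 0$ with $|v_5|=1$ leaves exactly two candidates for $v_5$. I would check each separately: one choice makes $v_6$ coincide with $v_3$, while the other makes $v_9$ coincide with $v_2$, either of which violates injectivity. The main obstacle is recognizing that the two range constraints on $v_8$ and $v_9$ are complementary via the parallelogram law; once this identity is in play, the proof reduces to a short enumeration and the coincidence check.
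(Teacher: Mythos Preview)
Your argument is correct and essentially matches the paper's: both fix the rigid diamond $\{v_1,v_2,v_3,v_4\}$, use the two rhombi to write $v_7 = v_5 + v_4$, invoke the parallelogram law $|v_5-v_4|^2+|v_5+v_4|^2=8$ together with the $\leq 2$ constraints to force both distances to equal $2$, and then check the two resulting positions for a vertex collision. The only cosmetic difference is which coincidence is cited in the first branch (you note $v_6=v_3$, the paper notes $v_9=v_3$); both hold.
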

\begin{proof}
$F(9,14,18)$ contains a subgraph isomorphic to the following unit-distance graph:
\begin{center}
\begin{tikzpicture}
\node[name=1,label=left:{$(0,0)$}] at (0,0) {};
\node[name=2] at (1,0) {};
\node[name=3] at (1/2,{sqrt(3)/2}) {};
\node[name=4,label=right:{$(3/2,\sqrt{3}/2)$}] at (3/2,{sqrt(3)/2}) {};
\draw (1) -- (2) -- (4) -- (3) -- (1) (2) -- (3);
\node[name=5,label=left:{$x$}] at (0,1) {};
\node[name=6] at (1,1) {};
\node[name=7,label=right:{$y$}] at (3/2,{1 + sqrt(3)/2}) {};
\draw (1) -- (5) -- (6) -- (2) (4) -- (7) -- (6);
\end{tikzpicture}
\end{center}
By \cref{triangleRhombusLemma}(ii), $|x - y| = \sqrt{3}$.  For $F(9,14,18)$ to be unit-distance, there would need to be a common unit-distance neighbor $x'$ of $x$ and $(3/2,\sqrt{3}/2)$ and a common unit-distance neighbor $y'$ of $y$ and $(0,0)$ with both $x'$ and $y'$ distinct from the points already depicted.  Since the parallelogram between the four labeled points has side lengths $1$ and $\sqrt{3}$, the parallelogram law tells us that $|y|^2 + |x - (3/2,\sqrt{3})|^2 = 8$.  Together with the restrictions of $|y| \leq 2$ and $|x - (3/2,\sqrt{3})| \leq 2$ required for $x'$ and $y'$ to exist, we must have $|y| = 2$, forcing either $y = (2,0)$ or $y = (1,\sqrt{3})$.  In either case, $y$ has a unique common unit-distance neighbor with $(0,0)$ which is already depicted, so it is impossible to include the point $y'$.
\end{proof}

\newpage

\begin{lem}\label{collinearLemma}
The following graph is forbidden:
\begin{center}
\begin{tikzpicture}
\node[name=1] at (0,0) {};
\node[name=2] at (1,0) {};
\node[name=3] at (2,0) {};
\node[name=4] at (1/2, {sqrt(3)/2}) {};
\node[name=5] at (1/2, {-sqrt(3)/2}) {};
\node[name=6] at (3/2, {sqrt(3)/2}) {};
\node[name=7] at (3/2, {-sqrt(3)/2}) {};
\draw (1) -- (4) -- (2) -- (5) -- (1);
\draw (3) -- (6) -- (2) -- (7) -- (3);
\draw (4) -- (6) (5) -- (7);
\node[name=8] at (1,{sqrt(3)/4}) {};
\node[name=9] at (2,{sqrt(3)/4}) {};
\draw (1) -- (8) -- (3);
\draw (8) -- (9) -- (2);
\end{tikzpicture}

$F(9,14,19)$
\end{center}
\end{lem}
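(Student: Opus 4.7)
The plan is to suppose for contradiction that $F(9,14,19)$ admits an embedding, place vertex~$2$ at the origin, and show that vertex~$9$ is forced to coincide with one of $\{4,5,6,7\}$. First I would apply \cref{triangleRhombusLemma}(ii) to the rhombi on $(1,4,2,5)$ and $(2,6,3,7)$ to obtain the vector identities $5 = 1 - 4$ and $7 = 3 - 6$ (treating vertex labels as their planar positions, with $2 = \mathbf{0}$). Expanding $|5-7|^2 = 1$ as $|(1-3) - (4-6)|^2 = 1$ and using $|4-6|=1$ then yields the scalar identity $(4-6)\cdot(1-3) = |1-3|^2/2$.

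The crucial observation is that the points $P_1 := 4 + 7$ and $P_2 := 5 + 6$ are both at unit distance from each of $1$ and $3$. Indeed, $P_1 - 3 = 4 - 6$ has length $1$, and $P_1 - 1 = (4-6) + (3-1)$ has squared length $|4-6|^2 + 2(4-6)\cdot(3-1) + |3-1|^2 = 1$, where the cross term cancels against $|3-1|^2$ using the scalar identity above. An analogous computation (using $P_2 - 1 = -(4-6)$ and $P_2 - 3 = (1-3) - (4-6) = 5 - 7$) gives $|P_2 - 1| = |P_2 - 3| = 1$. Since vertex $8$ must be a common unit-distance neighbor of $1$ and $3$, and (generically) there are only two such points, this forces $8 \in \{P_1, P_2\}$.

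To conclude, I would use that $|4|=|5|=|6|=|7|=1$ (as $2$ is at the origin) to note $|P_1 - 4| = |7| = 1$ and $|P_1 - 7| = |4| = 1$, so $4$ and $7$ are common unit-distance neighbors of $2$ and $P_1$; likewise $5, 6$ are common unit-distance neighbors of $2$ and $P_2$. If $8 = P_1$, then vertex $9$ lies at one of the (at most two) intersection points of the unit circles around $2$ and $P_1$, and since $4 \neq 7$ in a valid embedding these intersections are exactly $\{4,7\}$; thus $9 \in \{4,7\}$, a contradiction. The case $8 = P_2$ is symmetric and yields $9 \in \{5,6\}$. The main obstacle is recognizing the identity $\{P_1, P_2\} = \{4+7,\, 5+6\}$; once this is spotted, every step is a short calculation using only the rhombus identities and the single scalar identity extracted from $|5-7|^2 = 1$.
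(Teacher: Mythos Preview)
Your argument is correct and takes a genuinely different, in fact more robust, route than the paper's. The paper asserts that in every embedding of the seven-vertex subgraph on $\{1,\dots,7\}$ the points $1,2,3$ are collinear, places them on the $x$-axis, and then solves a polynomial system for the positions of $8$ and $9$. You instead work coordinate-free: the rhombus identities single out $P_1=4+7$ and $P_2=5+6$ as the only possible positions for vertex~$8$, and the common unit-distance neighbours of $2$ and each $P_i$ are already occupied by two of $\{4,5,6,7\}$, forcing $9$ to coincide with an existing vertex. This is more than cosmetic, because the paper's collinearity assertion is actually false. Taking $2=0$, $4=i$, $5=1$, $6=e^{i\pi/6}$, $7=e^{-i\pi/3}$ gives a valid embedding of the seven-vertex subgraph (all ten edge constraints are satisfied and all seven vertices are distinct), yet $1=1+i$ and $3=e^{-i\pi/3}(1+i)$ subtend an angle of $\pi/3$ at $2$. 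Your argument covers this non-collinear family uniformly, so it closes a gap in the paper's proof. One small refinement worth adding: your qualifier ``generically'' can be removed by noting that $P_1=P_2$ forces $4-6=(1-3)/2$ and hence $|1-3|=2$; in that case the unique common neighbour $(1+3)/2$ of $1$ and $3$ equals $P_1=(4-6)+3$, so the conclusion $8\in\{P_1,P_2\}$ holds unconditionally.
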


\begin{proof}

Observe that $F(9,14,19)$ contains the following unit-distance graph:

\begin{center}
\begin{tikzpicture}
\node[label=left:{$x$},name=1] at (0,0) {};
\node[name=2,label=right:{$z$}] at (1,0) {};
\node[label=right:{$y$},name=3] at (2,0) {};
\node[name=4] at (1/2, {sqrt(3)/2}) {};
\node[name=5] at (1/2, {-sqrt(3)/2}) {};
\node[name=6] at (3/2, {sqrt(3)/2}) {};
\node[name=7] at (3/2, {-sqrt(3)/2}) {};
\draw (1) -- (4) -- (2) -- (5) -- (1);
\draw (3) -- (6) -- (2) -- (7) -- (3);
\draw (4) -- (6) (5) -- (7);
\end{tikzpicture}
\end{center}
It is not hard to see that in any embedding of this unit-distance graph, $x, y$ and $z$ are collinear; we fix an embedding with $x = (a,0)$ with $a < 0$, $z = (0,0)$, and $y = (b,0)$ with $b > 0$.  With basic trigonometry we can solve for
\[
a = b/2 - \sqrt{3 - 3b^2/4}.
\]
For $F(9,14,19)$ to be unit-distance, there must exist $x' = (x_1,x_2)$ and $y' = (y_1, y_2)$ with $|x' - x| = |x' - y| = 1$ and $|y' - x'| = |y'| = 1$ distinct from the points already depicted.   In particular, we require $y_1 \not \in \{a/2,b/2\}$, since the four corresponding unit-distance neighbors of the origin of the form $(a/2, \cdot)$ or $(b/2, \cdot)$ are already occupied.  With $a$ as above, we set up the system of equations
\begin{align*}
(x_1 - a)^2 + x_2^2 &= 1\\
(x_1 - b)^2 + x_2^2 &= 1\\
(y_1 - x_1)^2 + (y_2 - x_2)^2 &= 1\\
y_1^2 + y_2^2 &= 1
\end{align*}
The only solution to the system above with $y_1 \not \in \{a/2,b/2\}$ has both $a = -1$ and $b = 1$.  Hence, if $F(9,14,19)$ were unit-distance, then the points $(-1,0)$ and $(1,0)$ would have two unit-distance neighbors; a contradiction.
\end{proof}

\begin{lem}
The following graph is forbidden:
\begin{center}
\begin{tikzpicture}
\node[name=1] at (0,0) {};
\node[name=2] at (1,0) {};
\node[name=3] at (1/2,{sqrt(3)/2}) {};
\node[name=4] at (0,1) {};
\node[name=5] at (-1/2,{1 + sqrt(3)/2}) {};
\node[name=6] at (1/2,{1 + sqrt(3)/2}) {};
\node[name=7] at (2,0) {};
\node[name=8] at (3/2,{sqrt(3)/2}) {};
\node[name=9] at (5/2,{sqrt(3)/2}) {};
\draw (6) -- (4) -- (1) -- (2) -- (3) -- (6) -- (5) (1) -- (3);
\draw (4) -- (5);
\draw (8) -- (2) -- (7) -- (8) -- (3);
\draw (7) -- (9) -- (8);
\draw (5) -- (9);
\end{tikzpicture}

$F(9,15,17)$
\end{center}
\end{lem}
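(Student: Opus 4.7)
The plan is to identify a rigid subgraph containing six of the nine vertices, parametrize the remaining three, and then rule out each resulting configuration using the edge $5$--$9$. Since every $3$-cycle in an embedded unit-distance graph is equilateral by \cref{triangleRhombusLemma}(i), I will fix coordinates so that $1 = (0,0)$, $2 = (1,0)$, and $3 = (1/2, \sqrt{3}/2)$ without loss of generality. The chain of equilateral triangles $2$-$3$-$8$, $2$-$7$-$8$, $7$-$8$-$9$ then rigidly determines the remaining three vertices: at each step the new vertex has exactly two candidate positions (the two apices of an equilateral triangle on a known edge), and injectivity of the embedding rules out the one already occupied by an earlier vertex. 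This pins down $8 = (3/2, \sqrt{3}/2)$, $7 = (2, 0)$, and $9 = (5/2, \sqrt{3}/2)$.

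Next, I would parametrize $4 = (\cos\theta, \sin\theta)$ using its unit-distance edge to $1$. The $4$-cycle $1$-$3$-$6$-$4$ has all sides of unit length, so \cref{triangleRhombusLemma}(ii) forces $6 - 3 = 4 - 1$, yielding $6 = (\cos\theta + 1/2, \sin\theta + \sqrt{3}/2)$. The equilateral triangle $4$-$5$-$6$ then places $5$ at one of two apex positions, corresponding to the two sides of line $4$-$6$: either $5 = (\cos\theta - 1/2, \sin\theta + \sqrt{3}/2)$ (the position depicted in the figure above) or $5 = (\cos\theta + 1, \sin\theta)$.

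Finally, the edge $5$--$9$ requires $|5 - 9| = 1$, and I will check each apex case. In the first case, a direct computation gives $|5 - 9|^2 = (\cos\theta - 3)^2 + \sin^2\theta = 10 - 6\cos\theta \geq 4$, so $|5 - 9| \geq 2$, which is incompatible with $|5 - 9| = 1$. In the second case, the identity $5 - 2 = (\cos\theta, \sin\theta) = 4 - 1$ automatically forces $|5 - 2| = 1$; hence $5$ must lie at unit distance from both $2$ and $9$. However, the only two points in the plane at unit distance from both $2$ and $9$ are $7$ and $8$, each already occupied by a distinct vertex, contradicting the injectivity of the embedding.

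The main point of the proof is this last observation, which sidesteps solving the trigonometric equation $3\cos\theta + \sqrt{3}\sin\theta = 3$ directly by recognizing that the rigidly placed vertices $7$ and $8$ are the unique common unit-distance neighbors of $2$ and $9$.
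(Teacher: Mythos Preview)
Your proof is correct and follows essentially the same approach as the paper's. Both arguments fix the rigid chain of triangles on vertices $1,2,3,7,8,9$, split into the same two cases for the location of vertex~$5$ relative to the segment $4$--$6$, dispose of one case by a distance bound (your direct computation $|5-9|^2 = 10 - 6\cos\theta \ge 4$ is exactly the paper's observation that $x'$ lies within $1$ of $(-1/2,\sqrt3/2)$ and hence too far from $(5/2,\sqrt3/2)$), and dispose of the other by noting that $2$ and $9$ already have $7$ and $8$ as their only common unit-distance neighbours (the paper phrases this as a forbidden $K_{2,3}$).
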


\newpage

\begin{proof}
Fixing some coordinates for a unit-distance subgraph of $F(9,15,17)$, we consider the following two classes of embeddings:

\begin{center}
\begin{tikzpicture}
\node[name=1,label=left:{$(0,0)$}] at (0,0) {};
\node[name=2] at (1,0) {};
\node[name=3] at (1/2,{sqrt(3)/2}) {};
\node[name=4] at (0,1) {};
\node[name=5,label=above right:{$x$}] at (1,1) {};
\node[name=6] at (1/2,{1 + sqrt(3)/2}) {};
\node[name=7] at (2,0) {};
\node[name=8] at (3/2,{sqrt(3)/2}) {};
\node[name=9,label=right:{$(5/2,\sqrt{3}/2)$}] at (5/2,{sqrt(3)/2}) {};
\draw (6) -- (4) -- (1) -- (2) -- (3) -- (6) -- (5) (1) -- (3);
\draw (4) -- (5);
\draw (8) -- (2) -- (7) -- (8) -- (3);
\draw (7) -- (9) -- (8);
\draw[dashed] (2) -- (5);
\end{tikzpicture}
\hspace{1em}
\begin{tikzpicture}
\node[name=1] at (0,0) {};
\node[name=2] at (1,0) {};
\node[name=3] at (1/2,{sqrt(3)/2}) {};
\node[name=4] at (0,1) {};
\node[name=5,label=left:{$x'$}] at (-1/2,{1 + sqrt(3)/2}) {};
\node[name=6] at (1/2,{1 + sqrt(3)/2}) {};
\node[name=7] at (2,0) {};
\node[name=8] at (3/2,{sqrt(3)/2}) {};
\node[name=9,label=right:{$(5/2,\sqrt{3}/2)$}] at (5/2,{sqrt(3)/2}) {};
\node[name=10] at (-1/2,{sqrt(3)/2}) {};
\draw (6) -- (4) -- (1) -- (2) -- (3) -- (6) -- (5) (1) -- (3);
\draw (4) -- (5);
\draw (8) -- (2) -- (7) -- (8) -- (3);
\draw (7) -- (9) -- (8);
\draw[dashed] (5) -- (10) -- (1) (3) -- (10);
\node[fill=white,minimum size=0.4em] at (-1/2,{sqrt(3)/2}) {};
\end{tikzpicture}
\end{center}
Observe that $F(9,15,17)$ is unit-distance only if we can arrange for either $x$ or $x'$ to be distance 1 from the point $(5/2,\sqrt{3}/2)$.  Notice that $|x - (5/2,\sqrt{3}/2)| = 1$ would lead to two points with three common unit-distance neighbors, which is impossible.  Moreover, since $|x' - (-1/2,\sqrt{3}/2)| = 1$, we see that $|x' - (5/2,\sqrt{3}/2)| > 1$.  In any case, $F(9,15,17)$ is forbidden.
\end{proof}

\begin{lem}
The following graph is forbidden:
\begin{center}
\begin{tikzpicture}
\node[name=1] at (0,0) {};
\node[name=2] at (1,0) {};
\node[name=3] at (1/2,{sqrt(3)/2}) {};
\node[name=4] at (2,1/2) {};
\node[name=5] at (3/2,{sqrt(3)}) {};
\node[name=6] at (5/2,{sqrt(3)}) {};
\node[name=7] at (2,0) {};
\node[name=8] at (3/2,{sqrt(3)/2}) {};
\node[name=9] at (5/2,{sqrt(3)/2}) {};
\draw (6) -- (4) (1) -- (2) -- (3)  (6) -- (5) (1) -- (3);
\draw (4) -- (5);
\draw (8) -- (2) -- (7) -- (8) -- (3);
\draw (7) -- (9) -- (8);
\draw (5) -- (8);
\draw (6) -- (9);
\draw (1) -- (4);
\end{tikzpicture}

$F(9,15,18)$
\end{center}
\end{lem}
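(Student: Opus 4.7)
The plan is to exploit the large rigid fragment formed by six of the vertices, apply the rhombus identity to the one essential 4-cycle crossing into the hanging triangle, and reduce to a short two-case algebraic check. Labeling vertices $v_1,\ldots,v_9$ in agreement with the figure, the equilateral triangles $\{v_1,v_2,v_3\}$, $\{v_2,v_3,v_8\}$, $\{v_2,v_7,v_8\}$, and $\{v_7,v_8,v_9\}$ share edges pairwise, so the subgraph induced on $\{v_1,v_2,v_3,v_7,v_8,v_9\}$ is rigid; without loss of generality I would fix $v_1=(0,0)$, $v_2=(1,0)$, $v_3=(1/2,\sqrt{3}/2)$, $v_7=(2,0)$, $v_8=(3/2,\sqrt{3}/2)$, $v_9=(5/2,\sqrt{3}/2)$. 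What remains is to place the unit equilateral triangle $v_4 v_5 v_6$ so that $|v_4-v_1|=|v_5-v_8|=|v_6-v_9|=1$.

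Applying \cref{triangleRhombusLemma}(ii) to the 4-cycle $v_5 v_8 v_9 v_6$, opposite edges satisfy $v_8-v_5 = v_9-v_6$, so $v_6 = v_5 + (1,0)$. The equilateral condition then forces $v_4 = v_5 + (1/2,\, s\sqrt{3}/2)$ for some $s\in\{+1,-1\}$. Writing $v_5=(x,y)$, the constraint $|v_6-v_9|=1$ becomes automatic from $|v_5-v_8|=1$, and the two remaining constraints reduce to
\[
(x-3/2)^2 + (y-\sqrt{3}/2)^2 = 1 \quad \text{and} \quad (x+1/2)^2 + (y+s\sqrt{3}/2)^2 = 1.
\]

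For $s=+1$, subtracting yields the linear relation $2x + \sqrt{3}\,y = 1$; substituting back gives $7x^2-7x+4=0$, whose discriminant $49-112=-63$ is negative, so no real solution exists. For $s=-1$, subtracting forces $x=1/2$, and the first equation then forces $y=\sqrt{3}/2$, so $v_5=v_3$, contradicting injectivity of the embedding. Either way no valid embedding exists, so $F(9,15,18)$ is forbidden. The main obstacle is recognizing that a single rhombus identity on the 4-cycle $v_5 v_8 v_9 v_6$ is enough to collapse the problem to two small subcases; without this observation one would face a coordinate system in six unknowns, considerably more tedious to untangle by hand.
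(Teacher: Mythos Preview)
Your proof is correct and follows essentially the same route as the paper: fix the rigid six-vertex strip, apply the rhombus identity to the 4-cycle $v_5v_8v_9v_6$ to get $v_6=v_5+(1,0)$, and split into two cases according to the sign $s$ in $v_4=v_5+(1/2,\,s\sqrt3/2)$. The only difference is cosmetic: in the case $s=-1$ the paper observes geometrically that $v_4$ is forced to lie at distance~1 from $v_7=(2,0)$, making $v_4$ a second common unit-distance neighbour of $(0,0)$ and $(2,0)$; in the case $s=+1$ it observes $|v_4-(2,\sqrt3)|=1$ and uses $|(2,\sqrt3)|=\sqrt7>2$. Your direct algebraic resolution (negative discriminant for $s=+1$, forced coincidence $v_5=v_3$ for $s=-1$) reaches the same contradictions by computation rather than by naming the auxiliary point.
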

\begin{proof}
Fixing some coordinates for a unit-distance subgraph of $F(9,15,18)$, we consider the following two classes of embeddings:

\begin{center}
\begin{tikzpicture}
\node[name=1,label=left:{$(0,0)$}] at (0,0) {};
\node[name=2] at (1,0) {};
\node[name=3] at (1/2,{sqrt(3)/2}) {};
\node[fill=white,label=above:{$x$}] at (2,1.1) {};
\node[name=4] at (2,1) {};
\node[name=5] at (3/2,{1 + sqrt(3)/2}) {};
\node[name=6] at (5/2,{1 + sqrt(3)/2}) {};
\node[name=7] at (2,0) {};
\node[name=8] at (3/2,{sqrt(3)/2}) {};
\node[name=9,label=right:{$(5/2,\sqrt{3}/2)$}] at (5/2,{sqrt(3)/2}) {};
\draw (6) -- (4) (1) -- (2) -- (3)  (6) -- (5) (1) -- (3);
\draw (4) -- (5);
\draw (8) -- (2) -- (7) -- (8) -- (3);
\draw (7) -- (9) -- (8);
\draw (5) -- (8);
\draw (6) -- (9);
\draw[dashed] (4) -- (7);
\end{tikzpicture}
\hspace{1em}
\begin{tikzpicture}
\node[name=1,label=left:{$(0,0)$}] at (0,0) {};
\node[name=2] at (1,0) {};
\node[name=3] at (1/2,{sqrt(3)/2}) {};
\node[name=4,label=left:{$x'$}] at (2,{1 + sqrt(3)}) {};
\node[name=5] at (3/2,{1 + sqrt(3)/2}) {};
\node[name=6] at (5/2,{1 + sqrt(3)/2}) {};
\node[name=7] at (2,0) {};
\node[name=8] at (3/2,{sqrt(3)/2}) {};
\node[name=9,label=right:{$(5/2,\sqrt{3}/2)$}] at (5/2,{sqrt(3)/2}) {};
\node[name=10] at (2,{sqrt(3)}) {};
\node[fill=white,minimum size=0.4em] at (2,{sqrt(3)}) {};
\draw (6) -- (4) (1) -- (2) -- (3)  (6) -- (5) (1) -- (3);
\draw (4) -- (5);
\draw (8) -- (2) -- (7) -- (8) -- (3);
\draw (7) -- (9) -- (8);
\draw (5) -- (8);
\draw (6) -- (9);
\draw[dashed] (4) -- (10);
\draw[dashed] (8) -- (10) -- (9);
\end{tikzpicture}
\end{center}
Observe that $F(9,15,18)$ is unit-distance only if we can arrange for either $|x| = 1$ or $|x'| = 1$.  If $|x| = 1$, then we would have two distinct common unit-distance neighbors between the origin and $(2,0)$, which is impossible.  Moreover, since $|x' - (2,\sqrt{3})| = 1$, it must be that $|x'| > 1$.  Hence, $F(9,15,18)$ is forbidden.
\end{proof}

\begin{lem}
The following graph is forbidden:

\begin{center}
\begin{tikzpicture}
\node[name=1] at (0,0) {};
\node[name=2] at (1/2,{sqrt(3)/2}) {};
\node[name=3] at (1,0) {};
\node[name=4] at (2,{sqrt(3)}) {};
\node[name=5] at (3/2,{sqrt(3)/2}) {};
\node[name=6] at (-1/2,{sqrt(3)/2}) {};
\node[name=7] at (5/2,{sqrt(3)/2}) {};
\node[name=8] at (1,{sqrt(3)}) {};
\node[name=9] at (-1,0) {};
\draw (3) -- (1);
\draw (5) -- (3);
\draw (4) -- (5) -- (7) -- (4);
\draw (1) -- (6) -- (2);
\draw (2) -- (8) -- (5) -- (2) -- (3);
\draw (8) -- (4);
\draw (1) -- (9) -- (6);
\draw (9) -- (7);
\end{tikzpicture}

$F(9,15,19)$
\end{center}
\end{lem}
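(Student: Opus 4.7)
The plan is to exploit the highly constrained neighborhood of vertex 5, which has five unit-distance neighbors $2, 3, 4, 7, 8$ participating in the four triangles $2$-$3$-$5$, $2$-$5$-$8$, $4$-$5$-$7$, and $4$-$5$-$8$. Applying \cref{triangleRhombusLemma}(i), each such triangle contributes an angle of $\pi/3$ at vertex $5$, so the five neighbors lie on the unit circle around $5$ with consecutive pairs separated by angle $\pi/3$. Combined with the requirement that the nine vertices are distinct, this will force $\{2, 3, 4, 7, 8\}$ to occupy five of the six vertices of a regular hexagon centered at $5$. Concretely, I would fix coordinates with $5 = (0,0)$ and $2 = (1,0)$ and then argue that, up to reflection across the $x$-axis, one must have $3 = (1/2, \sqrt{3}/2)$, $8 = (1/2, -\sqrt{3}/2)$, $4 = (-1/2, -\sqrt{3}/2)$, and $7 = (-1, 0)$.

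The only remaining freedom is in vertex $6$, which is constrained by $|6 - 2| = 1$; parameterize $6 = (1 + \cos\beta, \sin\beta)$. The 4-cycle $1$-$3$-$2$-$6$ together with \cref{triangleRhombusLemma}(ii) then determines
\[
1 = 3 + 6 - 2 = (1/2 + \cos\beta,\, \sqrt{3}/2 + \sin\beta).
\]
Vertex $9$ is the apex of an equilateral triangle on the edge $1$-$6$, giving two candidate positions: either $9 = (3/2 + \cos\beta, \sqrt{3}/2 + \sin\beta)$ or $9 = (\cos\beta, \sin\beta)$.

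The final step is to impose $|9 - 7| = 1$ coming from the edge $9$-$7$ in each case. In the first case one computes $|9 - 7|^2 = 8 + 5\cos\beta + \sqrt{3}\sin\beta$, which attains a minimum of $8 - \sqrt{28} > 1$, so no $\beta$ works. In the second case, $|9 - 7|^2 = 2 + 2\cos\beta = 1$ forces $\cos\beta = -1/2$, but the resulting $\beta = \pm 2\pi/3$ makes $6$ coincide with either $3$ or $8$, violating distinctness. The main obstacle is the first step: carefully establishing that the configuration of $\{2, 3, 4, 7, 8\}$ around vertex $5$ is forced up to reflection. This amounts to a short case analysis on the angular positions of each neighbor, ruling out each alternative as causing two labeled vertices to coincide; once settled, the remaining single-parameter computation closes the argument quickly.
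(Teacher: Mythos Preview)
Your argument is correct and follows essentially the same route as the paper: both proofs first pin down the rigid fan of four triangles around vertex $5$ (the paper states this rigidity implicitly by fixing coordinates, while you spell out the angle-chasing), and then reduce to a one-parameter family governed by the position of vertex $6$, finally checking that the edge $9$--$7$ cannot be realized. The only cosmetic difference is in the endgame: the paper observes that $9$ always lies on one of two fixed unit circles (a translation argument via the rhombus $1$--$3$--$2$--$6$) and then checks common-neighbor constraints with $7$, whereas you parameterize by $\beta$ and compute $|9-7|^2$ directly---both lead to the same two degenerate cases and the same contradictions.
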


\newpage

\begin{proof}
Fixing some coordinates for a unit-distance subgraph of $F(9,15,19)$, we consider the following embedding:
\begin{center}
\begin{tikzpicture}
\node[name=1] at (0,0) {};
\node[label=above left:{$(0,0)$},name=2] at (1/2,{sqrt(3)/2}) {};
\node[label=right:{$(1/2,-\sqrt{3}/2)$},name=3] at (1,0) {};
\node[name=4] at (2,{sqrt(3)}) {};
\node[name=5] at (3/2,{sqrt(3)/2}) {};
\node[name=6] at (-1/2,{sqrt(3)/2}) {};
\node[label=right:{$(2,0)$},name=7] at (5/2,{sqrt(3)/2}) {};
\node[name=8] at (1,{sqrt(3)}) {};
\node[label=left:{$x$},name=9] at (-1,0) {};
\draw (3) -- (1);
\draw (5) -- (3);
\draw (4) -- (5) -- (7) -- (4);
\draw (1) -- (6) -- (2);
\draw (2) -- (8) -- (5) -- (2) -- (3);
\draw (8) -- (4);
\draw (1) -- (9) -- (6);
\end{tikzpicture}
\end{center}
Observe that $F(9,15,19)$ is unit-distance if and only if we can arrange for $|x - (2,0)| = 1$.  However, in any embedding of the unit-distance graph depicted above, we either have $|x - (-1/2,-\sqrt{3}/2)| = 1$ or $|x - (1,0)| = 1$.  In the first case, it is impossible for $|x - (2,0)| = 1$ since $|(-1/2,-\sqrt{3}/2) - (2,0)| = \sqrt{7} > 2$.  For the second case, suppose that $|x - (1,0)| = 1$ and $|x - (2,0)| = 1$.  Then since $(3/2,\sqrt{3}/2)$ is occupied, we must have $x = (3/2,-\sqrt{3}/2)$.  But this leads to $x$ and $(0,0)$ having three common unit-distance neighbors, a contradiction.
\end{proof}

\begin{lem}
The following graph is forbidden:
\begin{center}
\begin{tikzpicture}
\node[name=1] at (0,0) {};
\node[name=2,label=right:{$z$}] at (1,0) {};
\node[name=3] at (1/2,{sqrt(3)/2}) {};
\node[label=left:{$x$},name=5] at (-1/2,{sqrt(3)/2}) {};
\node[name=6] at ({sqrt(3)/2 - 1/2}, {1/2 + sqrt(3)/2}) {};
\node[name=7] at (-1/2,{1 + sqrt(3)/2}) {};
\node[label=right:{$y$},name=8] at ({-1/2 + sqrt(3)/2},{3/2 + sqrt(3)/2}) {};
\node[name=4] at (1,1) {};
\node[name=9] at (3/2,3/2) {};
\draw (1) -- (2) -- (3) -- (5) -- (1) -- (3);
\draw (5) -- (6) -- (8) -- (7) -- (5);
\draw (6) -- (7);
\draw (2) -- (4) -- (9) -- (8) -- (4) (2) -- (9);
\end{tikzpicture}

$F(9,15,20)$
\end{center}
\end{lem}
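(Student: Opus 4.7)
The plan is to exploit the three ``rhombus-with-diagonal'' substructures in $F(9,15,20)$ that share the three hub vertices $x = 5$, $y = 8$, and $z = 2$. Each of the vertex subsets $\{1,2,3,5\}$, $\{5,6,7,8\}$, and $\{2,4,8,9\}$ induces a 4-cycle together with one of its diagonals as an edge; by \cref{triangleRhombusLemma}(i), such a configuration is a pair of equilateral triangles glued along the diagonal edge, and the two ``opposite'' vertices of the 4-cycle (those not on the diagonal) are necessarily at distance $\sqrt{3}$ in any embedding. Applying this fact to each of the three rhombi forces $|x - y| = |y - z| = |z - x| = \sqrt{3}$, so in any embedding $x$, $y$, $z$ form an equilateral triangle of side $\sqrt{3}$.

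Next I would pin down the positions of the remaining six vertices $1, 3, 4, 6, 7, 9$. In the first rhombus, vertices $1$ and $3$ are the two common unit-distance neighbors of $x$ and $z$ and are also adjacent; they are therefore the unique pair of points on the perpendicular bisector of segment $xz$ at distance $\sqrt{1 - 3/4} = 1/2$ from its midpoint. A short computation shows that the centroid of an equilateral triangle of side $\sqrt{3}$ lies at distance exactly $1/2$ from each side midpoint, so one of the two points in $\{1,3\}$ must coincide with the centroid of $\triangle xyz$. The same analysis applied to the other two rhombi forces one of $\{6,7\}$ and one of $\{4,9\}$ also to coincide with that same centroid.

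This would place three distinct vertices of $F(9,15,20)$---one from each rhombus---at the same point of $\mathbf{R}^2$, contradicting the injectivity required of an embedding. The main subtlety is to confirm that no choice of orientation or labeling allows the collision to be avoided simultaneously, but the unordered pair of common unit-distance neighbors of a distance-$\sqrt{3}$ segment is completely determined as a set, so one of its two elements must fall on the centroid regardless of how the two labels within each rhombus are assigned. Hence $F(9,15,20)$ admits no unit-distance embedding.
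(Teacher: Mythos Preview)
Your argument is correct and follows essentially the same route as the paper: both observe that the three diamonds force $x$, $y$, $z$ into an equilateral triangle of side $\sqrt{3}$ whose centroid is a common unit-distance neighbor of all three vertices. The paper phrases the final contradiction as a forced $K_{2,3}$ (one of the two common neighbors of $x,y$ must lie at distance $1$ from $z$, giving $x$ and $z$ three common neighbors), while you phrase it as a triple collision at the centroid; these are two ways of reading the same geometric fact.
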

\begin{proof}
Suppose, for a contradiction, that $F(9,15,20)$ were unit-distance.  Then $\{x,y,z\}$ form the vertices of an equilateral triangle of side length $\sqrt{3}$, and we may as well assume $x = (0,0)$, $y = (\sqrt{3}/2,3/2)$, and $z = (\sqrt{3},0)$.  These three points have a common unit-distance neighbor, namely, $(\sqrt{3}/2,1/2)$.  As the two common unit-distance neighbors of $x$ and $y$ are already pictured, one of these must also lie distance 1 from $z$.  However, adding either edge results in two points with three common unit-distance neighbors, a contradiction.
\end{proof}

\begin{lem}
The following graph is forbidden:
\begin{center}
\begin{tikzpicture}
\node[name=1] at (0,0) {};
\node[name=2] at (1,0) {};
\node[name=3] at (1/2,{sqrt(3)/2}) {};
\node[name=5] at (-1/2,{sqrt(3)/2}) {};
\node[name=6] at ({sqrt(3)/2 - 1/2}, {1/2 + sqrt(3)/2}) {};
\node[name=7] at (-1/2,{1 + sqrt(3)/2}) {};
\node[name=8] at ({sqrt(3)/2 - 1/2},{3/2 + sqrt(3)/2}) {};
\node[name=4] at (1,2) {};
\node[name=9] at (3/2,1) {};
\draw (1) -- (2) -- (3) -- (5) -- (1) -- (3);
\draw (5) -- (6) -- (8) -- (7) -- (5);
\draw (6) -- (7);
\draw (3) -- (4) (9) -- (6);
\draw (4) -- (9);
\draw (8) -- (4) (9) -- (2);
\node[fill=none,label={$v$}] at  (.6,.15) {};
\node[fill=none,label={$w$}] at  (.18,1.6) {};
\end{tikzpicture}

$F(9,15,21)$
\end{center}
\end{lem}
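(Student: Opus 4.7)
The plan is to exploit two $4$-cycles in $F(9,15,21)$ which share the edge $4$-$9$: namely $2$-$3$-$4$-$9$ and $4$-$9$-$6$-$8$. By \cref{triangleRhombusLemma}(ii), opposite sides of each of these $4$-cycles are parallel in any embedding; since all four edges of each are unit length, each must be a (non-degenerate) rhombus. Reading off the parallelogram conditions gives the vector identities
\[
3 - 2 \;=\; 4 - 9 \;=\; 8 - 6.
\]

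Next I would fix coordinates using the rigidity of the induced subgraph on $\{1,2,3,5\}$ (two equilateral triangles sharing edge $1$-$3$), taking $1 = (0,0)$, $2 = (1,0)$, $3 = (1/2, \sqrt{3}/2)$, and $5 = (-1/2, \sqrt{3}/2)$, so that $3 - 2 = (-1/2, \sqrt{3}/2)$. The induced subgraph on $\{5,6,7,8\}$ is similarly a pair of equilateral triangles sharing edge $6$-$7$, and is rigid up to rotation about the shared vertex $5$ and a reflection interchanging $6$ and $7$. Since $8-6$ is a unit vector within this cluster, the identity $8 - 6 = (-1/2, \sqrt{3}/2)$ uniquely pins down the placement of the $\{5,6,7,8\}$-cluster in each of the two reflection cases.

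The final step is to compute $|3-8|$ in each of the two resulting configurations. In one case one finds $|3 - 8|^2 = 4$, so vertex $4$ is forced to be the midpoint of $3$ and $8$; but this midpoint turns out to coincide with the already-placed vertex $6$, a contradiction. In the other case one finds $|3 - 8|^2 = 7$, so $|3 - 8| > 2$ and no vertex $4$ at distance $1$ from both $3$ and $8$ can exist. Either outcome contradicts the existence of an embedding, establishing that $F(9,15,21)$ is forbidden.

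The bulk of the work is correctly pinning down the two orientations of the $\{5,6,7,8\}$-cluster from the constraint $8 - 6 = (-1/2, \sqrt{3}/2)$; once that is done the two distance computations are immediate, so I do not expect any serious obstacle.
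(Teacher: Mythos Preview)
Your proposal is correct and follows essentially the same line as the paper's proof. Both arguments use the two $4$-cycles $2\text{-}3\text{-}4\text{-}9$ and $4\text{-}9\text{-}6\text{-}8$ to force edge $2\text{-}3$ parallel to edge $6\text{-}8$ (the edges labelled $v$ and $w$ in the figure), fix coordinates for the diamond on $\{1,2,3,5\}$, reduce the diamond on $\{5,6,7,8\}$ to two placements via the shared vertex $5$, and then rule out a placement of vertex $4$ by computing $|3-8|$ in each case. Your use of the exact vector identity $3-2=4-9=8-6$ (rather than mere parallelism) lets you reach the two configurations directly; the paper lists four sub-cases (pairing $8$ with either $3$ or $1$ in each configuration), but these collapse to the same two checks you describe, with the same outcomes ($|3-8|^2=4$ forcing $4=6$, and $|3-8|^2=7$).
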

\begin{proof}
Suppose, for a contradiction, that $F(9,15,21)$ were unit-distance.  By \cref{triangleRhombusLemma}, the edges $v$ and $w$ must be parallel.  As the two diamond subgraphs share a vertex, we can assume without loss of generality that a unit-distance embedding of $F(9,15,21)$ contains one of the following subgraphs:
\begin{center}
\begin{tikzpicture}
\node[name=1,label=left:{$(0,0)$}] at (0,0) {};
\node[name=2] at (1,0) {};
\node[name=3,label=right:{$(1/2,\sqrt{3}/2)$}] at (1/2,{sqrt(3)/2}) {};
\node[name=5] at (-1/2,{sqrt(3)/2}) {};
\node[name=6] at (0, {sqrt(3)}) {};
\node[name=7] at (-1,{sqrt(3)}) {};
\node[name=8,label=right:{$(-1/2,3\sqrt{3}/2)$}] at (-1/2,{3*sqrt(3)/2}) {};
\draw (1) -- (2) -- (3) -- (5) -- (1) -- (3);
\draw (5) -- (6) -- (8) -- (7) -- (5);
\draw (6) -- (7);
\draw[dashed] (6) -- (3);
\end{tikzpicture}
\begin{tikzpicture}
\node[name=1,label=left:{$(0,0)$}] at (0,0) {};
\node[name=2] at (1,0) {};
\node[name=3,label=right:{$(1/2,\sqrt{3}/2)$}] at (1/2,{sqrt(3)/2}) {};
\node[name=5] at (-1/2,{sqrt(3)/2}) {};
\node[name=6] at (-3/2, {sqrt(3)/2}) {};
\node[name=7] at (-1,{sqrt(3)}) {};
\node[name=8,label=left:{$(-2,\sqrt{3})$}] at (-2,{sqrt(3)}) {};
\draw (1) -- (2) -- (3) -- (5) -- (1) -- (3);
\draw (5) -- (6) -- (8) -- (7) -- (5);
\draw (6) -- (7);
\end{tikzpicture}
\end{center}
Any such embedding of $F(9,15,21)$ must contain a new vertex not already pictured above that is one of the following: (i) a common unit-distance neighbor of $(-1/2,3\sqrt{3}/2)$ and $(1/2,\sqrt{3}/2)$, (ii) a common unit-distance neighbor of  $(-1/2,3\sqrt{3}/2)$ and $(0,0)$, (iii) a common unit-distance neighbor of $(-2,\sqrt{3})$ and $(0,0)$, or (iv) a common unit-distance neighbor of $(-2,\sqrt{3})$ and $(1/2,\sqrt{3})$.  Case (i) leads to two points of distance 2 apart sharing two common unit-distance neighbors, while the cases (ii) -- (iv) each require two points of distance greater than 2 to share a common unit-distance neighbor.  Each case results in a contradiction, so $F(9,15,21)$ is forbidden as desired.
\end{proof}

\begin{lem}
The following graph is forbidden:
\begin{center}
\begin{tikzpicture}
\node[name=1] at (0,0) {};
\node[name=2] at (1,0) {};
\node[name=3] at (1/2,{-sqrt(3)/2}) {};
\node[name=4] at (0,1) {};
\node[name=5] at (1,1) {};
\node[name=6] at (1/2,1/2) {};
\node[name=7] at (2,0) {};
\node[name=8] at (3/2,{-sqrt(3)/2}) {};
\node[name=9] at (-1/2, 1/3) {};
\draw (6) -- (4) -- (1) -- (2) -- (3) (6) -- (5) (1) -- (3);
\draw (4) -- (5) -- (2);
\draw (8) -- (2) -- (7) -- (8) -- (3);
\draw (4) -- (9) -- (6);
\draw (9) -- (7);
\end{tikzpicture}

$F(9,15,22)$
\end{center}
\end{lem}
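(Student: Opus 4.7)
The plan is to fix coordinates using the rigid subgraph on vertices $1,2,3,7,8$, parametrize the remaining configuration by a single angular parameter, and then show that the final edge $\{7,9\}$ cannot be realized for any value of the parameter.

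First I identify the rigid subgraph: the triangles $\{1,2,3\}$, $\{2,3,8\}$, and $\{2,7,8\}$ are all equilateral by \cref{triangleRhombusLemma}(i), and they share the edges $\{2,3\}$ and $\{2,8\}$. The alternative placements of $8$ and $7$ (reflections across $\{2,3\}$ and $\{2,8\}$ respectively) are ruled out by injectivity since they would coincide with $1$ and $3$. After applying an isometry I may take $1=(0,0)$, $2=(1,0)$, $3=(1/2,-\sqrt{3}/2)$, $8=(3/2,-\sqrt{3}/2)$, and $7=(2,0)$.

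Next I parametrize the rest. The unit $4$-cycle $1$-$2$-$5$-$4$ is a rhombus by \cref{triangleRhombusLemma}(ii), so I can write $4=(\cos\theta,\sin\theta)$ and $5=(1+\cos\theta,\sin\theta)$ for some $\theta\in[0,2\pi)$. The triangle $\{4,5,6\}$ forces $6=4+(1/2,\varepsilon\sqrt{3}/2)$ for some $\varepsilon\in\{+1,-1\}$. The edges $\{4,9\}$, $\{6,9\}$ together with $|4-6|=1$ mean $9$ is one of the two apexes of the equilateral triangle on segment $\overline{46}$. One apex is already occupied by $5$, so injectivity forces $9$ to be the other apex, and a direct rotation computation gives $9=(\cos\theta-1/2,\sin\theta+\varepsilon\sqrt{3}/2)$.

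The decisive step is to check the remaining constraint $|9-7|=1$. Expanding yields $|9-7|^2=8-5\cos\theta+\varepsilon\sqrt{3}\sin\theta$, so the constraint reduces to $5\cos\theta-\varepsilon\sqrt{3}\sin\theta=7$. The left-hand side has amplitude $\sqrt{25+3}=2\sqrt{7}<7$, so no $\theta$ satisfies the equation in either case $\varepsilon=\pm 1$, giving the desired contradiction. The main thing to watch is that both sign choices of $\varepsilon$ must fail; fortunately the amplitude bound $2\sqrt{7}<7$ is symmetric in $\varepsilon$, so this causes no difficulty.
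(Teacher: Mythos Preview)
Your proof is correct and follows essentially the same approach as the paper: both fix the rigid five-vertex configuration on $1,2,3,7,8$, split into two cases according to the placement of vertex $6$ relative to the rhombus $1\,2\,5\,4$ (your $\varepsilon=\pm 1$), and then show that vertex $9$ is forced too far from vertex $7$. The paper phrases the final step geometrically---observing that in each case $9$ lies on a unit circle centered at a point of distance $\sqrt{7}$ from $7$---while you obtain the identical bound via the amplitude $2\sqrt{7}$ of the sinusoid, the two being the same estimate since $8-2\sqrt{7}=(\sqrt{7}-1)^2$.
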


\begin{proof}
Fixing some coordinates for a unit-distance subgraph of $F(9,15,22)$, we consider the following two classes of embeddings:
\begin{center}
\begin{tikzpicture}
\node[name=1] at (0,0) {};
\node[name=2] at (1,0) {};
\node[name=3] at (1/2,{-sqrt(3)/2}) {};
\node[name=4] at (0,1) {};
\node[name=5] at (1,1) {};
\node[name=6] at (1/2,{1 - sqrt(3)/2}) {};
\node[name=7,label=right:{$(0,0)$}] at (2,0) {};
\node[name=8,label=right:{$(-1/2,-\sqrt{3}/2)$}] at (3/2,{-sqrt(3)/2}) {};
\node[name=9,label=left:{$x$}] at (-1/2, {1 - sqrt(3)/2}) {};
\draw (6) -- (4) -- (1) -- (2) -- (3) (6) -- (5) (1) -- (3);
\draw (4) -- (5) -- (2);
\draw (8) -- (2) -- (7) -- (8) -- (3);
\draw (4) -- (9) -- (6);
\node[name=10] at (-1/2,{-sqrt(3)/2}) {};
\node[fill=white,minimum size=0.4em] at (-1/2,{-sqrt(3)/2}) {};
\draw[dashed] (9) -- (10);
\draw[dashed] (3) -- (10) -- (1);
\draw[dashed] (6) -- (3);
\end{tikzpicture}
\begin{tikzpicture}
\node[name=1] at (0,0) {};
\node[name=2] at (1,0) {};
\node[name=3] at (1/2,{-sqrt(3)/2}) {};
\node[name=4] at (0,1) {};
\node[name=5] at (1,1) {};
\node[name=6] at (1/2,{1 + sqrt(3)/2}) {};
\node[name=7,label=right:{$(0,0)$}] at (2,0) {};
\node[name=8,label=right:{$(-1/2,-\sqrt{3}/2)$}] at (3/2,{-sqrt(3)/2}) {};
\node[name=9,label=left:{$x'$}] at (-1/2, {1 + sqrt(3)/2}) {};
\draw (6) -- (4) -- (1) -- (2) -- (3) (6) -- (5) (1) -- (3);
\draw (4) -- (5) -- (2);
\draw (8) -- (2) -- (7) -- (8) -- (3);
\draw (4) -- (9) -- (6);
\node[name=10] at (1/2,{sqrt(3)/2}) {};
\node[name=11] at (-1/2,{sqrt(3)/2}) {};
\node[fill=white, minimum size=0.4em] at (1/2,{sqrt(3)/2}) {};
\node[fill=white, minimum size=0.4em] at (-1/2,{sqrt(3)/2}) {};
\draw[dashed] (2) -- (10) -- (1) -- (11) -- (10) -- (6);
\draw[dashed] (11) -- (9);
\end{tikzpicture}
\end{center}
Observe that $F(9,15,22)$ is unit-distance only if we can arrange for either $|x| = 1$ or $|x'| = 1$.  But since $|x - (-5/2,-\sqrt{3}/2)| = 1$ and $|x' - (-5/2,\sqrt{3}/2)| = 1$, we see both $|x| > 1$ and $|x'| > 1$.  Hence, $F(9,15,22)$ is not unit-distance.
\end{proof}

\newpage

\begin{lem}
The following graph is forbidden:
\begin{center}
\begin{tikzpicture}
\node[name=1] at (0,0) {};
\node[name=2] at (1,0) {};
\node[name=3] at (0,1) {};
\node[name=4] at (1,1) {};
\node[name=5] at (1/2,{1 + sqrt(3)/2}) {};
\node[name=6] at (1/2,{-sqrt(3)/2}) {};
\node[name=7] at (2,1/2) {};
\node[name=8] at (5/2,1) {};
\node[name=9] at (5/2,0) {};
\draw (1) -- (2) -- (4) -- (3) -- (1) (8) -- (9);
\draw (1) -- (6) -- (2) (7) -- (6) -- (9) -- (7);
\draw (3) -- (5) -- (4) (7) -- (5) -- (8) -- (7);
\end{tikzpicture}

$F(9,15,23)$
\end{center}
\end{lem}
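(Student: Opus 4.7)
My plan is to derive a contradiction by combining constraints from two rigid substructures of $F(9,15,23)$. Write $p$ for the top-apex vertex drawn at $(1/2, 1+\sqrt{3}/2)$ and $q$ for the bottom-apex vertex drawn at $(1/2, -\sqrt{3}/2)$. I will show that the right-hand subgraph forces $|p-q|=2$ in every embedding, while the left-hand subgraph forbids $|p-q|=2$.

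For the right-hand subgraph, consider the five vertices $p, q, h, r, s$, where $h$ is the hub vertex drawn at $(2, 1/2)$ and $r, s$ are the vertices drawn at $(5/2, 1)$ and $(5/2, 0)$. This subgraph consists of the three triangles $hpr$, $hqs$, and $hrs$. By \cref{triangleRhombusLemma}(i), each of these triangles is equilateral, so $p, q, r, s$ all lie on the unit circle about $h$, and each triangle subtends angle $\pi/3$ at $h$. After a rotation and reflection I may place $h$ at the origin with $r$ at angle $0$ on the unit circle and $s$ at angle $\pi/3$. The distinctness $p \neq s$ then forces $p$ at angle $-\pi/3$, and the distinctness $q \neq r$ forces $q$ at angle $2\pi/3$. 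Thus $p$ and $q$ sit at antipodal points of the unit circle about $h$, giving $|p-q|=2$.

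For the left-hand subgraph, consider the six vertices consisting of $p, q$ and the four rhombus corners. By \cref{triangleRhombusLemma}(ii) the 4-cycle on the rhombus corners is a rhombus; writing its bottom-left corner as $x_0$ and letting $u, v$ be the unit vectors along its bottom and left edges, the four corners are $x_0,\ x_0+u,\ x_0+v,\ x_0+u+v$. Let $n$ be a unit vector perpendicular to $u$. Since $q$ and $p$ are apexes of equilateral triangles on the bottom and top edges of the rhombus, I have $q = x_0 + u/2 + \epsilon_1 (\sqrt{3}/2)\, n$ and $p = x_0 + v + u/2 + \epsilon_2 (\sqrt{3}/2)\, n$ for some $\epsilon_1, \epsilon_2 \in \{-1, +1\}$. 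Subtracting yields $p - q = v + k (\sqrt{3}/2)\, n$ with $k \in \{-2, 0, 2\}$, and a short computation then gives $|p-q|^2 \in \{1,\ 4 \pm 2\sqrt{3}\,(v \cdot n)\}$. Setting $|p-q|^2 = 4$ forces $v \cdot n = 0$, hence $v = \pm u$; but then two of the rhombus corners coincide, contradicting that the embedding is an injection.

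Combining the two computations produces the desired contradiction, so $F(9,15,23)$ is forbidden. The main obstacle I anticipate is the case analysis on the right-hand side: verifying that the three $\pi/3$-angle conditions at $h$, together with the distinctness of $p, q, r, s$, really do force $p$ and $q$ antipodal in every reflection class. I expect this to reduce, as sketched above, to the observation that once two adjacent circle-positions are fixed the other two are uniquely determined by the distinctness requirement.
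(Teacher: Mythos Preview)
Your proof is correct and follows the same overall decomposition as the paper: both arguments hinge on the observation that the right-hand cluster of three equilateral triangles sharing the hub $h$ forces $|p-q|=2$, while the left-hand six-vertex subgraph (the rhombus with two apex triangles) cannot realise $|p-q|=2$. Your treatment of the right-hand side is in fact more explicit than the paper's, which simply asserts without justification that ``$F(9,15,23)$ is unit-distance only if we can arrange for either $|x|=2$ or $|x'|=2$''; your angle-chasing at $h$ supplies exactly the missing argument. On the left-hand side your vector computation $p-q=v+k(\sqrt{3}/2)\,n$ is a cleaner and more uniform alternative to the paper's case split into two embedding classes followed by a coordinate chase; both reach the same conclusion that $|p-q|=2$ would collapse the rhombus, but yours avoids the fixed-coordinate casework entirely.
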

\begin{proof}
Fixing some coordinates for a unit-distance subgraph of $F(9,15,23)$, we consider the following two classes of embeddings:
\begin{center}
\begin{tikzpicture}
\node[name=1] at (0,0) {};
\node[name=2,label=right:{$(0,0)$}] at (1,0) {};
\node[name=3] at (1/2,{sqrt(3)/2}) {};
\node[name=4] at (0,1) {};
\node[name=5,label=right:{$x$}] at (1,1) {};
\node[name=6] at (1/2,{1 + sqrt(3)/2}) {};
\draw (6) -- (4) -- (1) -- (2) -- (3) -- (6) -- (5) (1) -- (3);
\draw (4) -- (5);
\draw[dashed] (2) -- (5);
\end{tikzpicture}
\hspace{1em}
\begin{tikzpicture}
\node[name=1] at (0,0) {};
\node[name=2,label=right:{$(0,0)$}] at (1,0) {};
\node[name=3] at (1/2,{sqrt(3)/2}) {};
\node[name=4] at (0,1) {};
\node[name=5,label=left:{$x'$}] at (-1/2,{1 + sqrt(3)/2}) {};
\node[name=6] at (1/2,{1 + sqrt(3)/2}) {};
\node[name=7] at (-1/2,{sqrt(3)/2}) {};
\node[fill=white,minimum size=0.4em] at (-1/2,{sqrt(3)/2}) {};
\draw (6) -- (4) -- (1) -- (2) -- (3) -- (6) -- (5) (1) -- (3);
\draw (4) -- (5);
\draw[dashed] (1) -- (7) -- (5) (3) -- (7);
\end{tikzpicture}
\end{center}
Observe that $F(9,15,23)$ is unit-distance only if we can arrange for either $|x| = 2$ or $|x'| = 2$; of course, we already know $|x| = 1$.  Suppose that $|x'| = 2$.  Together with $|x' - (-3/2,-\sqrt{3}/2)| = 1$, we must have either $x' = (-1,\sqrt{3})$ or $x' = (-2,0)$.  The first case results in two distinct vertices overlapping at $(-1/2,\sqrt{3}/2)$, while the second case results in two distinct vertices overlapping at $(-1,0)$.  Neither allows for an embedding of $F(9,15,23)$, so it cannot be unit-distance.
\end{proof}

\begin{lem}
The following graph is forbidden:
\begin{center}
\begin{tikzpicture}
\node[name=1] at (0,0) {};
\node[name=2] at (1,0) {};
\node[name=3] at (1/2,{sqrt(3)/2}) {};
\node[name=4] at (0,-3/2) {};
\node[name=6] at (1/2,-1/2) {};
\node[name=7] at (2,0) {};
\node[name=8] at (3/2,{sqrt(3)/2}) {};
\node[name=5] at (5/4,{-1/2}) {};
\node[name=9] at (1.75,-3/2) {};
\draw (6) -- (4) -- (1) -- (2) -- (3) -- (6) (1) -- (3);
\draw (8) -- (2) -- (7) -- (8) -- (3);
\draw (6) -- (5) -- (9) -- (4);
\draw (5) -- (7) -- (9);
\node[fill=none] at (1.1,{sqrt(3)/4}) {$v$};
\node[fill=none] at (5/4,{-1.1}) {$w$};
\end{tikzpicture}

$F(9,15,24)$
\end{center}
\end{lem}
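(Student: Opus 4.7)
The plan is to place a rigid subgraph, parameterize the remaining vertices by two unit vectors, and check that every resulting embedding collapses vertices.

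I would normalize by placing $1$ at $(0,0)$, $2$ at $(1,0)$, and, after reflecting through the $x$-axis if necessary, $3$ at $(1/2, \sqrt{3}/2)$. By \cref{triangleRhombusLemma}(i), the triangles on $\{1,2,3\}$, $\{2,3,8\}$, and $\{2,7,8\}$ are equilateral; since injectivity rules out $8 = 1$ and $7 = 3$, these triangles rigidly force $8 = (3/2, \sqrt{3}/2)$ and $7 = (2, 0)$. Applying \cref{triangleRhombusLemma}(ii) to the $4$-cycles on $\{1,3,6,4\}$ and on $\{4,6,5,9\}$ yields $\vec{95} = \vec{46} = \vec{13}$, so setting $v = \vec{14}$ and $w = \vec{49}$ (each a unit vector), we obtain $4 = v$, $6 = (1/2,\sqrt{3}/2) + v$, $9 = v + w$, and $5 = (1/2,\sqrt{3}/2) + v + w$.

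The two remaining unit-distance constraints, $|7 - 9| = 1$ and $|7 - 5| = 1$, become
\[
|v + w - (2,0)| = 1 \quad \text{and} \quad |v + w - (3/2, -\sqrt{3}/2)| = 1.
\]
Writing $(A, B) = v + w$, subtracting these two equations produces the linear relation $A + \sqrt{3} B = 1$, and back-substitution reduces the system to $B(2B + \sqrt{3}) = 0$. The root $B = -\sqrt{3}/2$ gives $|v+w|^2 = 7 > 4$, impossible since $v, w$ are unit vectors. The root $B = 0$ forces $v + w = (1, 0)$; the only unit vectors summing to $(1, 0)$ are $(1/2, \sqrt{3}/2)$ and $(1/2, -\sqrt{3}/2)$, and whichever is assigned to $v$, either $4$ collides with $3$ or $6$ collides with $2$, violating injectivity.

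The only obstacle is careful bookkeeping. The three equilateral triangles in the upper half rigidly lock down five vertices, the two rhombi reduce the remaining four vertices to a two-parameter family in $v$ and $w$, and the two residual edge constraints cut this down to a small finite set of candidates, each of which collapses a pair of vertices.
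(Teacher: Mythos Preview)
Your proof is correct and takes essentially the same approach as the paper's: fix the rigid five-vertex piece $\{1,2,3,7,8\}$, use the two rhombi to propagate the constraint $\vec{95}=\vec{46}=\vec{13}$, and then show the remaining edge conditions at vertex~$7$ are unsatisfiable. The paper phrases the endgame more geometrically—it places the triangle $\{5,7,9\}$ directly (with $\{5,9\}=\{(3,0),(5/2,-\sqrt 3/2)\}$) and observes that vertex~$9$ lands at distance greater than $2$ from the origin, so no vertex~$4$ can serve as a common unit-distance neighbor of $1$ and $9$—whereas your bound $|v+w|\le 2$ and your collision check in the $v+w=(1,0)$ case encode exactly the same obstructions.
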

\begin{proof}
Suppose, for a contradiction, that $F(9,15,24)$ were unit-distance.  By \cref{triangleRhombusLemma}(ii), the edges $v$ and $w$ must be parallel.  Without loss of generality, then, we can assume that a unit-distance embedding of $F(9,15,24)$ contains the following unit-distance subgraph:
\begin{center}
\begin{tikzpicture}
\node[label=left:{$(0,0)$},name=1] at (0,0) {};
\node[name=2] at (1,0) {};
\node[name=3] at (1/2,{sqrt(3)/2}) {};
\node[name=7] at (2,0) {};
\node[name=8] at (3/2,{sqrt(3)/2}) {};
\node[name=5,label=right:{$(5/2,-\sqrt{3}/2)$}] at (5/2,{-sqrt(3)/2}) {};
\node[label=right:{$(3,0)$},name=9] at (3,0) {};
\draw (1) -- (2) -- (3) -- (1) -- (3);
\draw (8) -- (2) -- (7) -- (8) -- (3);
\draw (5) -- (9);
\draw (5) -- (7) -- (9);
\end{tikzpicture}
\end{center}
Observe, then, that $F(9,15,24)$ is unit-distance only if either $(3,0)$ or $(5/2,-\sqrt{3}/2)$ share a common unit-distance neighbor with the origin.  Of course, both lie at distance greater than 2 from the origin, so neither is possible and $F(9,15,24)$ must be forbidden.
\end{proof}

\begin{lem}
The following graph is forbidden:
\begin{center}
\begin{tikzpicture}
\node[name=1] at (0,0) {};
\node[name=2] at (1,0) {};
\node[name=3] at (1/2,{sqrt(3)/2}) {};
\node[name=4] at (0,9/8) {};
\node[name=5] at (1,9/8) {};
\node[name=6] at (1/2,{1 + sqrt(3)/2}) {};
\node[name=7] at (2,0) {};
\node[name=8] at (3/2,{sqrt(3)/2}) {};
\node[name=9] at (3/2,{1 + sqrt(3)/2}) {};
\draw (6) -- (4) -- (1) -- (2) -- (3) -- (6) -- (5) (1) -- (3);
\draw (4) -- (5);
\draw (8) -- (2) -- (7) -- (8) -- (3);
\draw (6) -- (9) -- (5);
\draw (9) -- (7);
\end{tikzpicture}

$F(9,15,25)$
\end{center}
\end{lem}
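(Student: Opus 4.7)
The plan is to pin down most of the embedding using the rigid base subgraph on $\{1,2,3,7,8\}$ together with the two $4$-cycles $1$-$3$-$6$-$4$ and $4$-$5$-$9$-$6$, reducing the remaining choice to a single angle $\alpha$, and then derive a contradiction from the edge $7$-$9$.

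First I would use the edge-sharing triangles $\{1,2,3\}$, $\{2,3,8\}$, and $\{2,7,8\}$ to fix $1=(0,0)$, $2=(1,0)$, $3=(1/2,\sqrt{3}/2)$, $8=(3/2,\sqrt{3}/2)$, $7=(2,0)$; the alternative choice at each step identifies a new vertex with one already placed, violating injectivity.  Next, applying \cref{triangleRhombusLemma}(ii) to the $4$-cycle $1$-$3$-$6$-$4$ (all of whose edges have length $1$) gives $6 - 4 = \pm(3-1) = \pm(1/2,\sqrt{3}/2)$.  The minus sign forces $|4|=|4-(1,\sqrt{3})|=1$ simultaneously, whose only solution is the tangent point $4 = (1/2,\sqrt{3}/2) = 3$, a contradiction; so $6 = 4 + (1/2,\sqrt{3}/2)$, and writing $4 = (\cos\alpha,\sin\alpha)$ captures the last degree of freedom.

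The equilateral triangles $\{4,5,6\}$ and $\{5,6,9\}$ place $5$ at one of two reflections across the line $4$-$6$, and the rhombus $4$-$5$-$9$-$6$ (again by \cref{triangleRhombusLemma}(ii)) then determines $9 = 5 + 6 - 4$.  The two cases work out to
\[
\text{(A)}\ \ 5 = (\cos\alpha - \tfrac{1}{2},\, \sin\alpha + \tfrac{\sqrt{3}}{2}),\ \ 9 = (\cos\alpha,\, \sin\alpha + \sqrt{3}),
\]
\[
\text{(B)}\ \ 5 = (\cos\alpha + 1,\, \sin\alpha),\ \ 9 = (\cos\alpha + \tfrac{3}{2},\, \sin\alpha + \tfrac{\sqrt{3}}{2}).
\]
The edge $7$-$9$ requires $|9 - (2,0)| = 1$.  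In case (A) this reduces to $4\cos\alpha - 2\sqrt{3}\sin\alpha = 7$, which is impossible because the left side is bounded in absolute value by $\sqrt{28} = 2\sqrt{7} < 7$.  In case (B) it reduces to $\cos\alpha - \sqrt{3}\sin\alpha = 1$, equivalently $2\cos(\alpha + \pi/3) = 1$, whose only solutions modulo $2\pi$ are $\alpha = 0$ (giving $4 = (1,0) = 2$) and $\alpha = -2\pi/3$ (giving $6 = (0,0) = 1$).  Each collapse violates injectivity.

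The step I expect to be most delicate is the bookkeeping in case (B): one must carry both reflection choices for $5$ through the parallelogram identity $9 = 5 + 6 - 4$, and then verify that each solution to the final trigonometric equation really does collapse a diamond vertex onto a base vertex.  The geometric reductions themselves are routine applications of \cref{triangleRhombusLemma}.
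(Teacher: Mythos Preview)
Your proof is correct and follows essentially the same decomposition as the paper: fix the rigid chain on $\{1,2,3,8,7\}$, then split into two cases according to the position of $5$ relative to the segment $4$--$6$. Your case (A) is the paper's ``$x'$'' class and your case (B) is its ``$x$'' class.

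The only difference is in how each case is dispatched. In the $x'$/case-(A) branch the paper observes that $9$ is forced to lie at distance $1$ from the point corresponding to your $(0,\sqrt{3})$, hence at distance $>\sqrt{7}-1>1$ from $7$; your trigonometric bound $|4\cos\alpha-2\sqrt{3}\sin\alpha|\le\sqrt{28}<7$ says the same thing. In the $x$/case-(B) branch the paper notes that the forced relations $|5-2|=1$ and $|9-8|=1$ (both follow immediately from your parametrization) would give $2$ and $9$ three common unit-distance neighbors $\{5,7,8\}$, a $K_{2,3}$; you instead solve $\cos\alpha-\sqrt{3}\sin\alpha=1$ explicitly and exhibit the vertex collisions at $\alpha=0$ and $\alpha=-2\pi/3$. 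Both routes are valid; the paper's are slightly quicker, yours are more self-contained. (Incidentally, your ``minus sign'' sub-case for the rhombus $1$--$3$--$6$--$4$ never actually occurs---the parallelogram identity determines $6-4=3-1$ uniquely once all four vertices are distinct---but your dismissal of it is correct regardless.)
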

\begin{proof}
Fixing some coordinates for a unit-distance subgraph of $F(9,15,25)$, we consider the following two classes of embeddings:
\begin{center}
\begin{tikzpicture}
\node[name=1,label=left:{$(-2,0)$}] at (0,0) {};
\node[name=2] at (1,0) {};
\node[name=3] at (1/2,{sqrt(3)/2}) {};
\node[name=4] at (0,1) {};
\node[name=5] at (1,1) {};
\node[name=6] at (1/2,{1 + sqrt(3)/2}) {};
\node[name=7,label=right:{$(0,0)$}] at (2,0) {};
\node[name=8] at (3/2,{sqrt(3)/2}) {};
\node[name=9,label=right:{$x$}] at (3/2,{1 + sqrt(3)/2}) {};
\draw (6) -- (4) -- (1) -- (2) -- (3) -- (6) -- (5) (1) -- (3);
\draw (4) -- (5);
\draw (8) -- (2) -- (7) -- (8) -- (3);
\draw (6) -- (9) -- (5);
\draw[dashed] (9) -- (8) (5) -- (2);
\end{tikzpicture}
\hspace{1em}
\begin{tikzpicture}
\node[name=1,label=left:{$(-2,0)$}] at (0,0) {};
\node[name=2] at (1,0) {};
\node[name=3] at (1/2,{sqrt(3)/2}) {};
\node[name=4] at (0,1) {};
\node[name=5] at (-1/2,{1 + sqrt(3)/2}) {};
\node[name=6] at (1/2,{1 + sqrt(3)/2}) {};
\node[name=7,label=right:{$(0,0)$}] at (2,0) {};
\node[name=8] at (3/2,{sqrt(3)/2}) {};
\node[name=9,label=right:{$x'$}] at (0,{1 + sqrt(3)}) {};
\node[name=10] at (-1/2,{sqrt(3)/2}) {};
\node[fill=white,minimum size=0.4em] at (-1/2,{sqrt(3)/2}) {};
\node[name=11] at (0,{sqrt(3)}) {};
\node[fill=white,minimum size=0.4em] at (0,{sqrt(3)}) {};
\draw (6) -- (4) -- (1) -- (2) -- (3) -- (6) -- (5) (1) -- (3);
\draw (4) -- (5);
\draw (8) -- (2) -- (7) -- (8) -- (3);
\draw (6) -- (9) -- (5);
\draw[dashed] (1) -- (10) -- (11) -- (3) -- (10) -- (5) (9) -- (11);
\end{tikzpicture}
\end{center}
Observe that $F(9,15,25)$ is unit-distance only if we can arrange for $|x| = 1$ or $|x'| = 1$.  If $|x| = 1$, then $x$ and $(-1,0)$ would have three common unit-distance neighbors, a contradiction.  Moreover, since $|x' - (-2,\sqrt{3})| = 1$, we see $|x'| > 1$.  In any case, $F(9,15,25)$ is forbidden.
\end{proof}

\begin{lem}
The following graph is forbidden:
\begin{center}
\begin{tikzpicture}
\node[name=1] at (0,0) {};
\node[name=2] at (1,0) {};
\node[name=3] at (1/2,{sqrt(3)/2}) {};
\node[name=4] at (0,1) {};
\node[name=5] at (-1/2,{1 + sqrt(3)/2}) {};
\node[name=6] at (1/2,{1 + sqrt(3)/2}) {};
\node[name=7] at (2,0) {};
\node[name=8] at (3/2,{sqrt(3)/2}) {};
\node[name=9] at (-1,1) {};
\draw (6) -- (4) -- (1) -- (2) -- (3) -- (6) -- (5) (1) -- (3);
\draw (4) -- (5);
\draw (8) -- (2) -- (7) -- (8) -- (3);
\draw (4) -- (9) -- (5);
\draw (9) -- (7);
\end{tikzpicture}

$F(9,15,26)$
\end{center}
\end{lem}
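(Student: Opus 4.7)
The plan is to identify a rigid subgraph of $F(9,15,26)$, fix coordinates for it, and then show that the remaining four vertices cannot all be placed consistently. The triangles on vertex sets $\{v_1, v_2, v_3\}$, $\{v_2, v_3, v_8\}$, and $\{v_2, v_7, v_8\}$ are each equilateral by \cref{triangleRhombusLemma}(i), so this five-vertex subgraph is rigid. I would set $v_1 = (0,0)$, $v_2 = (1,0)$, and (WLOG, by reflection) $v_3 = (1/2, \sqrt{3}/2)$, which then forces $v_7 = (2,0)$ and $v_8 = (3/2, \sqrt{3}/2)$.

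Next, the $4$-cycle $v_1 v_3 v_6 v_4$ has no chord in $F(9,15,26)$, so \cref{triangleRhombusLemma}(ii) forces $v_6 - v_4 = v_3 - v_1 = (1/2, \sqrt{3}/2)$. The triangles $v_4 v_5 v_6$ and $v_4 v_5 v_9$ are both equilateral and share the edge $v_4 v_5$. Since the only common unit-distance neighbors of $v_3$ and $v_7$ (which lie $\sqrt{3}$ apart) are $v_2$ and $v_8$, both already placed, we must have $v_6 \neq v_9$; therefore $v_6$ and $v_9$ lie on opposite sides of the line through $v_4$ and $v_5$, and $v_4 v_6 v_5 v_9$ forms a rhombus.

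Because $v_6 - v_4$ makes angle $\pi/3$ with the $x$-axis, the two choices for $v_5 - v_4$ (the third vertex of an equilateral triangle on segment $v_4 v_6$) are $(1,0)$ and $(-1/2, \sqrt{3}/2)$; these force $v_9 = v_4 + (1/2, -\sqrt{3}/2)$ and $v_9 = v_4 + (-1,0)$ respectively. Writing $v_4 = (\cos\theta, \sin\theta)$ on the unit circle at the origin and imposing $|v_9 - v_7| = 1$, the second case reduces to $\cos\theta = 3/2$, which has no solution, while the first reduces to $3\cos\theta + \sqrt{3}\sin\theta = 3$, whose only solutions on the unit circle are $v_4 \in \{v_2, v_3\}$ --- both already occupied. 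Thus no embedding exists.

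The main obstacle, more conceptual than computational, is identifying the two-layer rigid structure: the five-vertex core on $\{v_1, v_2, v_3, v_7, v_8\}$ together with the chord-free $4$-cycle $v_1 v_3 v_6 v_4$, which together reduce the placement problem to a single parameter $\theta$ subject to one remaining circle constraint. Once this setup is in place, the final elimination via two circle intersections is entirely routine, echoing the style of the arguments in \cref{n8-5,n8-6}.
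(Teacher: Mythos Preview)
Your proof is correct and follows essentially the same approach as the paper: fix the rigid five-vertex subgraph on $\{v_1,v_2,v_3,v_7,v_8\}$, use the rhombus $v_1 v_3 v_6 v_4$ to determine $v_6-v_4$, then split into the two cases for the position of $v_5$ (and hence $v_9$) relative to the segment $v_4 v_6$. The paper resolves the two cases via its preliminaries dichotomy and geometric distance bounds rather than your direct parametrization $v_4=(\cos\theta,\sin\theta)$, but the structure is identical; as a minor aside, your argument that $v_6\neq v_9$ via common neighbors of $v_3$ and $v_7$ is unnecessary, since embeddings are injective by definition.
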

\begin{proof}
Fixing some coordinates for a unit-distance subgraph of $F(9,15,26)$, we consider the following two classes of embeddings:
\begin{center}
\begin{tikzpicture}
\node[name=1] at (0,0) {};
\node[name=2] at (1,0) {};
\node[name=3] at (1/2,{sqrt(3)/2}) {};
\node[name=4] at (0,1) {};
\node[name=5] at (1,1) {};
\node[name=6] at (1/2,{1 + sqrt(3)/2}) {};
\node[name=7,label=right:{$(0,0)$}] at (2,0) {};
\node[name=8] at (3/2,{sqrt(3)/2}) {};
\node[name=9] at (1/2,{1 - sqrt(3)/2}) {};
\node[fill=none] at (0.5,0.35) {$x$};
\node[name=10] at (1/2,{-sqrt(3)/2}) {};
\node[fill=white,minimum size=0.4em] at (1/2,{-sqrt(3)/2}) {};
\draw (6) -- (4) -- (1) -- (2) -- (3) -- (6) -- (5) (1) -- (3);
\draw (4) -- (5);
\draw (8) -- (2) -- (7) -- (8) -- (3);
\draw (4) -- (9) -- (5);
\draw[dashed] (1) -- (10) -- (2) -- (5) (9) -- (10);
\end{tikzpicture}
\begin{tikzpicture}
\node[name=1] at (0,0) {};
\node[name=2] at (1,0) {};
\node[name=3] at (1/2,{sqrt(3)/2}) {};
\node[name=4] at (0,1) {};
\node[name=5] at (-1/2,{1 + sqrt(3)/2}) {};
\node[name=6] at (1/2,{1 + sqrt(3)/2}) {};
\node[name=7,label=right:{$(0,0)$}] at (2,0) {};
\node[name=8] at (3/2,{sqrt(3)/2}) {};
\node[name=9,label=left:{$x'$}] at (-1,1) {};
\node[name=10] at (-1/2,{sqrt(3)/2}) {};
\node[fill=white,minimum size=0.4em] at (-1/2,{sqrt(3)/2}) {};
\node[name=11] at (-1,0) {};
\node[fill=white,minimum size=0.4em] at (-1,0) {};
\draw (6) -- (4) -- (1) -- (2) -- (3) -- (6) -- (5) (1) -- (3);
\draw (4) -- (5);
\draw (8) -- (2) -- (7) -- (8) -- (3);
\draw (4) -- (9) -- (5);
\draw[dashed] (1) -- (10) -- (11) -- (1) (3) -- (10) -- (5) (11) -- (9);
\end{tikzpicture}
\end{center}
Observe that $F(9,15,26)$ is unit-distance only if we can arrange for $|x| = 1$ or $|x'| = 1$.  As $|x' - (-3,0)| = 1$, we see $|x'| > 1$.  Suppose, for a contradiction, that $|x| = 1$.  Then $x$ is a common unit-distance neighbor of $(-3/2,-\sqrt{3}/2)$ distinct from $(-1,0)$, in which case $x = (-1/2,-\sqrt{3}/2)$.  This leads to several vertices occupying the same location, violating our assumption that this drawing is an embedding.  Hence, $F(9,15,26)$ is forbidden.
\end{proof}

\newpage

\begin{lem}
The following graph is forbidden:

\begin{center}
\begin{tikzpicture}
\node[name=1] at (0,0) {};
\node[name=2] at (1,0) {};
\node[name=3] at (2,0) {};
\node[name=4] at (1/2,{sqrt(3)/2}) {};
\node[name=5] at (3/2,{sqrt(3)/2}) {};
\node[name=6] at (1,{sqrt(3)}) {};
\draw (1) -- (2) -- (3) -- (5) -- (2) -- (4) -- (1);
\draw (4) -- (5) -- (6) -- (4);
\node[name=7] at (1/2,1/3) {};
\node[name=8] at (3/2,1/3) {};
\node[name=9] at (1,9/8) {};
\draw (1) -- (7) -- (8) -- (9) -- (7);
\draw (8) -- (3);
\draw (9) -- (6);
\end{tikzpicture}

$F(9,15,27)$
\end{center}

\end{lem}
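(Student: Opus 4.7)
The plan is to argue that the subgraph on $\{1,2,3,4,5,6\}$ of $F(9,15,27)$ is rigid and forces $\{1,3,6\}$ into an equilateral triangle of side $2$ whose side-midpoints are exactly the positions of $\{2,4,5\}$, and then to show that no equilateral triangle on $\{7,8,9\}$ of side $1$ can be attached to this configuration via the edges $1\text{-}7$, $3\text{-}8$, $6\text{-}9$.

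First I would observe that the edges among $\{1,\ldots,6\}$ form four equilateral triangles $1\text{-}2\text{-}4$, $2\text{-}4\text{-}5$, $2\text{-}3\text{-}5$, $4\text{-}5\text{-}6$ glued along shared edges, so this induced subgraph is rigid by repeated application of \cref{triangleRhombusLemma}. Fixing coordinates $1 = (0,0)$, $3 = (2,0)$, $6 = (1,\sqrt{3})$ then forces $2 = (1,0)$, $4 = (1/2,\sqrt{3}/2)$, $5 = (3/2,\sqrt{3}/2)$; in particular $\{1,3,6\}$ is an equilateral triangle of side $2$ whose three side-midpoints coincide with $\{2,4,5\}$.

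Since the triangle on $\{7,8,9\}$ is equilateral of side $1$, I would next split into cases based on its orientation relative to $1\text{-}3\text{-}6$. Using complex coordinates with $7 = u$, $8 = 2 + v$, $9 = (1+i\sqrt{3}) + w$ and $|u|=|v|=|w|=1$, the equilateral condition reads $9 - 7 = e^{\pm i\pi/3}(8-7)$. The identity $6 - 1 = e^{i\pi/3}(3-1)$ collapses the positive-orientation case to $w = e^{-i\pi/3}u + e^{i\pi/3}v$; imposing $|w| = 1$ together with $|8-7| = 1$ then forces $u \in \{1, e^{i\pi/3}\}$, so that $7$ would coincide with vertex $2$ or vertex $4$, violating injectivity of the embedding. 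In the negative-orientation case the analogous simplification yields $w = e^{i\pi/3}u + e^{-i\pi/3}v - 2i\sqrt{3}$, and the reverse triangle inequality applied to $|w|=1$ gives $|e^{i\pi/3}u + e^{-i\pi/3}v| \geq 2\sqrt{3} - 1$, whereas the ordinary triangle inequality yields $|e^{i\pi/3}u + e^{-i\pi/3}v| \leq |u| + |v| = 2 < 2\sqrt{3} - 1$, a contradiction.

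The main obstacle is keeping the complex-number bookkeeping clean across the two orientation cases; once the identity $6 - 1 = e^{i\pi/3}(3-1)$ is isolated, both cases reduce to short numerical checks. The underlying geometric picture is simply that the medial triangle of $1\text{-}3\text{-}6$ is already occupied by the vertices $\{2,4,5\}$, leaving no room for a side-$1$ equilateral triangle at unit distance from each of $\{1,3,6\}$.
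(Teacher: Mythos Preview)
Your proof is correct and takes a genuinely different route from the paper. The paper simply fixes coordinates for the rigid subgraph on $\{1,\dots,6\}$ and then asserts, after writing down a system of six polynomial equations in six real unknowns, that the only solutions place $\{7,8,9\}$ at the already-occupied points $(1,0)$, $(1/2,\sqrt{3}/2)$, $(3/2,\sqrt{3}/2)$; no derivation is given, and the claim is presumably verified by computer algebra. Your approach instead works in complex coordinates and splits on the orientation of the unit triangle $\{7,8,9\}$ relative to $\{1,3,6\}$. In the same-orientation case the identity $6-1 = e^{i\pi/3}(3-1)$ cancels the translation terms, and the two remaining constraints $|w|=1$ and $|8-7|=1$ reduce to a short computation forcing $u \in \{1, e^{i\pi/3}\}$ (it may be worth spelling out that $|w|=1$ alone yields $v/u \in \{1, e^{2i\pi/3}\}$, and $v=u$ is immediately incompatible with $|2+v-u|=1$). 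In the opposite-orientation case a residual term $-2i\sqrt{3}$ survives and the reverse triangle inequality finishes at once. This is more transparent than the paper's argument and needs no machine verification; the geometric content you identify---that the medial triangle of $\{1,3,6\}$ is already occupied by $\{2,4,5\}$---is exactly what the paper's polynomial system is detecting, but your argument makes it visible.
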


\begin{proof}
Fix some coordinates for the following rigid unit-distance subgraph of $F(9,15,27)$:
\begin{center}
\begin{tikzpicture}
\node[name=1,label=left:{$(0,0)$}] at (0,0) {};
\node[name=2] at (1,0) {};
\node[name=3,label=right:{$(2,0)$}] at (2,0) {};
\node[name=4] at (1/2,{sqrt(3)/2}) {};
\node[name=5] at (3/2,{sqrt(3)/2}) {};
\node[name=6,label=right:{$(1,\sqrt{3})$}] at (1,{sqrt(3)}) {};
\draw (1) -- (2) -- (3) -- (5) -- (2) -- (4) -- (1);
\draw (4) -- (5) -- (6) -- (4);
\end{tikzpicture}
\end{center}
Then $F(9,15,27)$ is unit-distance only if we can find three points $(x_1,x_2),(y_1,y_2),(z_1,z_2) \in \mathbf{R}^2$, distinct from the vertices depicted above, satisfying the following system of equations:
\begin{align*}
x_1^2 + x_2^2 &= 1\\
y_1^2 + y_2^2 &= 1\\
z_1^2 + z_2^2 &= 1\\
(x_1 + y_1 - 2)^2 + (x_2 + y_2)^2 &= 1\\
(x_1 + z_1 - 1)^2 + (x_2 + z_2 - \sqrt{3})^2 &= 1\\
(y_1 - z_1)^2 + (y_2 - z_2)^2 &= 1
\end{align*}
There are no solutions to this system except for those involving the occupied points $(1,0)$, $(1/2,\sqrt{3}/2)$, and $(3/2,\sqrt{3}/2)$.  Hence, $F(9,15,27)$ is forbidden.
\end{proof}

\begin{lem}
The following graph is forbidden:
\begin{center}
\begin{tikzpicture}
\node[name=1] at (0,0) {};
\node[name=2] at (1,0) {};
\node[name=3] at (1/2,{sqrt(3)/2}) {};
\node[name=4] at (3/2,{sqrt(3)/2}) {};
\node[name=5] at (1/2,{1 + sqrt(3)/2}) {};
\node[name=6] at (3/2,{1 + sqrt(3)/2}) {};
\draw (2) -- (3) -- (1) -- (2) -- (4) -- (3);
\draw (3) -- (5) -- (6) -- (4);
\node[name=7] at (1, 5/4) {};
\node[name=8] at (5/2,{sqrt(3)/2}) {};
\draw (5) -- (7) -- (6) (8) -- (7);
\node[name=9] at (5/2, {1 + sqrt(3)/2}) {};
\draw (6) -- (9) -- (8);
\draw (9) -- (4);
\draw (8) -- (1);
\end{tikzpicture}

$F(9,15,28)$
\end{center}
\end{lem}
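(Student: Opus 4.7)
The plan is to reduce the set of potential embeddings of $F(9,15,28)$ to finitely many candidates via the parallelogram and triangle identities of \cref{triangleRhombusLemma}, then exhibit a vertex coincidence in each.

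First I would fix coordinates using the rigidity of the subgraph on $\{1,2,3,4\}$: set $1 = (0,0)$, $2 = (1,0)$, $3 = (1/2, \sqrt{3}/2)$, $4 = (3/2, \sqrt{3}/2)$. Identifying $\mathbf{R}^2$ with $\mathbf{C}$ and writing $\omega = e^{i\pi/3}$, these become $1 = 0$, $2 = 1$, $3 = \omega$, $4 = 1 + \omega$. Applying \cref{triangleRhombusLemma}(ii) to the 4-cycle $3\text{-}5\text{-}6\text{-}4$ gives $5 - 3 = 6 - 4$, so writing $z$ for this common unit vector we have $5 = \omega + z$ and $6 = 1 + \omega + z$. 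The triangles $5\text{-}6\text{-}7$ and $4\text{-}6\text{-}9$ each admit, via \cref{triangleRhombusLemma}(i), two choices of apex, parametrized as $7 = 5 + \omega^{\varepsilon_7}$ and $9 = 4 + z\omega^{\varepsilon_9}$ for signs $\varepsilon_7, \varepsilon_9 \in \{\pm 1\}$.

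The key move is to apply \cref{triangleRhombusLemma}(ii) to the 4-cycle $7\text{-}6\text{-}9\text{-}8$, which forces $8 = 7 + 9 - 6 = \omega + \omega^{\varepsilon_7} + z\omega^{\varepsilon_9}$. Since $1\text{-}8$ is an edge, we require $|8| = 1$, giving a single unimodular equation in $z$ for each of the four sign combinations. I would then verify case by case that every admissible $z$ causes two of the nine vertices to coincide. For instance, the sign choice $(\varepsilon_7,\varepsilon_9) = (+,+)$ reduces to $|2 + z| = 1$, which combined with $|z| = 1$ forces $z = -1$ and hence $6 = \omega = 3$; the remaining three sign choices yield at most two unit solutions for $z$ each, and each one forces either $5 = 2$, $6 = 3$, or $7 = 4$. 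The main obstacle is purely bookkeeping: systematically tracking the eight subcases, though each individual computation is elementary once the algebraic identity $8 = 7 + 9 - 6$ is in hand.
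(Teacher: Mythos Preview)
Your argument is correct and follows essentially the same approach as the paper: both proofs pin down the rigid diamond on $\{1,2,3,4\}$, propagate the rhombus and triangle constraints of \cref{triangleRhombusLemma} through the remaining vertices, and then check that the edge $\{1,8\}$ forces a vertex coincidence in every surviving case. Your parametrization by the unit vector $z = 5-3$ together with the two sign choices $(\varepsilon_7,\varepsilon_9)$ yields four subcases, while the paper parametrizes by the position of vertex $8$ and the direction of the edge $8\!-\!9$, giving two subcases each with an extra circle intersection; the case analyses are equivalent, and your complex-number bookkeeping is a clean way to carry it out.
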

\begin{proof}

Consider the following unit-distance subgraph of $F(9,15,28)$:

\begin{center}
\begin{tikzpicture}
\node[label=left:{$(0,0)$}, name=1] at (0,0) {};
\node[name=2,label=right:{$(1,0)$}] at (1,0) {};
\node[name=3] at (1/2,{sqrt(3)/2}) {};
\node[name=4] at (3/2,{sqrt(3)/2}) {};
\node[name=5] at (1/2,{1 + sqrt(3)/2}) {};
\node[name=6] at (3/2,{1 + sqrt(3)/2}) {};
\draw (2) -- (3) -- (1) -- (2) -- (4) -- (3);
\draw (3) -- (5) -- (6) -- (4);
\node[name=7] at (1, {1 + sqrt(3)}) {};
\node[label=above:{$x$},name=8] at (2, {1 + sqrt(3)}) {};
\draw (5) -- (7) -- (6) (8) -- (7);
\node[label=above:{$y$},name=9] at (5/2, {1 + sqrt(3)/2}) {};
\draw (6) -- (9) -- (8);
\draw (9) -- (4);
\end{tikzpicture}
\end{center}
By \cref{triangleRhombusLemma}, the edge from $x$ to $y$ must be parallel to either the edge from $(0,0)$ to $(1/2,\sqrt{3}/2)$ or the edge from $(1,0)$ to $(1/2,\sqrt{3}/2)$.  It follows that $F(9,15,28)$ is unit-distance only if we can arrange for $|x| = 1$ and either $y = x + (1/2,-\sqrt{3}/2)$ or $y = x + (1/2,\sqrt{3}/2)$.  Suppose that $|x| = 1$.  Since $|y - (3/2,\sqrt{3}/2)| = 1$, we see that $F(9,15,28)$ is unit-distance only if we can further arrange for either $|x - (1,\sqrt{3})| = 1$ or $|x - (1,0)| = 1$.  As the origin and $(1,\sqrt{3})$ have a unique unit-distance neighbor that already appears, the first case is impossible.  On the other hand, suppose for a contradiction that $|x - (1,0)| = 1$.  This can only be an embedding if $x = (1/2,-\sqrt{3}/2)$, which leads to two common unit-distance neighbors between $x$ and $(3/2,\sqrt{3}/2)$, namely $(1,0)$ and $y$.  But $|x - (3/2,\sqrt{3}/2)| = 2$, so this results in the desired contradiction.
\end{proof}

\begin{lem}
The following graph is forbidden:
\begin{center}
\begin{tikzpicture}
\node[name=2] at (1,0) {};
\node[name=4] at (2,{1 + sqrt(3)}) {};
\node[name=5] at (3/2,{1 + sqrt(3)/2}) {};
\node[name=6] at (5/2,{1 + sqrt(3)/2}) {};
\node[name=7] at (2,0) {};
\node[name=8] at (3/2,{sqrt(3)/2}) {};
\node[name=9] at (5/2,{sqrt(3)/2}) {};
\draw (6) -- (4) (6) -- (5);
\draw (4) -- (5);
\draw (8) -- (2) -- (7) -- (8);
\draw (7) -- (9) -- (8);
\draw (5) -- (8);
\draw (6) -- (9);
\node[name=1] at (3/4,7/4) {};
\node[name=3] at (9/4,5/4) {};
\draw (4) -- (1) -- (2) -- (3) -- (4) (1) -- (3);
\end{tikzpicture}

$F(9,15,29)$
\end{center}
\end{lem}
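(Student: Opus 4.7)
The plan is to reduce any purported embedding of $F(9,15,29)$ to a small finite list of candidate configurations and show that each one forces two vertices to coincide. First I fix coordinates using the two equilateral triangles that share an edge in the lower half of the graph: place their common edge as the segment from $(0,0)$ to $(1,0)$, so the two distinct third vertices are forced to be $(1/2,-\sqrt{3}/2)$ and $(1/2,\sqrt{3}/2)$. After a reflection if necessary, I may take the vertex attached to the diamond (the lower-outer vertex) to be $p := (1/2,-\sqrt{3}/2)$ and the one attached to the upper structure to be $q := (1/2,\sqrt{3}/2)$.

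Next, the two middle vertices and the two base vertices of the top triangle form a 4-cycle of unit edges, so \cref{triangleRhombusLemma}(ii) forces the base of the top triangle to be the translate of the middle edge by a common unit vector $u$. A short perpendicular-bisector computation then shows that the apex $t$ of the top equilateral triangle is one of exactly two points, namely $t = u$ or $t = u + (3/2,\sqrt{3}/2)$, in these coordinates.

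Finally, the diamond joining $p$ to $t$ is formed by two equilateral triangles sharing a unit-length diagonal, so its two shared-neighbor vertices are distinct and exactly distance $1$ apart, which forces $|p - t| = \sqrt{3}$. Substituting each of the two expressions for $t$ into $|p - t|^2 = 3$ and combining with $|u|^2 = 1$ yields a linear equation in $u$ with exactly two unit-circle solutions each. I expect the four resulting values of $u$ to be $(-1,0)$, $(1/2,-\sqrt{3}/2)$, and $(1/2,\sqrt{3}/2)$, with each case forcing a vertex collision: $u=(-1,0)$ makes an upper vertex coincide with $(0,0)$, $u = (1/2,-\sqrt{3}/2)$ makes another coincide with $(1,0)$, and $u = (1/2,\sqrt{3}/2)$ makes $t = q$. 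Each collision contradicts injectivity of an embedding, so $F(9,15,29)$ is forbidden. The only real work is the four-way case check; once the 4-cycle is recognized as a rhombus and the diamond pins down $|p-t|$, the rest is bookkeeping.
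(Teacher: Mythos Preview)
Your proof is correct and follows essentially the same route as the paper's. Both arguments fix coordinates on the lower diamond $\{2,7,8,9\}$, use \cref{triangleRhombusLemma}(ii) on the 4-cycle $5\text{--}6\text{--}9\text{--}8$ to reduce the position of the upper triangle to a single translation parameter, observe that the second diamond $\{1,2,3,4\}$ forces $|2-4|=\sqrt{3}$, and then carry out a finite case check on the two possible apex positions. The only cosmetic difference is that you parametrize by the translation vector $u$ and exhibit explicit vertex collisions ($5=7$, $6=8$, or $4=9$), whereas the paper parametrizes by the position of vertex $4$ and phrases the same collisions as ``two points of distance $2$ with two common unit-distance neighbors'' or ``three common neighbors.'' Your explicit-collision phrasing is arguably cleaner. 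One small point to tighten when you write this up: make explicit that you are taking $7=(0,0)$ and $8=(1,0)$ (not the reverse), since your formulas $t=u$ and $t=u+(3/2,\sqrt{3}/2)$ depend on this choice; it is indeed WLOG after a suitable reflection, but it should be stated.
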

\begin{proof}
Fixing some coordinates for a unit-distance subgraph of $F(9,15,29)$, we consider the following two classes of embeddings:
\begin{center}
\begin{tikzpicture}
\node[label=left:{$(0,0)$},name=2] at (1,0) {};
\node[fill=white,label=above:{$x$}] at (2,1.1) {};
\node[name=4] at (2,1) {};
\node[name=5] at (3/2,{1 + sqrt(3)/2}) {};
\node[name=6] at (5/2,{1 + sqrt(3)/2}) {};
\node[name=7] at (2,0) {};
\node[name=8] at (3/2,{sqrt(3)/2}) {};
\node[label=right:{$(3/2,\sqrt{3}/2)$},name=9] at (5/2,{sqrt(3)/2}) {};
\draw (6) -- (4) (6) -- (5);
\draw (4) -- (5);
\draw (8) -- (2) -- (7) -- (8);
\draw (7) -- (9) -- (8);
\draw (5) -- (8);
\draw (6) -- (9);
\draw[dashed] (4) -- (7);
\end{tikzpicture}
\begin{tikzpicture}
\node[label=left:{$(0,0)$},name=2] at (1,0) {};
\node[fill=white,label=above:{$x'$}] at (2,{1.1 + sqrt(3)}) {};
\node[name=4] at (2,{1 + sqrt(3)}) {};
\node[name=5] at (3/2,{1 + sqrt(3)/2}) {};
\node[name=6] at (5/2,{1 + sqrt(3)/2}) {};
\node[name=7] at (2,0) {};
\node[name=8] at (3/2,{sqrt(3)/2}) {};
\node[name=9,label=right:{$(3/2,\sqrt{3}/2)$}] at (5/2,{sqrt(3)/2}) {};
\draw (6) -- (4) (6) -- (5);
\draw (4) -- (5);
\draw (8) -- (2) -- (7) -- (8);
\draw (7) -- (9) -- (8);
\draw (5) -- (8);
\draw (6) -- (9);
\node[name=10] at (2,{sqrt(3)}) {};
\draw[dashed] (4) -- (10);
\node[fill=white,minimum size=.4em] at (2,{sqrt(3)}) {};
\draw[dashed] (8) -- (10) -- (9);
\end{tikzpicture}
\end{center}
Then $F(9,15,29)$ is unit-distance only if we can arrange for either $|x| = \sqrt{3}$ or $|x'| = \sqrt{3}$.  Suppose, for a contradiction, that $|x| = \sqrt{3}$.  Together with $|x - (1,0)| = 1$, this can only be an embedding if $x = (3/2,-\sqrt{3}/2)$.  This would lead to two points of distance 2 apart, namely $x$ and $(1/2,\sqrt{3}/2)$, with two common unit-distance neighbors, a contradiction.  Suppose instead that $|x'| = \sqrt{3}$.  Arguing similarly, this forces $x' = (0,\sqrt{3})$, leading to three common neighbors between $x'$ and $(3/2,\sqrt{3}/2)$, again a contradiction.  In any case, we see $F(9,15,29)$ is forbidden.
\end{proof}

\begin{lem}
The following graph is forbidden:
\begin{center}
\begin{tikzpicture}
\node[name=1] at (-1/2,0) {};
\node[name=2] at (1/2,0) {};
\node[name=3] at (5/2,0) {};
\node[name=4] at (1/2, {sqrt(3)/2}) {};
\node[name=5] at (1/2, {-sqrt(3)/2}) {};
\node[name=6] at (3/2, {sqrt(3)/2}) {};
\node[name=7] at (3/2, {-sqrt(3)/2}) {};
\draw (1) -- (4) -- (2) -- (5) -- (1);
\draw (3) -- (6) -- (2) -- (7) -- (3);
\draw (4) -- (6) (5) -- (7);
\node[name=8] at (7/8,5/8) {};
\node[name=9] at (7/8,-5/8) {};
\draw (8) -- (9) -- (3) -- (8) -- (1) -- (9);
\end{tikzpicture}

$F(9,15,30)$
\end{center}
\end{lem}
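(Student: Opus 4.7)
The plan is to combine two independent constraints on the embedding. First, the induced subgraph on $\{v_1, v_3, v_8, v_9\}$ is $K_4$ minus the edge $v_1 v_3$, so $v_8$ and $v_9$ are two distinct common unit-distance neighbors of $v_1$ and $v_3$. These two intersection points of the unit circles centered at $v_1$ and $v_3$ are reflections of each other across the line through $v_1$ and $v_3$ and lie at distance $\sqrt{4 - |v_1 - v_3|^2}$ apart. The edge $v_8 v_9$ therefore forces $|v_1 - v_3|^2 = 3$.

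Second, I would analyze the subgraph on $\{v_1, \ldots, v_7\}$, which is exactly the rigid configuration underlying the proof of \cref{collinearLemma}: two rhombi $v_1 v_4 v_2 v_5$ and $v_2 v_6 v_3 v_7$ sharing the vertex $v_2$, together with the edges $v_4 v_6$ and $v_5 v_7$. Fix coordinates with $v_2 = (0,0)$ and $v_1 = (-a, 0)$ where $a = |v_1 - v_2| \in (0, 2]$. By \cref{triangleRhombusLemma}(ii), $v_4$ and $v_5$ are reflections across the $x$-axis, and by \cref{triangleRhombusLemma}(i), the equilateral triangles $v_2 v_4 v_6$ and $v_2 v_5 v_7$ force $\angle v_4 v_2 v_6 = \angle v_5 v_2 v_7 = \pi/3$. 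Combined with the reflection symmetry of $v_6, v_7$ across the line through $v_2$ and $v_3$, a short angle-chase shows that $v_3$ must lie on the $x$-axis. Writing $v_3 = (c, 0)$ with $c \neq 0$, expanding the equation $|v_4 - v_6|^2 = 1$ yields the sign-agnostic relation
\[
a^2 + ac + c^2 = 3,
\]
which handles both the $c > 0$ subcase ($v_2$ between $v_1$ and $v_3$) and the $c < 0$ subcase ($v_2$ outside the segment) uniformly.

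Combining the two constraints gives
\[
3 = |v_1 - v_3|^2 = (a + c)^2 = (a^2 + ac + c^2) + ac = 3 + ac,
\]
so $ac = 0$, contradicting $a > 0$ and $c \neq 0$.

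The main obstacle is the angle chase forcing $v_3$ onto the $x$-axis and ensuring both orientations of the second rhombus are handled. Once collinearity is in hand, the uniform constraint $a^2 + ac + c^2 = 3$ follows from a direct trigonometric computation with $v_4 = (-a/2, \sqrt{1-a^2/4})$ and $v_6$ on the perpendicular bisector of $v_2 v_3$, and the remaining algebra is immediate.
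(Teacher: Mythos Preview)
Your overall plan matches the paper's exactly: isolate the seven-vertex double-rhombus subgraph from \cref{collinearLemma}, argue that $v_1,v_2,v_3$ are collinear, parametrize, and then impose $|v_1-v_3|=\sqrt3$ coming from the diamond on $\{v_1,v_3,v_8,v_9\}$. Your relation $a^2+ac+c^2=3$ is a tidy rewriting of the paper's $a=b/2-\sqrt{3-3b^2/4}$, and the endgame $(a+c)^2=3\Rightarrow ac=0$ is equivalent to the paper's computation.

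The gap is the collinearity step: your angle-chase does not force $v_3$ onto the $x$-axis, and in fact collinearity can fail. With $v_2=0$, $v_4=e^{i\theta}$, $v_5=e^{-i\theta}$ (so $\cos\theta=-a/2$), the two equilateral triangles give $v_6=e^{i(\theta+\epsilon_1\pi/3)}$ and $v_7=e^{i(-\theta+\epsilon_2\pi/3)}$ with \emph{independent} signs $\epsilon_1,\epsilon_2\in\{\pm1\}$, and then $v_3=v_6+v_7$. Only when $\epsilon_1=-\epsilon_2$ is $v_3$ real. When $\epsilon_1=\epsilon_2$ one gets $v_3=2\cos\theta\cdot e^{\pm i\pi/3}=-a\,e^{\pm i\pi/3}$, and for generic $a\in(0,2)$ this is a perfectly valid injective embedding of the seven-vertex graph with $v_1,v_2,v_3$ forming an equilateral triangle of side~$a$; the ``reflection symmetry of $v_6,v_7$ across $v_2v_3$'' you invoke holds here too, with that line at angle $\pm\pi/3$ rather than $0$. (The paper's \cref{collinearLemma} glosses over this same branch.) To close the gap for $F(9,15,30)$: in this non-collinear branch $|v_1-v_3|=a$, so the constraint $|v_1-v_3|=\sqrt3$ forces $a=\sqrt3$, hence $\theta=5\pi/6$; but then $v_6=e^{i7\pi/6}=v_5$ (respectively $v_7=e^{i5\pi/6}=v_4$), so no injective embedding survives. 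With that branch disposed of, your collinear analysis finishes the proof.
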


\begin{proof}

Observe that $F(9,15,30)$ contains the following unit-distance graph:

\begin{center}
\begin{tikzpicture}
\node[label=left:{$x$},name=1] at (0,0) {};
\node[name=2,label=right:{$z$}] at (1,0) {};
\node[label=right:{$y$},name=3] at (2,0) {};
\node[name=4] at (1/2, {sqrt(3)/2}) {};
\node[name=5] at (1/2, {-sqrt(3)/2}) {};
\node[name=6] at (3/2, {sqrt(3)/2}) {};
\node[name=7] at (3/2, {-sqrt(3)/2}) {};
\draw (1) -- (4) -- (2) -- (5) -- (1);
\draw (3) -- (6) -- (2) -- (7) -- (3);
\draw (4) -- (6) (5) -- (7);
\end{tikzpicture}
\end{center}
As in \cref{collinearLemma}, we can fix an embedding with $x = (a,0)$ with $a < 0$, $z = (0,0)$, and $y = (b,0)$ with $b > 0$, where $a = b/2 - \sqrt{3 - 3b^2/4}$.  Then $F(9,15,30)$ is unit-distance only if we can arrange for $b - a = b/2 + \sqrt{3 - 3b^2/4} = \sqrt{3}$.  The two solutions of this equation occur only when $b = 0$ or when $a = 0$, but neither of these are consistent with our setup.  It follows that $F(9,15,30)$ is forbidden.
\end{proof}

\begin{lem}
The following graph is forbidden:
\begin{center}
\begin{tikzpicture}
\node[label=left:{$x$},name=1] at (0,0) {};
\node[label=above left:{$z$},name=2] at (1,0) {};
\node[name=3] at (2,0) {};
\node[name=4] at (3,0) {};
\node[label=left:{$y$},name=5] at (1/2,-{sqrt(3)/2}) {};
\node[name=6] at (5/2,-{sqrt(3)/2}) {};
\node[name=7] at (3/2,{sqrt(3)/2}) {};
\node[name=8] at (1,{sqrt(3)}) {};
\node[name=9] at (2,{sqrt(3)}) {};
\node[fill=white] at (1/2,0) {};
\node[fill=white] at (1,-.75) {};
\draw (1) -- (2) -- (3) -- (4) -- (6) -- (3) -- (7) -- (2) -- (5) -- (1);
\draw (7) -- (8) -- (9) -- (7);
\draw (1) -- (8) (9) -- (4) (5) -- (6);
\end{tikzpicture}

$F(9,15,31)$
\end{center}
\end{lem}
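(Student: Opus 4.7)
The plan is to derive a contradiction by assembling three incompatible angular constraints via \cref{triangleRhombusLemma}. Observe that $F(9,15,31)$ contains three triangles, namely $\{x,z,y\}$, $\{3,4,6\}$, and $\{7,8,9\}$, together with three 4-cycles that are not triangles: $(1,2,7,8)$, $(2,3,6,5)$, and $(3,4,9,7)$.

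Suppose, for contradiction, that $F(9,15,31)$ admits a unit-distance embedding, and identify each vertex with its image in $\mathbf{R}^2$. I would introduce the unit vectors $\vec{v} = z - x$, $\vec{s} = y - z$, and $\vec{w} = 4 - 3$ (the displacement from vertex $3$ to vertex $4$). Applying \cref{triangleRhombusLemma}(ii) to the three 4-cycles above, each of which is a rhombus, yields $6 - 3 = \vec{s}$ from $(2,3,6,5)$, $8 - 7 = -\vec{v}$ from $(1,2,7,8)$, and $9 - 7 = \vec{w}$ from $(3,4,9,7)$. In particular, the two sides of the triangle $\{7,8,9\}$ emanating from vertex $7$ become $-\vec{v}$ and $\vec{w}$.

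Next I would read off angular data from the three triangles via \cref{triangleRhombusLemma}(i). The interior angle of $\pi/3$ at $z$ in $\{x,z,y\}$ lies between $-\vec{v}$ and $\vec{s}$, so the unsigned angle between $\vec{v}$ and $\vec{s}$ equals $2\pi/3$. The interior angle at vertex $3$ in $\{3,4,6\}$ lies between $\vec{w}$ and $\vec{s}$, so the angle between $\vec{w}$ and $\vec{s}$ equals $\pi/3$. Finally, the interior angle at vertex $7$ in $\{7,8,9\}$ lies between $-\vec{v}$ and $\vec{w}$, so the angle between $\vec{v}$ and $\vec{w}$ equals $2\pi/3$.

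These three conditions are incompatible in the plane. Viewing $\vec{v},\vec{s},\vec{w}$ as unit complex numbers and setting $\omega = e^{i\pi/3}$, the first two constraints force $\vec{s}/\vec{v} \in \{\omega^{2},\omega^{-2}\}$ and $\vec{w}/\vec{s} \in \{\omega,\omega^{-1}\}$, so $\vec{w}/\vec{v} \in \{\omega^{3},\omega,\omega^{-1},\omega^{-3}\} = \{-1,\omega,\omega^{-1}\}$. The third constraint demands $\vec{w}/\vec{v} \in \{\omega^{2},\omega^{-2}\}$, and these sets are disjoint, giving the desired contradiction. The main obstacle is purely bookkeeping: carefully tracking signs while propagating edge vectors through the three rhombi, so that the $\pi/3$ angle at each of the three triangles can be expressed as a clean constraint on only $\vec{v}$, $\vec{s}$, and $\vec{w}$.
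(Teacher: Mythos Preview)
Your argument is correct and is essentially the paper's own proof, just phrased in terms of unit-complex-number ratios rather than rotations: both propagate the edge directions through the three rhombi and compare the resulting constraint from the triangles $\{3,4,6\}$ and $\{7,8,9\}$ against the constraint from the triangle $\{x,y,z\}$, obtaining the same incompatibility between $\{\omega^{\pm 2}\}$ and $\{\omega^{\pm 1},-1\}$. If anything, your bookkeeping is a bit more explicit than the paper's.
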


\begin{proof}
Suppose, for a contradiction, that $F(9,15,31)$ were unit-distance.  By considering the equilateral triangle on vertices $\{x,y,z\}$, we can assume that the directed edge from $z \rightarrow y$ can be obtained from the directed edge $z \rightarrow x$ by a rotation of $\pi/3$.  On the other hand, by considering the angles between the edges of the three rhombi, we see that $z \rightarrow y$ can be obtained from $z \rightarrow x$ by a rotation of $\pm \pi/3$ followed by another rotation of $\pm \pi/3$.  As rotation by $\pi/3$ is not equivalent to rotation by any of $-2\pi/3, 0$ or $2\pi/3$, we have arrived at the desired contradiction.
\end{proof}

\begin{lem}
The following graph is forbidden:
\begin{center}
\begin{tikzpicture}
\node[name=1] at (0,0) {};
\node[name=2] at (1,0) {};
\node[name=3] at (1/2,{sqrt(3)/2}) {};
\node[name=4] at (3/2,{sqrt(3)/2}) {};
\node[name=5] at (0,-1) {};
\node[name=6] at (1,-1) {};
\node[name=7] at (1/2,{1 + sqrt(3)/2}) {};
\node[name=8] at (3/2,{1 + sqrt(3)/2}) {};
\draw (1) -- (2) -- (3) -- (4) -- (2) (3) -- (1);
\draw (1) -- (5) -- (6) -- (2);
\draw (3) -- (7) -- (8) -- (4);
\node[name=9] at (-3/8,{sqrt(3)/2}) {};
\draw (7) -- (9) -- (8);
\draw (5) -- (9) --  (6);
\end{tikzpicture}

$F(9,15,32)$
\end{center}
\end{lem}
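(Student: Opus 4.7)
My plan is to use \cref{triangleRhombusLemma} together with the rigid triangulated core on vertices $\{1,2,3,4\}$ to parametrize embeddings of the remaining vertices in terms of two unit vectors, and then derive a contradiction via a short case analysis.

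Using the equilateral triangles on $\{1,2,3\}$ and $\{2,3,4\}$ guaranteed by \cref{triangleRhombusLemma}(i), I would fix coordinates (without loss of generality, up to isometry) with $1=(0,0)$, $2=(1,0)$, $3=(1/2,\sqrt{3}/2)$, and $4=(3/2,\sqrt{3}/2)$. Then \cref{triangleRhombusLemma}(ii) applied to the 4-cycles $1$-$5$-$6$-$2$ and $3$-$7$-$8$-$4$ forces both to be rhombi with horizontal sides $5$-$6$ and $7$-$8$; explicitly, there are unit vectors $u$ and $v$ with $5=u$, $6=u+(1,0)$, $7=(1/2,\sqrt{3}/2)+v$, and $8=(3/2,\sqrt{3}/2)+v$. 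Since $9$ is adjacent to both endpoints of the horizontal unit segment $5$-$6$ and of $7$-$8$, it must sit $\sqrt{3}/2$ above or below each midpoint, so equating the two expressions for $9$ yields
\[
u - v = \left( \frac{1}{2},\, (1 + \beta - \alpha)\frac{\sqrt{3}}{2} \right)
\]
for some $\alpha, \beta \in \{\pm 1\}$. Only the values $1+\beta-\alpha \in \{-1,1\}$ are compatible with $|u-v| \leq 2$, since $1+\beta-\alpha = 3$ produces $|u-v| = \sqrt{7}$.

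In the case $u-v = (1/2, \sqrt{3}/2)$, direct substitution gives $7 = 3 + v = 3 + u - (1/2,\sqrt{3}/2) = u = 5$, violating injectivity of the embedding. In the case $u-v = (1/2,-\sqrt{3}/2)$, we have $v = u + (-1/2,\sqrt{3}/2)$, and imposing $|v|=1$ alongside $|u|=1$ reduces to the equation $u_x - \sqrt{3}\, u_y = 1$ intersected with the unit circle, whose only solutions are $u = (1,0)$ and $u = (-1/2,-\sqrt{3}/2)$; the former forces $5 = 2$, and the latter gives $v = (-1,0)$ so that $8 = 3$. Every sub-case produces a vertex coincidence that contradicts the definition of an embedding, and therefore $F(9,15,32)$ is forbidden. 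The main obstacle is managing the sign choices for $\alpha$ and $\beta$ cleanly and then identifying which vertices collide in each degenerate sub-case, but once the $(u,v)$ parametrization is in place each verification is a short computation.
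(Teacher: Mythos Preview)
Your proof is correct. Both your argument and the paper's rest on the same foundation---fixing the rigid diamond on $\{1,2,3,4\}$, applying \cref{triangleRhombusLemma}(ii) to the two 4-cycles, and then analysing where vertex $9$ can sit---but the organization differs. The paper splits first on which side of the segment $7$--$8$ vertex $9$ lies (their $x$ versus $x'$), then invokes the separate observation that any common unit-distance neighbour of $5$ and $6$ is at distance $1$ from one of $(1/2,\pm\sqrt{3}/2)$, and finally runs through four geometric sub-cases checking for $K_{2,3}$ or distance-$2$ obstructions. You instead parametrize both rhombi symmetrically by unit vectors $u,v$, collapse the whole problem to the single relation $u-v=(1/2,(1+\beta-\alpha)\sqrt{3}/2)$, and dispatch the surviving sign choices by direct computation. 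Your route is more algebraic and more uniform; the paper's is more pictorial. Each buys something: your parametrization makes it transparent that there are only finitely many cases and identifies the colliding vertices immediately, while the paper's approach ties each case to a familiar forbidden configuration ($K_{2,3}$ or two common neighbours at distance $2$), which connects better to the other lemmas in the section.
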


\newpage

\begin{proof}
Fixing some coordinates for a unit-distance subgraph of $F(9,15,32)$, we consider the following two classes of embeddings:
\begin{center}
\begin{tikzpicture}
\node[label=left:{$(0,0)$},name=1] at (0,0) {};
\node[name=2] at (1,0) {};
\node[name=3] at (1/2,{sqrt(3)/2}) {};
\node[label=right:{$(3/2,\sqrt{3}/2)$},name=4] at (3/2,{sqrt(3)/2}) {};
\node[label=left:{$y$},name=5] at (0,-1) {};
\node[label=right:{$z$},name=6] at (1,-1) {};
\node[name=7] at (1/2,{1 + sqrt(3)/2}) {};
\node[name=8] at (3/2,{1 + sqrt(3)/2}) {};
\draw (1) -- (2) -- (3) -- (4) -- (2) (3) -- (1);
\draw (1) -- (5) -- (6) -- (2);
\draw (3) -- (7) -- (8) -- (4);
\node[name=9] at (1,1) {};
\node[fill=none] at (1,1.2) {$x$};
\draw (8) -- (9) -- (7);
\draw[dashed] (9) -- (2);
\end{tikzpicture}
\hspace{1em}
\begin{tikzpicture}
\node[label=left:{$(0,0)$},name=1] at (0,0) {};
\node[name=2] at (1,0) {};
\node[name=3] at (1/2,{sqrt(3)/2}) {};
\node[label=right:{$(3/2,\sqrt{3}/2)$},name=4] at (3/2,{sqrt(3)/2}) {};
\node[label=left:{$y$},name=5] at (0,-1) {};
\node[label=right:{$z$},name=6] at (1,-1) {};
\node[name=7] at (1/2,{1 + sqrt(3)/2}) {};
\node[name=8] at (3/2,{1 + sqrt(3)/2}) {};
\draw (1) -- (2) -- (3) -- (4) -- (2) (3) -- (1);
\draw (1) -- (5) -- (6) -- (2);
\draw (3) -- (7) -- (8) -- (4);
\node[label=left:{$x'$},name=9] at (1,{1 + sqrt(3)}) {};
\draw (8) -- (9) -- (7);
\node[name=10] at (1,{sqrt(3)}) {};
\node[fill=white,minimum size=.4em] at (1,{sqrt(3)}) {};
\draw[dashed] (3) -- (10) -- (4) (10) -- (9);
\end{tikzpicture}
\end{center}
Then $F(9,15,32)$ is unit-distance only if we can arrange for $|x - y| = |x - z| = 1$ or $|x' - y| = |x' - z| = 1$.  Observe that any common unit-distance neighbor of $y$ and $z$ must also lie distance 1 from either $(1/2,\sqrt{3}/2)$ or $(1/2,-\sqrt{3}/2)$.

Notice that $|x - (1/2,\sqrt{3}/2)| \neq 1$.  Otherwise, we would have two points with three common unit-distance neighbors.  Suppose, for a contradiction, that $|x - (1/2,-\sqrt{3}/2)| = 1$.  Together with $|x - (1,0)| = 1$, this can only be an embedding provided $x = (3/2,-\sqrt{3}/2)$.  But this leads to two common unit-distance neighbors between $(1/2,\sqrt{3}/2)$ and $x$ despite $|x - (1/2,\sqrt{3}/2)| = 2$; a contradiction.  Hence, there is no embedding with $|x - y| = |x - z| = 1$.

Now, $|x' - (1,\sqrt{3})| = 1$, so we see that $|x' - (1/2,-\sqrt{3}/2)| > 1$.  For the last remaining case, we suppose $|x' - (1/2,\sqrt{3}/2)| = 1$.  As $|x' - (1,\sqrt{3})| = 1$, this can only be an embedding if $x' = (0,\sqrt{3})$.  However, this is impossible since it would lead to two points, namely $x'$ and $(3/2,\sqrt{3}/2)$, with three common unit-distance neighbors.  Hence, there is no embedding with $|x' - y| = |x' - z| = 1$, and we conclude that $F(9,15,32)$ is forbidden.
\end{proof}

\begin{lem}
The following graph is forbidden:
\begin{center}
\begin{tikzpicture}
\node[label=left:{$0$}, name=1] at (0,0) {};
\node[name=2] at (1,0) {};
\node[name=3] at (1/2,{sqrt(3)/2}) {};
\node[name=4] at (3/2,{sqrt(3)/2}) {};
\node[name=5] at (2,0) {};
\node[name=6] at (1/2,{1 + sqrt(3)/2}) {};
\node[name=7] at (3/2,{1 + sqrt(3)/2}) {};
\node[name=8] at (2,5/4) {};
\node[name=9] at (0,5/4) {};
\draw (1) -- (2) -- (3) -- (4) -- (5) -- (2) -- (4);
\draw (1) -- (3) -- (6) -- (7) -- (4);
\draw (6) -- (8) -- (9) -- (7);
\draw (8) -- (5);
\draw (9) -- (1);
\end{tikzpicture}

$F(9,15,33)$
\end{center}

\end{lem}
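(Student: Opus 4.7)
The plan is to exploit the rigid subgraph on vertices $1$ through $5$ together with the two rhombi on $\{3,4,7,6\}$ and $\{6,7,9,8\}$ to reduce the question to whether some point can lie at unit distance from two points that are distance $3$ apart.

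First, I would fix coordinates for the rigid subgraph on $\{1,2,3,4,5\}$: the triangles on $\{1,2,3\}$, $\{2,3,4\}$, and $\{2,4,5\}$ share the edges $2$-$3$ and $2$-$4$, and the alternative apex choices are ruled out by the edges $1$-$3$ and $4$-$5$. We may take $1 = (0,0)$, $2 = (1,0)$, $3 = (1/2, \sqrt{3}/2)$, $4 = (3/2, \sqrt{3}/2)$, and $5 = (2,0)$.

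Next, I would apply \cref{triangleRhombusLemma}(ii) to each of the $4$-cycles $3$-$6$-$7$-$4$ and $6$-$7$-$9$-$8$. Since each must embed as a non-degenerate rhombus, opposite edges are equal as vectors when oriented consistently around the cycle: $7 - 6 = 4 - 3 = (1,0)$ from the first rhombus, and $9 - 8 = 7 - 6 = (1,0)$ from the second. Hence $9 = 8 + (1,0)$, so $|9 - 1| = 1$ becomes $|8 - (-1,0)| = 1$. Combined with the edge constraint $|8 - 5| = |8 - (2,0)| = 1$, the point $8$ must lie at unit distance from both $(-1,0)$ and $(2,0)$; but these are distance $3$ apart, so no such $8$ exists.

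The main subtlety is in upgrading \cref{triangleRhombusLemma}(ii) from parallelism to the directed equalities above: ruling out the ``reversed'' configuration $6 - 3 = 4 - 7$ for the first rhombus requires a short calculation showing that, together with $|6 - 7| = 1$, it forces $6 = 4$ and $7 = 3$, contradicting the distinctness of embedded vertices. An analogous observation handles the second rhombus, after which the coordinate computation above completes the proof.
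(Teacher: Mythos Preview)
Your proposal is correct and follows essentially the same approach as the paper's proof: fix the rigid five-vertex subgraph, use the two rhombi $3\text{-}6\text{-}7\text{-}4$ and $6\text{-}7\text{-}9\text{-}8$ to deduce that vertices $8$ and $9$ differ by the vector $(1,0)$, and then observe that the remaining edge constraints force a common unit-distance neighbor of two points at distance $3$. The only cosmetic difference is that the paper expresses the contradiction via vertex $9$ (as a common neighbor of $(0,0)$ and $(3,0)$) rather than via vertex $8$, and it cites \cref{triangleRhombusLemma} for the vector equality without spelling out the ``reversed'' case that you correctly note must be excluded by distinctness of vertices.
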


\begin{proof}

Fixing some coordinates for a unit-distance subgraph of $F(9,15,33)$, we consider the following class of embeddings:

\begin{center}
\begin{tikzpicture}
\node[label=left:{$(0,0)$}, name=1] at (0,0) {};
\node[name=2] at (1,0) {};
\node[name=3] at (1/2,{sqrt(3)/2}) {};
\node[name=4,label=right:{$(3/2,\sqrt{3}/2)$}] at (3/2,{sqrt(3)/2}) {};
\node[name=5] at (2,0) {};
\node[name=6] at (1/2,{1 + sqrt(3)/2}) {};
\node[name=7] at (3/2,{1 + sqrt(3)/2}) {};
\node[name=8,label=above:{$x$}] at (1/2,{2 + sqrt(3)/2}) {};
\node[name=9,label=above:{$y$}] at (3/2,{2 + sqrt(3)/2}) {};
\draw (1) -- (2) -- (3) -- (4) -- (5) -- (2) -- (4);
\draw (1) -- (3) -- (6) -- (7) -- (4);
\draw (6) -- (8) -- (9) -- (7);
\end{tikzpicture}
\end{center}
Then $F(9,15,33)$ is unit-distance only if we can arrange for $|x - (2,0)| = 1$ and $|y| = 1$.  From \cref{triangleRhombusLemma}, we see that $y = x + (1,0)$, so we are considering whether it is possible for both $|y| = 1$ and $|y - (3,0)| = 1$.  Since the origin and $(3,0)$ share no common unit-distance neighbors, it follows that $F(9,15,33)$ is forbidden.
\end{proof}

\begin{lem}\label{n9-last}
The following graph is forbidden:
\begin{center}
\begin{tikzpicture}
\node[name=1] at (2,0) {};
\node[name=2] at (1,0) {};
\node[name=3] at (3/2,{sqrt(3)/2}) {};
\node[name=4] at (1/2,{sqrt(3)/2}) {};
\node[name=5] at (1/2,{1 + sqrt(3)/2}) {};
\node[name=6] at (3/2,{1 + sqrt(3)/2}) {};
\draw (2) -- (3) -- (1) -- (2) -- (4) -- (3);
\draw (3) -- (5) -- (6) -- (4);
\node[name=7] at (1, {1 + sqrt(3)}) {};
\node[name=8] at (2, {1 + sqrt(3)}) {};
\draw (5) -- (7) -- (6) (8) -- (7);
\node[name=9] at (5/2, {1 + sqrt(3)/2}) {};
\draw (6) -- (9) -- (8);
\draw (8) -- (1) -- (9);
\end{tikzpicture}

$F(9,15,34)$
\end{center}
\end{lem}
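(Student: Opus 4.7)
The plan is to assume, for contradiction, that $F(9,15,34)$ admits a unit-distance embedding and to derive a vertex collision in every case. First, I will use the rigid subgraph on $\{1,2,3,4\}$ (two equilateral triangles sharing edge $2$-$3$) to fix coordinates, identifying $\mathbf{R}^2$ with $\mathbf{C}$ and setting $p_2 = 0$, $p_3 = 1$, $p_1 = e^{-i\pi/3}$, $p_4 = e^{i\pi/3}$. The assignment of $p_1$ versus $p_4$ is forced since vertex $1$ has degree $4$ while vertex $4$ has degree $3$, and the residual reflection is absorbed without loss of generality. Applying \cref{triangleRhombusLemma}(ii) to the 4-cycles $3$-$5$-$6$-$4$ and $6$-$7$-$8$-$9$ (both rhombi in the graph) then produces unit vectors $v$ and $d$ with $p_5 = 1 + v$, $p_6 = e^{i\pi/3} + v$, $p_8 = p_7 + d$, and $p_9 = p_6 + d$. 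The equilateral triangle $5$-$6$-$7$ leaves two choices for $p_7$: Case A with $p_7 = v$, or Case B with $p_7 = 1 + e^{i\pi/3} + v$.

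Next, I will impose that the triangle $1$-$8$-$9$ is equilateral. The side $|p_8 - p_9| = |p_7 - p_6| = 1$ holds automatically, so only $|p_1 - p_8| = 1$ and $|p_1 - p_9| = 1$ remain. Setting $w := v + d$, each of these becomes a unit-circle equation in $w$. In both Cases A and B the pair of center points turns out to be exactly unit distance apart, so at most two candidates for $w$ arise. A direct computation will give $w \in \{e^{-2\pi i/3},\, 2e^{-i\pi/3}\}$ in Case A, and $w \in \{e^{-2\pi i/3},\, -\tfrac{1}{2} - \tfrac{3i\sqrt{3}}{2}\}$ in Case B.

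The final step is to rule out each candidate. Since $w$ is a sum of two unit vectors, $|w| \le 2$, which immediately eliminates the Case B candidate of modulus $\sqrt{7}$. For the remaining candidates, solving $v + d = w$ under $|v| = |d| = 1$ forces $v \in \{e^{-i\pi/3}, -1\}$ in every instance. But $v = e^{-i\pi/3}$ gives $p_6 = e^{i\pi/3} + e^{-i\pi/3} = 1 = p_3$, and $v = -1$ gives $p_5 = 1 + v = 0 = p_2$; either produces a vertex collision contradicting the assumption that we have a genuine embedding. The main obstacle is the bookkeeping across the two cases for $p_7$ and the two $w$-candidates inside each, but phrasing the entire argument in complex coordinates keeps the algebra tractable and the case analysis transparent.
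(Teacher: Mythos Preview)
Your argument is correct and follows essentially the same route as the paper: fix the rigid diamond on $\{1,2,3,4\}$, parametrize the two rhombi via \cref{triangleRhombusLemma}(ii), split into two cases for the position of vertex $7$, and impose the equilateral triangle on $\{1,8,9\}$ to force contradictions. The paper works directly with $x = p_8$ and closes each case by a common-neighbor distance obstruction, whereas you package the computation through $w = v+d$ and finish with explicit vertex collisions; this is only a cosmetic difference in bookkeeping.
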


\begin{proof}
Fixing some coordinates for a unit-distance subgraph of $F(9,15,34)$, we consider the following two classes of embeddings:

\begin{center}
\begin{tikzpicture}
\node[label=left:{$(0,0)$}, name=1] at (0,0) {};
\node[name=2] at (1,0) {};
\node[name=3] at (1/2,{sqrt(3)/2}) {};
\node[name=4,label=right:{$(3/2,\sqrt{3}/2)$}] at (3/2,{sqrt(3)/2}) {};
\node[name=5] at (1/2,{1 + sqrt(3)/2}) {};
\node[name=6] at (3/2,{1 + sqrt(3)/2}) {};
\draw (2) -- (3) -- (1) -- (2) -- (4) -- (3);
\draw (3) -- (5) -- (6) -- (4);
\node[name=7] at (1, {1 + sqrt(3)}) {};
\node[label=right:{$x$},name=8] at (2, {1 + sqrt(3)}) {};
\draw (5) -- (7) -- (6) (8) -- (7);
\node[label=right:{$y$},name=9] at (5/2, {1 + sqrt(3)/2}) {};
\draw (6) -- (9) -- (8);
\node[name=10] at (1,{sqrt(3)}) {};
\draw[dashed] (3) -- (10) -- (4);
\draw[dashed] (10) -- (7);
\node[fill=white,minimum size=.4em] at (1,{sqrt(3)}) {};
\end{tikzpicture}
\hspace{1em}
\begin{tikzpicture}
\node[label=left:{$(0,0)$}, name=1] at (0,0) {};
\node[name=2,label=right:{$(1,0)$}] at (1,0) {};
\node[name=3] at (1/2,{sqrt(3)/2}) {};
\node[name=4] at (3/2,{sqrt(3)/2}) {};
\node[name=5] at (1/2,{1 + sqrt(3)/2}) {};
\node[name=6] at (3/2,{1 + sqrt(3)/2}) {};
\draw (2) -- (3) -- (1) -- (2) -- (4) -- (3);
\draw (3) -- (5) -- (6) -- (4);
\node[name=7] at (1, {1}) {};
\node[label=right:{$x'$},name=8] at (2, 1) {};
\draw (5) -- (7) -- (6) (8) -- (7);
\node[label=right:{$y'$},name=9] at (5/2, {1 + sqrt(3)/2}) {};
\draw[dashed] (7) -- (2);
\draw (6) -- (9) -- (8);
\end{tikzpicture}
\end{center}
Then $F(9,15,34)$ is unit-distance only if we can arrange for $|x| = |y| = 1$ or $|x'| = |y'| = 1$.  From \cref{triangleRhombusLemma}, observe that $y = x + (1/2,-\sqrt{3}/2)$ and $y' = x' + (1/2,\sqrt{3}/2)$, respectively.

If we suppose $|x| = 1$ and $|x + (1/2,-\sqrt{3}/2)| = 1$, the only way for this to be an embedding is with $x = (-1,0)$.  However, in this case, it would be impossible for $x$ to share a common unit-distance neighbor with $(1,\sqrt{3})$.

If we suppose instead $|x'| = 1$ and $|x' + (1/2,\sqrt{3}/2)|$, then either $x' = (-1,0)$ or $x' = (1/2,-\sqrt{3}/2)$.  Observe that $x' \neq (-1,0)$, since otherwise $(-1,0)$ and $(1,0)$ would be distance 2 apart with two common unit-distance neighbors.  Similarly, $x' \neq (1/2,-\sqrt{3}/2)$ since $y'$ and $(1,0)$ must be distinct.  In any case, we see that $F(9,15,34)$ is forbidden.  Note that $F(9,15,34)$ also appears on the right in \cref{mysteryFigure}.
\end{proof}

\begin{thm}\label{n9theorem}
The set of minimal forbidden graphs on 9 vertices is given by
\[
\mathcal{F}_9 := \{F(9,13,i) : 1 \leq i \leq 2\} \cup \{F(9,14,i) : 1 \leq i \leq 19\} \cup \{F(9,15,i) : 1 \leq i \leq 34\}.
\]
\end{thm}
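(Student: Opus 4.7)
The plan is to follow the same structural template as the proof of \cref{n8theorem}. First, I would invoke Lemmas~\ref{n9-first} through \ref{n9-last} to conclude that every graph in $\mathcal{F}_9$ is forbidden. I would then use \texttt{SageMath} to perform a direct combinatorial check (on the 55 listed graphs) that no element of $\mathcal{F}_9$ properly contains a subgraph isomorphic to any element of $\mathcal{F}_{\leq 7} \cup \mathcal{F}_8 \cup \mathcal{F}_9$, which establishes that $\mathcal{F}_9$ consists only of minimal forbidden graphs. Setting $\mathcal{F}_{\leq 9} := \mathcal{F}_{\leq 8} \cup \mathcal{F}_9$, it remains to prove that every $\mathcal{F}_{\leq 9}$-free graph on 9 vertices is unit-distance.

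As in the 8-vertex argument, I would reduce to the biconnected case: a graph is unit-distance iff each of its biconnected components is, so after disposing of disconnected and cut-vertex cases by induction on the number of vertices (handled by $\mathcal{F}_{\leq 8}$-freeness of proper subgraphs), it suffices to embed every biconnected $\mathcal{F}_{\leq 9}$-free graph on 9 vertices. I would generate all biconnected graphs on 9 vertices with \texttt{nauty}, filter with \texttt{SageMath} to the much smaller sublist of $\mathcal{F}_{\leq 9}$-free ones, and then exhibit explicit unit-distance embeddings. By analogy with the construction of $G_{27}$ in \cref{g27}, the main tactic is to build one or a few large "universal" embedded unit-distance graphs, each of which contains all but finitely many of the filtered 9-vertex graphs as subgraphs; the coordinates of these universal graphs would be tabulated in Appendix~B and the subgraph containment verified in \texttt{SageMath}.

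The main obstacle is the handful of stubborn 9-vertex graphs that resist this universal-graph approach — precisely the reason the introduction mentions needing cylindrical algebraic decomposition in \texttt{Mathematica} to produce the embeddings shown in \cref{uglyFigure}. For each such exceptional graph, I would set up the polynomial system expressing the unit-distance constraints after fixing coordinates for a rigid subgraph, then apply CAD to produce an explicit real solution, and record its coordinates in Appendix~B. A secondary obstacle is bookkeeping: ensuring that the enumeration is correct (nothing missed, no duplicates up to isomorphism), and that the 55 candidate graphs I claim to be minimal forbidden really do cover every biconnected 9-vertex forbidden graph — both of these are verified by a canonical-form check in \texttt{SageMath} against the \texttt{nauty} list.

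Assuming all of the above succeeds, every biconnected $\mathcal{F}_{\leq 9}$-free graph on 9 vertices is realized as a subgraph of an explicitly embedded unit-distance graph, hence is unit-distance; combined with the preceding lemmas this gives the claimed equality $\mathcal{F}_9 = \{F(9,13,i) : 1 \leq i \leq 2\} \cup \{F(9,14,i) : 1 \leq i \leq 19\} \cup \{F(9,15,i) : 1 \leq i \leq 34\}$, and \cref{introTheorem} then follows by combining with \cref{n8theorem} and the result of Chilakamarri and Mahoney.
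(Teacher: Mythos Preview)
Your proposal is correct and mirrors the paper's own proof almost exactly: invoke \cref{n9-first}--\cref{n9-last} for forbiddenness, check minimality, reduce to biconnected graphs, enumerate with \texttt{nauty}, filter the $\mathcal{F}_{\leq 9}$-free ones in \texttt{SageMath}, embed all but a few into an enlarged universal unit-distance graph (the paper extends $G_{27}$ to a $G_{118}$), and finish off the two holdouts $H_1,H_2$ of \cref{uglyFigure} via cylindrical algebraic decomposition in \texttt{Mathematica}. The only details you are missing are the specific counts ($194{,}066$ biconnected graphs, $2984$ after filtering, $275$ not already in $G_{27}$, exactly two exceptional graphs), which are byproducts of actually running the computation.
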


\begin{proof}
Lemmas \ref{n9-first} through \ref{n9-last}  establish that every graph contained in $\mathcal{F}_9$ is forbidden.  Moreover, no graph in $\mathcal{F}_9$ contains a proper subgraph isomorphic to any graph in $\mathcal{F}_{\leq 8}$ or $\mathcal{F}_9$.  Setting $
\mathcal{F}_{\leq 9} := \mathcal{F}_{\leq 8} \cup \mathcal{F}_9$, it suffices to show that every $\mathcal{F}_{\leq 9}$-free biconnected graph on 9 vertices is unit-distance.

\begin{figure}[t!]
\begin{center}
\begin{tabular}{ccc}
\begin{tikzpicture}[scale=2]
\node[name=1] at (0,0) {};
\node[name=2] at (1,0) {};
\node[name=3] at (2,0) {};
\node[name=4] at (1/2,{sqrt(3)/2}) {};
\node[name=5] at (3/2,{sqrt(3)/2}) {};
\node[name=6] at (.62836,.375878) {};
\node[name=7] at (1.72803,-.962305) {};
\node[name=8] at (.791893,-0.61066) {};
\node[name=9] at (1.56449,0.0242322) {};
\draw (1) -- (2) -- (3) -- (5) -- (4) -- (1);
\draw (4) -- (2) -- (5);
\draw (5) -- (6) -- (8) -- (7) -- (9) -- (6) (8) -- (9);
\draw (1) -- (8) (3) -- (7);
\end{tikzpicture}
&
\hspace{3em}
&
\begin{tikzpicture}[scale=2]
\node[name=1] at (0,0) {};
\node[name=2] at ({sqrt(3)/2},1/2) {};
\node[name=3] at ({sqrt(3)/2},-1/2) {};
\node[name=4] at ({sqrt(3)},0) {};
\node[name=5] at (0.332674,-0.943042) {};
\node[name=6] at (0.983035,-0.183417) {};
\node[name=7] at ({sqrt(3) - 0.399726},-0.916635) {};
\node[name=8] at ({sqrt(3) - 0.593966}, 0.80449) {};
\node[name=9] at (0.738359,-0.112145) {};
\draw (1) -- (2) -- (4) -- (3) -- (1) (2) -- (3);
\draw (4) -- (8) -- (9) -- (7) -- (4);
\draw (1) -- (5) -- (6) -- (1);
\draw (5) -- (7) (6) -- (8) (4) -- (9);
\end{tikzpicture}\\
$H_1$ & & $H_2$
\end{tabular}
\end{center}
\caption{$H_1$ and $H_2$ are the last two graphs that we verify to be unit-distance; $H_2$ also appears on the left in \cref{mysteryFigure}.  Exact coordinates for the vertices of the pictured embeddings are reported in \cref{h1table,h2table} in Appendix B.}\label{uglyFigure}
\end{figure}
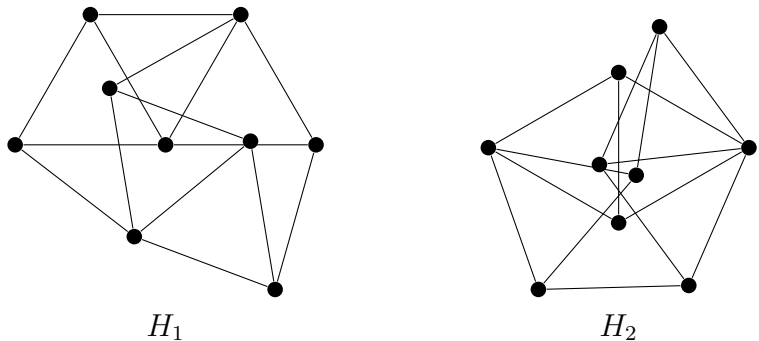

We use \texttt{nauty} to generate the set of all 194,066 biconnected graphs on 9 vertices.  \texttt{SageMath} computes that only 2984 of these graphs are $\mathcal{F}_{\leq 9}$-free, and all but 275 of these are subgraphs of the unit-distance graph $G_{27}$ that we constructed for the proof of \cref{n8theorem}.  For some of these remaining 275 graphs, we explicitly compute several coordinates.  We proceed to add new vertices to $G_{27}$ using the same procedure described in the proof of \cref{n8theorem} that we used to construct $G_{27}$.   In total, we add 91 vertices to $G_{27}$ to construct an embedded unit-distance graph $G_{118}$, and we report these additional vertices in \cref{g118table}.  \texttt{SageMath} verifies that there are exactly two $\mathcal{F}_{\leq 9}$-free biconnected graphs on 9 vertices, which we label $H_1$ and $H_2$ and depict in \cref{uglyFigure}, that are not subgraphs of $G_{118}$.  We were unable to produce exact coordinates for $H_1$ and $H_2$ using \texttt{SageMath}.  However, using the implementation of cylindrical algebraic decomposition~\cite{collins75} within \texttt{Mathematica}, we found exact coordinates for these graphs and report them in \cref{h1table,h2table}.
\end{proof}

\section*{Acknowledgements}

The authors are grateful to Dustin G. Mixon for several helpful discussions and Eric Weisstein for correcting inconsistencies between Appendix A and our \texttt{SageMath} worksheet.

\bibliographystyle{abbrv}
\bibliography{smallUD}

\newpage

\section*{Appendix A: Minimal forbidden graphs}

We collect here all 74 minimal forbidden graphs on up to 9 vertices.  The set of minimal forbidden graphs on up to 7 vertices was proved to be complete by Chilakamarri and Mahoney~\cite{chilakamarri95}.  Their work together with \cref{n8theorem,n9theorem} proves that this set of 74 minimal forbidden graphs on up to 9 vertices is complete.  These graphs can also be found in the \texttt{SageMath} worksheet that we have made available~\cite{smallUDcode}.

\begin{center}
\begin{tabular}{cccccc}
\begin{tikzpicture}
\node[name=1] at (0,0) {};
\node[name=2] at (1,0) {};
\node[name=3] at (1/2,{sqrt(3)/2}) {};
\node[name=4] at (3/2,{sqrt(3)/2}) {};
\draw (1) --(3) -- (2) -- (4) -- (1) -- (2) (3) -- (4);
\end{tikzpicture}
&
\begin{tikzpicture}
\node[name=1] at (0,0) {};
\node[name=2] at (1,0) {};
\node[name=3] at (1/2,{sqrt(3)/2}) {};
\node[name=4] at (3/2,{sqrt(3)/2}) {};
\node[name=5] at (3/4,{sqrt(3)/4}) {};
\draw (1) -- (2) -- (4) -- (3) -- (1) -- (5) -- (4);
\end{tikzpicture}
&
\begin{tikzpicture}
\node[name=1] at (0,0) {};
\node[name=2] at (1,0) {};
\node[name=3] at (2,0) {};
\node[name=4] at (1/2,{sqrt(3)/2}) {};
\node[name=5] at (3/2,{sqrt(3)/2}) {};
\node[name=6] at (1,-1/2) {};
\draw (4) -- (1) -- (2) -- (3) -- (5) -- (4) -- (2) -- (5) (1) -- (6) -- (3);
\end{tikzpicture}
&
\begin{tikzpicture}
\node[name=1] at (0,0) {};
\node[name=2] at (1,0) {};
\node[name=3] at (1/2,{sqrt(3)/2}) {};
\node[name=4] at (0,1) {};
\node[name=5] at (1,1) {};
\node[name=6] at (1/2,{1 + sqrt(3)/2}) {};
\node[name=7] at ({-sqrt(3)/2},1/2) {};
\draw (6) -- (4) -- (1) -- (2) -- (3) -- (6) -- (5) -- (2) (1) -- (3) (4) -- (7) -- (2);
\end{tikzpicture}
&
\begin{tikzpicture}
\node[name=1] at (0,0) {};
\node[name=2] at (1,0) {};
\node[name=3] at (2,0) {};
\node[name=4] at (1/2,{sqrt(3)/2}) {};
\node[name=5] at (3/2,{sqrt(3)/2}) {};
\node[name=6] at (5/2,{sqrt(3)/2}) {};
\node[name=7] at (5/2,-1/2) {};
\draw (4) -- (1) -- (2) -- (3) -- (5) -- (4) -- (2) -- (5) -- (6) -- (3);
\draw (1) -- (7) -- (6);
\end{tikzpicture}
&
\begin{tikzpicture}
\node[name=1] at (0,0) {};
\node[name=2] at (0,{sqrt(3)}) {};
\node[name=3] at (-1/2,{sqrt(3)/2}) {};
\node[name=4] at (1/2,{sqrt(3)/2}) {};
\node[name=5] at (3/2,{sqrt(3)/2}) {};
\node[name=6] at (1,0) {};
\node[name=7] at (1,{sqrt(3)}) {};
\draw (3) -- (1) -- (4) -- (2) -- (3) -- (4) -- (5) -- (7) -- (6) -- (1) (7) -- (2) (5) -- (6);
\end{tikzpicture}\\
$F(4,6,1)$ & $F(5,6,1)$ & $F(6,9,1)$ & $F(7,10,1)$ & $F(7,11,1)$ & $F(7,11,2)$
\end{tabular}
\vspace{1em}

\begin{tabular}{ccccc}
\begin{tikzpicture}
\node[name=1] at (0,0) {};
\node[name=2] at (1,0) {};
\node[name=3] at (1/2,{sqrt(3)/2}) {};
\node[name=4] at (0,1) {};
\node[name=5] at (1,1) {};
\node[name=6] at (1/2,{1 + sqrt(3)/2}) {};
\node[name=7] at (1,3/2) {};
\node[name=8] at (-1/2,{1 + sqrt(3)/2}) {};
\draw (6) -- (4) -- (1) -- (2) -- (3) -- (6) -- (5) -- (2) (1) -- (3);
\draw (4) -- (7) -- (6) -- (8) -- (4);
\end{tikzpicture}
&
\begin{tikzpicture}
\node[name=1] at (0,0) {};
\node[name=2] at (1,0) {};
\node[name=3] at (1/2,{sqrt(3)/2}) {};
\node[name=4] at (0,1) {};
\node[name=5] at (1,1) {};
\node[name=6] at (1/2,{1 + sqrt(3)/2}) {};
\node[name=7] at (1,3/2) {};
\node[name=8] at (-1/2,{1 + sqrt(3)/2}) {};
\draw (6) -- (4) -- (1) -- (2) -- (3) -- (6) -- (5) -- (2) (1) -- (3);
\draw (6) -- (8) -- (4);
\draw (8) -- (7) -- (5);
\end{tikzpicture}
&
\begin{tikzpicture}
\node[name=1] at (0,0) {};
\node[name=2] at (0,1) {};
\node[name=3] at (1/2,{sqrt(3)/2}) {};
\node[name=4] at (1/2,{sqrt(3)/2 + 1}) {};
\node[name=5] at (3/2,{sqrt(3)/2}) {};
\node[name=6] at (3/2,{sqrt(3)/2 + 1}) {};
\node[name=7] at (2,0) {};
\node[name=8] at (2,1) {};
\draw (1) -- (2) -- (4) -- (3) -- (1);
\draw (3) -- (5) -- (7) -- (8) -- (6) -- (4);
\draw (5) -- (6);
\draw (1) -- (8);
\draw (2) -- (7);
\end{tikzpicture}
&
\begin{tikzpicture}
\node[name=1] at (0,0) {};
\node[name=2] at (1,0) {};
\node[name=3] at (1/2,{sqrt(3)/2}) {};
\node[name=4] at (3/2,{sqrt(3)/2}) {};
\node[name=5] at (-1/2,{sqrt(3)/2}) {};
\node[name=6] at (0,{sqrt(3)}) {};
\node[name=7] at (2,0) {};
\node[name=8] at (1,1) {};
\draw (5) -- (1) (3) -- (1) -- (2) -- (3) -- (5) -- (6) -- (3) (2) -- (4) -- (3);
\draw (4) -- (7) -- (2);
\draw (7) -- (8) -- (6);
\end{tikzpicture}
&
\begin{tikzpicture}
\node[name=1] at (0,0) {};
\node[name=2] at (1,0) {};
\node[name=3] at (2,0) {};
\node[name=4] at (1/2,{sqrt(3)/2}) {};
\node[name=5] at (3/2,{sqrt(3)/2}) {};
\node[name=6] at (5/2,{sqrt(3)/2}) {};
\node[name=7] at (3,0) {};
\node[name=8] at (3/2,{sqrt(3)/4}) {};
\draw (4) -- (1) -- (2) -- (3) -- (5) -- (4) -- (2) -- (5) -- (6) -- (3);
\draw (3) -- (7) -- (6);
\draw (1) -- (8) -- (7);
\end{tikzpicture}\\
$F(8,12,1)$ & $F(8,12,2)$ & $F(8,12,3)$ & $F(8,13,1)$ & $F(8,13,2)$\\
\\

\begin{tikzpicture}
\node[name=1] at (0,0) {};
\node[name=2] at (1,0) {};
\node[name=3] at (1/2,{sqrt(3)/2}) {};
\node[name=4] at (3/2,{sqrt(3)/2}) {};
\node[name=5] at (-1/2,{sqrt(3)/2}) {};
\node[name=6] at (0,{sqrt(3)}) {};
\node[name=7] at (1,{sqrt(3)}) {};
\node[name=8] at (3/2,{sqrt(3)}) {};
\draw (5) -- (1) (3) -- (1) -- (2) -- (3) -- (5) -- (6) -- (7) -- (3) -- (6) (2) -- (4) -- (3) -- (8) -- (4);
\end{tikzpicture}
&
\begin{tikzpicture}
\node[name=1] at (0,0) {};
\node[name=2] at (1,0) {};
\node[name=3] at (1/2,{sqrt(3)/2}) {};
\node[name=4] at (3/2,{sqrt(3)/2}) {};
\node[name=5] at (-1/2,{sqrt(3)/2}) {};
\node[name=6] at (0,{sqrt(3)}) {};
\node[name=7] at (1,{sqrt(3)}) {};
\node[name=8] at (2,{sqrt(3)/2}) {};
\draw (5) -- (1) (3) -- (1) -- (2) -- (3) -- (5) -- (6) -- (7) -- (3) -- (6) (2) -- (4) -- (3);
\draw (2) -- (8) -- (7);
\end{tikzpicture}
&
\begin{tikzpicture}
\node[name=1] at (0,0) {};
\node[name=2] at (1,0) {};
\node[name=3] at (1/2,{sqrt(3)/2}) {};
\node[name=4] at (3/2,{sqrt(3)/2}) {};
\node[name=5] at (-1/2,{sqrt(3)/2}) {};
\node[name=6] at (0,{sqrt(3)}) {};
\node[name=7] at (1,{sqrt(3)}) {};
\node[name=8] at (1,{sqrt(3)/4}) {};
\draw (1) -- (8) -- (4);
\draw (5) -- (1) (3) -- (1) -- (2) -- (3) -- (5) -- (6) -- (7) -- (3) -- (6) (7) -- (4) -- (2);
\end{tikzpicture}
&
\begin{tikzpicture}
\node[name=1] at (0,0) {};
\node[name=2] at (1,0) {};
\node[name=3] at (1/2,{sqrt(3)/2}) {};
\node[name=4] at (3/2,{sqrt(3)/2}) {};
\node[name=5] at (-1/2,{sqrt(3)/2}) {};
\node[name=6] at (0,{sqrt(3)}) {};
\node[name=7] at (1,{sqrt(3)}) {};
\node[name=8] at (1/2,{sqrt(3)/2 - 1/2}) {};
\draw (5) -- (8) -- (4);
\draw (5) -- (1) (3) -- (1) -- (2) -- (3) -- (5) -- (6) -- (7) -- (3) -- (6) (7) -- (4) -- (2);
\end{tikzpicture}
&
\begin{tikzpicture}
\node[name=1] at (0,0) {};
\node[name=2] at (1,0) {};
\node[name=3] at (1/2,{sqrt(3)/2}) {};
\node[name=4] at (3/2,{sqrt(3)/2}) {};
\node[name=5] at (0,1) {};
\node[name=6] at (1,1) {};
\node[name=7] at (1/2,{sqrt(3)/2 + 1}) {};
\node[name=8] at (3/2,{sqrt(3)/2 + 1}) {};
\draw (3) -- (1) -- (2) -- (4) -- (3) -- (2);
\draw (7) -- (5) -- (6) -- (8) -- (7);
\draw (1) -- (5);
\draw (3) -- (7);
\draw (4) -- (8);
\draw (1) -- (6);
\end{tikzpicture}\\
$F(8,13,3)$ & $F(8,13,4)$ & $F(8,13,5)$ & $F(8,13,6)$ & $F(8,13,7)$\\
\\

\begin{tikzpicture}
\node[name=1] at (0,0) {};
\node[name=2] at (1,0) {};
\node[name=3] at (1/2,{sqrt(3)/2}) {};
\node[name=4] at (3/2,{sqrt(3)/2}) {};
\node[name=5] at (-1/2,{sqrt(3)/2}) {};
\node[name=6] at ({sqrt(3)/2 - 1/2}, {1/2 + sqrt(3)/2}) {};
\node[name=7] at (-1/2,{1 + sqrt(3)/2}) {};
\node[name=8] at ({-1/2 + sqrt(3)/2},{3/2 + sqrt(3)/2}) {};
\draw (1) -- (2) -- (4) -- (3) -- (5) -- (1) -- (3) -- (2);
\draw (5) -- (6) -- (8) -- (7) -- (5);
\draw (6) -- (7) (8) -- (4);
\end{tikzpicture}
&
\begin{tikzpicture}
\node[name=1] at (0,0) {};
\node[name=2] at (1,0) {};
\node[name=3] at (1/2,{sqrt(3)/2}) {};
\node[name=4] at (0,1) {};
\node[name=5] at (1,1) {};
\node[name=6] at (1/2,{1 + sqrt(3)/2}) {};
\node[name=7] at (2,0) {};
\node[name=8] at (3/2,{sqrt(3)/2}) {};
\draw (6) -- (4) -- (1) -- (2) -- (3) -- (6) -- (5) (1) -- (3);
\draw (4) -- (5);
\draw (8) -- (2) -- (7) -- (8) -- (3);
\draw (5) -- (7);
\end{tikzpicture}
&
\begin{tikzpicture}
\node[name=1] at (0,0) {};
\node[name=2] at (1,0) {};
\node[name=3] at (1/2,{sqrt(3)/2}) {};
\node[name=4] at (3/2,{sqrt(3)/2}) {};
\node[name=5] at (1/2,{sqrt(3)/2 + 1}) {};
\node[name=6] at (3/2,{sqrt(3)/2 + 1}) {};
\node[name=7] at (1,1) {};
\node[name=8] at (2,1) {};
\draw (1) -- (2) -- (4) -- (3) -- (2);
\draw (1) -- (3) -- (5) -- (6) -- (4);
\draw (5) -- (7) -- (8) -- (6) -- (7);
\draw (1) -- (8);
\end{tikzpicture}
&
\begin{tikzpicture}
\node[name=1] at (0,0) {};
\node[name=2] at (1,0) {};
\node[name=3] at (1/2,{sqrt(3)/2}) {};
\node[name=4] at (0,1) {};
\node[name=5] at (1,1) {};
\node[name=6] at (1/2,{1 + sqrt(3)/2}) {};
\draw (6) -- (4) -- (1) -- (2) -- (3) -- (6) -- (5) -- (2) (1) -- (3);
\node[name=7] at (0,2.2) {};
\node[name=8] at (1,2.2) {};
\node[name=9] at (3/2,1.75) {};
\draw (4) -- (7) -- (8) -- (5);
\draw (5) -- (9) -- (7);
\end{tikzpicture} 
&
\begin{tikzpicture}
\node[name=1] at (0,0) {};
\node[name=2] at (1,0) {};
\node[name=3] at (1,1) {};
\node[name=4] at (0,1) {};
\node[name=5] at (1/2,{1 + sqrt(3)/2}) {};
\node[name=6] at (3/2,{1 + sqrt(3)/2}) {};
\node[name=7] at (3/2,{sqrt(3)/2}) {};
\node[name=8] at (-1,1/2) {};
\node[name=9] at (0,-1/2) {};
\draw (1) -- (2) -- (3) -- (4) -- (1) -- (8) -- (5) -- (6) -- (7) -- (2);
\draw (3) -- (6) (4) -- (5) (2) -- (9) -- (8);
\end{tikzpicture}\\ 

$F(8,13,8)$ & $F(8,13,9)$ & $F(8,13,10)$ & $F(9,13,1)$ & $F(9,13,2)$\\
\\

\begin{tikzpicture}
\node[name=1] at (0,0) {};
\node[name=2] at (1/2,{sqrt(3)/2}) {};
\node[name=3] at (1,0) {};
\node[name=4] at (-1/2,{sqrt(3)/2}) {};
\node[name=5] at (3/2,{sqrt(3)/2}) {};
\node[name=6] at (-1/2,{1 + sqrt(3)/2}) {};
\node[name=7] at (1/2,{1 + sqrt(3)/2}) {};
\node[name=8] at (3/2,{1 + sqrt(3)/2}) {};
\node[name=9] at (1/2,{3/2 + sqrt(3)/2}) {};
\draw (1) -- (2) -- (3) -- (1);
\draw (2) -- (5) -- (3);
\draw (1) -- (4) -- (2);
\draw (4) -- (6) -- (7) -- (2);
\draw (5) -- (8) -- (7);
\draw (6) -- (9) -- (8);
\end{tikzpicture}
&
\begin{tikzpicture}
\node[name=1] at (0,0) {};
\node[name=2] at (1/2,{sqrt(3)/2}) {};
\node[name=3] at (1,0) {};
\node[name=4] at (2,0) {};
\node[name=5] at (3/2,{sqrt(3)/2}) {};
\node[name=6] at (3,0) {};
\node[name=7] at (5/2,{sqrt(3)/2}) {};
\draw (1) -- (2) -- (3) -- (1);
\draw (2) -- (5) -- (3);
\draw (1) -- (4);
\draw (4) -- (5) -- (7) -- (4) -- (6) -- (7);
\node[name=8] at (1,-1/2) {};
\node[name=9] at (2,-1/2) {};
\draw (1) -- (8) -- (9) -- (6);
\end{tikzpicture}
&
\begin{tikzpicture}
\node[name=1] at (0,0) {};
\node[name=2] at (1,0) {};
\node[name=3] at (1/2,{sqrt(3)/2}) {};
\node[name=4] at (0,1) {};
\node[name=5] at (1,1) {};
\node[name=6] at (1/2,{1 + sqrt(3)/2}) {};
\draw (6) -- (4) -- (1) -- (2) -- (3) -- (6) -- (5) -- (2) (1) -- (3);
\node[name=7] at (3/2,{sqrt(3)/2}) {};
\node[name=8] at (3/2,{1 + sqrt(3)/2}) {};
\draw (3) -- (7) -- (2);
\draw (6) -- (8) -- (5);
\node[name=9] at (2,3/2) {};
\draw (6) -- (9) -- (7);
\end{tikzpicture}
&
\begin{tikzpicture}
\node[name=1] at (0,0) {};
\node[name=2] at (1,0) {};
\node[name=3] at (1/2,{sqrt(3)/2}) {};
\node[name=4] at (0,1) {};
\node[name=5] at (1,1) {};
\node[name=6] at (1/2,{1 + sqrt(3)/2}) {};
\draw (6) -- (4) -- (1) -- (2) -- (3) -- (6) -- (5) -- (2) (1) -- (3);
\node[name=7] at (1/2,1/4) {};
\node[name=8] at (3/2,{1 + sqrt(3)/2}) {};
\node[name=9] at (2,1) {};
\draw (6) -- (8) -- (9) -- (5);
\draw (4) -- (7) -- (5);
\draw (7) -- (9);
\end{tikzpicture}
&
\begin{tikzpicture}
\node[name=1] at (0,0) {};
\node[name=2] at (1,0) {};
\node[name=3] at (1/2,{sqrt(3)/2}) {};
\node[name=4] at (0,1) {};
\node[name=5] at (1,1) {};
\node[name=6] at (1/2,{1 + sqrt(3)/2}) {};
\draw (6) -- (4) -- (1) -- (2) -- (3) -- (6) -- (5) -- (2) (1) -- (3);
\node[name=7] at ({1 + sqrt(3)/2},1/2) {};
\node[name=8] at (1/2,1/4) {};
\node[name=9] at (1/2,-1/2) {};
\draw (2) -- (7) -- (5);
\draw (7) -- (8) -- (4);
\draw (2) -- (9) -- (8);
\end{tikzpicture}\\
$F(9,14,1)$ & $F(9,14,2)$ & $F(9,14,3)$ & $F(9,14,4)$ & $F(9,14,5)$
\end{tabular}
\end{center}

\newpage

\begin{center}
\begin{tabular}{ccccc}
\begin{tikzpicture}
\node[name=1] at (0,0) {};
\node[name=2] at (1,0) {};
\node[name=3] at (1/2,{sqrt(3)/2}) {};
\node[name=4] at (0,1) {};
\node[name=5] at (1,1) {};
\node[name=6] at (1/2,{1 + sqrt(3)/2}) {};
\draw (6) -- (4) -- (1) -- (2) -- (3) -- (6) -- (5) -- (2) (1) -- (3);
\node[name=7] at (3/2,{sqrt(3)/2}) {};
\node[name=8] at (3/2,{1 + sqrt(3)/2}) {};
\draw (2) -- (7) -- (3);
\draw (7) -- (8) -- (5);
\node[name=9] at (1/2,9/4) {};
\draw (4) -- (9) -- (8);
\end{tikzpicture}
&
\begin{tikzpicture}
\node[name=1] at (0,0) {};
\node[name=2] at (1,0) {};
\node[name=3] at (1/2,{sqrt(3)/2}) {};
\node[name=4] at (0,1) {};
\node[name=5] at (1,1) {};
\node[name=6] at (1/2,{1 + sqrt(3)/2}) {};
\draw (6) -- (4) -- (1) -- (2) -- (3) -- (6) -- (5) -- (2) (1) -- (3);
\node[name=7] at (3/2,{sqrt(3)/2}) {};
\node[name=8] at (3/2,{1 + sqrt(3)/2}) {};
\draw (2) -- (7) -- (3);
\draw (7) -- (8) -- (6);
\node[name=9] at (1/2,9/4) {};
\draw (4) -- (9) -- (8);
\end{tikzpicture}
&
\begin{tikzpicture}
\node[name=1] at (0,0) {};
\node[name=2] at (1,0) {};
\node[name=3] at (1/2,{sqrt(3)/2}) {};
\node[name=4] at (0,1) {};
\node[name=5] at (1,1) {};
\node[name=6] at (1/2,{1 + sqrt(3)/2}) {};
\draw (6) -- (4) -- (1) -- (2) -- (3) -- (6) -- (5) -- (2) (1) -- (3);
\node[name=7] at ({1 + sqrt(3)/2},1/2) {};
\node[name=8] at (5/4,-3/8) {};
\node[name=9] at (1/2,1/4) {};
\draw (5) -- (7) -- (2);
\draw (7) -- (8) -- (1);
\draw (7) -- (9) -- (4);
\end{tikzpicture}
&
\begin{tikzpicture}
\node[name=1] at (0,0) {};
\node[name=2] at (1,0) {};
\node[name=3] at (1/2,{sqrt(3)/2}) {};
\node[name=4] at (0,1) {};
\node[name=5] at (1,1) {};
\node[name=6] at (1/2,{1 + sqrt(3)/2}) {};
\draw (6) -- (4) -- (1) -- (2) -- (3) -- (6) -- (5) -- (2) (1) -- (3);
\node[name=7] at (3/2,{1 + sqrt(3)/2}) {};
\node[name=8] at (1,{1 + sqrt(3)}) {};
\node[name=9] at (1/8,9/4) {};
\draw (5) -- (7) -- (6) -- (8) -- (7);
\draw (8) -- (9) -- (4);
\end{tikzpicture}
&
\begin{tikzpicture}
\node[name=1] at (0,0) {};
\node[name=2] at (1,0) {};
\node[name=3] at (1/2,{sqrt(3)/2}) {};
\node[name=4] at (0,1) {};
\node[name=5] at (1,1) {};
\node[name=6] at (1/2,{1 + sqrt(3)/2}) {};
\draw (6) -- (4) -- (1) -- (2) -- (3) -- (6) -- (5) -- (2) (1) -- (3);
\node[name=7] at (3/2,{1 + sqrt(3)/2}) {};
\draw (5) -- (7) -- (6);
\node[name=8] at (-1/2,{1 + sqrt(3)/2}) {};
\node[name=9] at (1/2,5/2) {};
\draw (4) -- (8) -- (6);
\draw (8) -- (9) -- (7);
\end{tikzpicture}\\
$F(9,14,6)$ & $F(9,14,7)$ & $F(9,14,8)$ & $F(9,14,9)$ & $F(9,14,10)$\\
\\
\begin{tikzpicture}
\node[name=1] at (0,0) {};
\node[name=2] at (1,0) {};
\node[name=3] at (1/2,{sqrt(3)/2}) {};
\node[name=4] at (0,1) {};
\node[name=5] at (1,1) {};
\node[name=6] at (1/2,{1 + sqrt(3)/2}) {};
\draw (6) -- (4) -- (1) -- (2) -- (3) -- (6) -- (5) -- (2) (1) -- (3);
\node[name=7] at (3/2,{1 + sqrt(3)/2}) {};
\draw (5) -- (7) -- (6);
\node[name=8] at (3/2,{sqrt(3)/2}) {};
\draw (2) -- (8) -- (3);
\node[name=9] at (2,3/2) {};
\draw (5) -- (9) -- (8);
\end{tikzpicture}
&
\begin{tikzpicture}
\node[name=1] at (0,0) {};
\node[name=2] at (1,0) {};
\node[name=3] at (1/2,{sqrt(3)/2}) {};
\node[name=4] at (0,1) {};
\node[name=5] at (1,1) {};
\node[name=6] at (1/2,{1 + sqrt(3)/2}) {};
\draw (6) -- (4) -- (1) -- (2) -- (3) -- (6) -- (5) -- (2) (1) -- (3);
\node[name=7] at (3/2,{1 + sqrt(3)/2}) {};
\node[name=8] at (3/2,3/4) {};
\node[name=9] at (1/2,1/4) {};
\draw (5) -- (7) -- (6);
\draw (4) -- (9) -- (5);
\draw (9) -- (8) -- (7);
\end{tikzpicture}
&
\begin{tikzpicture}
\node[name=1] at (0,0) {};
\node[name=2] at (1,0) {};
\node[name=3] at (1/2,{sqrt(3)/2}) {};
\node[name=4] at (0,1) {};
\node[name=5] at (1,1) {};
\node[name=6] at (1/2,{1 + sqrt(3)/2}) {};
\draw (6) -- (4) -- (1) -- (2) -- (3) -- (6) -- (5) -- (2) (1) -- (3);
\node[name=7] at (2,-1/4) {};
\node[name=8] at (2,3/4) {};
\node[name=9] at ({5/4},1/4) {};
\draw (2) -- (7) -- (8) -- (5);
\draw (7) -- (9) -- (8) (9) -- (4);
\end{tikzpicture}
&
\begin{tikzpicture}
\node[name=1] at (0,0) {};
\node[name=2] at (1,0) {};
\node[name=3] at (1/2,{sqrt(3)/2}) {};
\node[name=4] at (0,1) {};
\node[name=5] at (1,1) {};
\node[name=6] at (1/2,{1 + sqrt(3)/2}) {};
\draw (6) -- (4) -- (1) -- (2) -- (3) -- (6) -- (5) -- (2) (1) -- (3);
\node[name=7] at ({1 + sqrt(3)/2},1/2) {};
\node[name=8] at (-1/2,1/2) {};
\node[name=9] at (0,3/2) {};
\draw (2) -- (7) -- (5);
\draw (7) -- (8) -- (1);
\draw (5) -- (9) -- (8);
\end{tikzpicture}
&
\begin{tikzpicture}
\node[name=1] at (0,0) {};
\node[name=2] at (1,0) {};
\node[name=3] at (1/2,{sqrt(3)/2}) {};
\node[name=4] at (0,1) {};
\node[name=5] at (1,1) {};
\node[name=6] at (1/2,{1 + sqrt(3)/2}) {};
\draw (6) -- (4) -- (1) -- (2) -- (3) -- (6) -- (5) -- (2) (1) -- (3);
\node[name=7] at (3/2,{1 + sqrt(3)/2}) {};
\node[name=8] at (2,1) {};
\node[name=9] at (1/2,1/4) {};
\draw (6) -- (7) -- (8) -- (5) -- (7);
\draw (4) -- (9) -- (8);
\end{tikzpicture}\\
$F(9,14,11)$ & $F(9,14,12)$ & $F(9,14,13)$ & $F(9,14,14)$ & $F(9,14,15)$\\
\\
\begin{tikzpicture}
\node[name=1] at (0,0) {};
\node[name=2] at (1,0) {};
\node[name=3] at (1/2,{sqrt(3)/2}) {};
\node[name=4] at (3/2,{sqrt(3)/2}) {};
\node[name=5] at (0,1) {};
\node[name=6] at (1,1) {};
\node[name=7] at (1/2,{sqrt(3)/2 + 1}) {};
\node[name=8] at (3/2,{sqrt(3)/2 + 1}) {};
\draw (3) -- (1) -- (2) -- (4) -- (3) -- (2);
\draw (7) -- (5) -- (6) -- (8) -- (7);
\draw (1) -- (5);
\draw (3) -- (7);
\draw (4) -- (8);
\node[name=9] at (3/2,0) {};
\draw (3) -- (9) -- (6);
\end{tikzpicture} 
&
\begin{tikzpicture}
\node[name=1] at (0,0) {};
\node[name=2] at (1,0) {};
\node[name=3] at (1/2,{sqrt(3)/2}) {};
\node[name=4] at (3/2,{sqrt(3)/2}) {};
\node[name=5] at (1,{sqrt(3)}) {};
\node[name=6] at (0,{sqrt(3)}) {};
\node[name=7] at (-1/2,{sqrt(3)/2}) {};
\node[name=8] at (1,{3*sqrt(3)/2}) {};
\node[name=9] at (0,{3*sqrt(3)/2}) {};
\draw (1) -- (2) -- (3) -- (1) (2) -- (4) -- (5) -- (3) (1) -- (7) -- (6) -- (3) (4) -- (8) -- (9) -- (5) (7) -- (9) (6) -- (8);
\end{tikzpicture} 
&
\begin{tikzpicture}
\node[name=1] at (0,0) {};
\node[name=2] at (1,0) {};
\node[name=3] at (1/2,{sqrt(3)/2}) {};
\node[name=4] at (3/2,{sqrt(3)/2}) {};
\draw (1) -- (2) -- (4) -- (3) -- (1) (2) -- (3);
\node[name=5] at (0,5/4) {};
\node[name=6] at (1,5/4) {};
\node[name=7] at (3/2,{1 + sqrt(3)/2}) {};
\node[name=8] at (1/2,{sqrt(3)/4}) {};
\node[name=9] at (1/2,{1 + sqrt(3)/2}) {};
\draw (1) -- (5) -- (6) -- (2) (4) -- (7) -- (6);
\draw (7) -- (9) -- (1);
\draw (5) -- (8) -- (4);
\end{tikzpicture} 
&
\begin{tikzpicture}
\node[name=1] at (0,0) {};
\node[name=2] at (1,0) {};
\node[name=3] at (2,0) {};
\node[name=4] at (1/2, {sqrt(3)/2}) {};
\node[name=5] at (1/2, {-sqrt(3)/2}) {};
\node[name=6] at (3/2, {sqrt(3)/2}) {};
\node[name=7] at (3/2, {-sqrt(3)/2}) {};
\draw (1) -- (4) -- (2) -- (5) -- (1);
\draw (3) -- (6) -- (2) -- (7) -- (3);
\draw (4) -- (6) (5) -- (7);
\node[name=8] at (1,{sqrt(3)/4}) {};
\node[name=9] at (2,{sqrt(3)/4}) {};
\draw (1) -- (8) -- (3);
\draw (8) -- (9) -- (2);
\end{tikzpicture} 
&
\begin{tikzpicture}
\node[name=1] at (0,0) {};
\node[name=2] at (1/2,{sqrt(3)/2}) {};
\node[name=3] at (1,0) {};
\node[name=4] at (2,{sqrt(3)}) {};
\node[name=5] at (3/2,{sqrt(3)/2}) {};
\node[name=6] at (-1/2,{sqrt(3)/2}) {};
\node[name=7] at (5/2,{sqrt(3)/2}) {};
\node[name=8] at (1,{sqrt(3)}) {};
\node[name=9] at (2,0) {};
\draw (3) -- (1);
\draw (5) -- (3);
\draw (4) -- (5) -- (7) -- (4);
\draw (1) -- (6) -- (2) -- (1);
\draw (2) -- (8) -- (5) -- (2) -- (3);
\draw (8) -- (4);
\draw (7) -- (9) -- (6);
\end{tikzpicture}\\ 
$F(9,14,16)$ & $F(9,14,17)$ & $F(9,14,18)$ & $F(9,14,19)$ & $F(9,15,1)$\\
\\

\begin{tikzpicture}
\node[name=1] at (0,0) {};
\node[name=2] at (1/2,{sqrt(3)/2}) {};
\node[name=3] at (1,0) {};
\node[name=4] at (2,0) {};
\node[name=5] at (3/2,{sqrt(3)/2}) {};
\node[name=6] at (2,{sqrt(3)}) {};
\node[name=7] at (5/2,{sqrt(3)/2}) {};
\draw (1) -- (2) -- (3) -- (1);
\draw (2) -- (5) -- (3);
\draw (1) -- (4);
\draw (4) -- (5) -- (7) -- (4);
\node[name=8] at (3,{sqrt(3)}) {};
\node[name=9] at (3/2,1/4) {};
\draw (1) -- (9) -- (8);
\draw (6) -- (8) -- (7);
\draw (5) -- (6) -- (7);
\end{tikzpicture} 
&
\begin{tikzpicture}
\node[name=1] at (0,0) {};
\node[name=2] at (1,0) {};
\node[name=3] at (1/2,{sqrt(3)/2}) {};
\node[name=4] at (0,1) {};
\node[name=5] at (1,1) {};
\node[name=6] at (1/2,{1 + sqrt(3)/2}) {};
\draw (6) -- (4) -- (1) -- (2) -- (3) -- (6) -- (5) -- (2) (1) -- (3);
\node[name=7] at (3/2,{sqrt(3)/2}) {};
\draw (2) -- (7) -- (3);
\node[name=8] at (2,{3/2}) {};
\node[name=9] at (1,2) {};
\draw (7) -- (8) -- (4);
\draw (8) -- (9) -- (4);
\draw (9) -- (5);
\end{tikzpicture}
&
\begin{tikzpicture}
\node[name=1] at (0,0) {};
\node[name=2] at (1,0) {};
\node[name=3] at (1/2,{sqrt(3)/2}) {};
\node[name=4] at (0,1) {};
\node[name=5] at (1,1) {};
\node[name=6] at (1/2,{1 + sqrt(3)/2}) {};
\draw (6) -- (4) -- (1) -- (2) -- (3) -- (6) -- (5) -- (2) (1) -- (3);
\node[name=7] at ({1 + sqrt(3)/2},1/2) {};
\node[name=8] at ({-sqrt(3)/2},1/2) {};
\node[name=9] at ({1 + sqrt(3)/2},-1/2) {};
\draw (9) -- (8);
\draw (1) -- (8) -- (4);
\draw (2) -- (7) -- (5);
\draw (2) -- (9) -- (7);
\end{tikzpicture}
&
\begin{tikzpicture}
\node[name=1] at (0,0) {};
\node[name=2] at (1,0) {};
\node[name=3] at (1/2,{sqrt(3)/2}) {};
\node[name=4] at (0,1) {};
\node[name=5] at (1,1) {};
\node[name=6] at (1/2,{1 + sqrt(3)/2}) {};
\draw (6) -- (4) -- (1) -- (2) -- (3) -- (6) -- (5) -- (2) (1) -- (3);
\node[name=7] at ({1 + sqrt(3)/2},1/2) {};
\node[name=8] at (3/2,{1 + sqrt(3)/2}) {};
\node[name=9] at (9/4,5/4) {};
\draw (5) -- (7) -- (2);
\draw (6) -- (8) -- (5) -- (9) -- (7);
\draw (9) -- (8);
\end{tikzpicture}
&
\begin{tikzpicture}
\node[name=1] at (0,0) {};
\node[name=2] at (1,0) {};
\node[name=3] at (1/2,{sqrt(3)/2}) {};
\node[name=4] at (0,1) {};
\node[name=5] at (1,1) {};
\node[name=6] at (1/2,{1 + sqrt(3)/2}) {};
\draw (6) -- (4) -- (1) -- (2) -- (3) -- (6) -- (5) -- (2) (1) -- (3);
\node[name=7] at (3/2,{sqrt(3)/2}) {};
\node[name=8] at (1,{sqrt(3)}) {};
\draw (2) -- (7) -- (3) -- (8) -- (7);
\node[name=9] at (2,11/8) {};
\draw (5) -- (9) -- (8);
\draw (9) -- (4);
\end{tikzpicture}\\
$F(9,15,2)$ & $F(9,15,3)$ & $F(9,15,4)$ & $F(9,15,5)$ & $F(9,15,6)$\\
\\

\begin{tikzpicture}
\node[name=1] at (0,0) {};
\node[name=2] at (1,0) {};
\node[name=3] at (1/2,{sqrt(3)/2}) {};
\node[name=4] at (0,1) {};
\node[name=5] at (1,1) {};
\node[name=6] at (1/2,{1 + sqrt(3)/2}) {};
\draw (6) -- (4) -- (1) -- (2) -- (3) -- (6) -- (5) -- (2) (1) -- (3);
\node[name=7] at (1/2,1/4) {};
\node[name=8] at (3/2,1/4) {};
\node[name=9] at (1,{1/4 - sqrt(3)/2}) {};
\draw (5) -- (8) -- (7) -- (5);
\draw (4) -- (7) -- (9) -- (8);
\draw (9) -- (3);
\end{tikzpicture}
&
\begin{tikzpicture}
\node[name=1] at (0,0) {};
\node[name=2] at (1,0) {};
\node[name=3] at (1/2,{sqrt(3)/2}) {};
\node[name=4] at (0,1) {};
\node[name=5] at (1,1) {};
\node[name=6] at (1/2,{1 + sqrt(3)/2}) {};
\draw (6) -- (4) -- (1) -- (2) -- (3) -- (6) -- (5) -- (2) (1) -- (3);
\node[name=7] at ({1 + sqrt(3)/2},1/2) {};
\node[name=8] at ({-sqrt(3)/2},1/2) {};
\node[name=9] at ({1 + sqrt(3)/2},3/2) {};
\draw (9) -- (8);
\draw (1) -- (8) -- (4);
\draw (2) -- (7) -- (5);
\draw (5) -- (9) -- (7);
\end{tikzpicture}
&
\begin{tikzpicture}
\node[name=1] at (0,0) {};
\node[name=2] at (1,0) {};
\node[name=3] at (1/2,{sqrt(3)/2}) {};
\node[name=4] at (3/2,{sqrt(3)/2}) {};
\node[name=5] at (-1/2,{sqrt(3)/2}) {};
\node[name=6] at (0,{sqrt(3)}) {};
\node[name=7] at (1,{sqrt(3)}) {};
\draw (5) -- (1) (3) -- (1) -- (2) -- (3) -- (5) -- (6) -- (7) -- (3) -- (6) (2) -- (4) -- (3);
\node[name=8] at (2,{sqrt(3)}) {};
\node[name=9] at (1,9/8) {};
\draw (6) -- (8) -- (4);
\draw (8) -- (9) -- (1);
\end{tikzpicture}
&
\begin{tikzpicture}
\node[name=1] at (0,0) {};
\node[name=2] at (1,0) {};
\node[name=3] at (1/2,{sqrt(3)/2}) {};
\node[name=4] at (3/2,{sqrt(3)/2}) {};
\node[name=5] at (-1/2,{sqrt(3)/2}) {};
\node[name=6] at (0,{sqrt(3)}) {};
\node[name=7] at (1,{sqrt(3)}) {};
\draw (5) -- (1) (3) -- (1) -- (2) -- (3) -- (5) -- (6) -- (7) -- (3) -- (6) (2) -- (4) -- (3);
\node[name=8] at (2,0) {};
\node[name=9] at (2,1) {};
\draw (2) -- (8) -- (4);
\draw (8) -- (9) -- (7);
\end{tikzpicture}
&
\begin{tikzpicture}
\node[name=1] at (0,0) {};
\node[name=2] at (1,0) {};
\node[name=3] at (1/2,{sqrt(3)/2}) {};
\node[name=4] at (3/2,{sqrt(3)/2}) {};
\node[name=5] at (-1/2,{sqrt(3)/2}) {};
\node[name=6] at (0,{sqrt(3)}) {};
\node[name=7] at (1,{sqrt(3)}) {};
\draw (5) -- (1) (3) -- (1) -- (2) -- (3) -- (5) -- (6) -- (7) -- (3) -- (6) (7) -- (4) -- (2);
\node[name=8] at (2,0) {};
\node[name=9] at (1,-1/2) {};
\draw (1) -- (9) -- (8);
\draw (2) -- (8) -- (4);
\end{tikzpicture}\\
$F(9,15,7)$ & $F(9,15,8)$ & $F(9,15,9)$ & $F(9,15,10)$ & $F(9,15,11)$\\
\\

\begin{tikzpicture}
\node[name=1] at (0,0) {};
\node[name=2] at (1,0) {};
\node[name=3] at (1/2,{sqrt(3)/2}) {};
\node[name=4] at (3/2,{sqrt(3)/2}) {};
\node[name=5] at (-1/2,{sqrt(3)/2}) {};
\node[name=6] at (0,{sqrt(3)}) {};
\node[name=7] at (1,{sqrt(3)}) {};
\draw (5) -- (1) (3) -- (1) -- (2) -- (3) -- (5) -- (6) -- (7) -- (3) -- (6) (7) -- (4) -- (2);
\node[name=8] at (2,{sqrt(3)}) {};
\node[name=9] at (1,1/2) {};
\draw (7) -- (8) -- (4) -- (9) -- (7);
\end{tikzpicture}
&
\begin{tikzpicture}
\node[name=1] at (0,0) {};
\node[name=2] at (1,0) {};
\node[name=3] at (1/2,{sqrt(3)/2}) {};
\node[name=4] at (3/2,{sqrt(3)/2}) {};
\node[name=5] at (-1/2,{sqrt(3)/2}) {};
\node[name=6] at (0,{sqrt(3)}) {};
\node[name=7] at (1,{sqrt(3)}) {};
\draw (5) -- (1) (3) -- (1) -- (2) -- (3) -- (5) -- (6) -- (7) -- (3) -- (6) (7) -- (4) -- (2);
\node[name=8] at (2,{sqrt(3)}) {};
\node[name=9] at (1,1/2) {};
\draw (7) -- (8) -- (4);
\draw (5) -- (9) -- (8);
\end{tikzpicture}
&
\begin{tikzpicture}
\node[name=1] at (0,0) {};
\node[name=2] at (1,0) {};
\node[name=3] at (1/2,{sqrt(3)/2}) {};
\node[name=4] at (3/2,{sqrt(3)/2}) {};
\node[name=5] at (-1/2,{sqrt(3)/2}) {};
\node[name=6] at (0,{sqrt(3)}) {};
\node[name=7] at (1,{sqrt(3)}) {};
\draw (5) -- (1) (3) -- (1) -- (2) -- (3) -- (5) -- (6) -- (7) -- (3) -- (6) (7) -- (4) -- (2);
\node[name=8] at (2,{sqrt(3)}) {};
\node[name=9] at (1,1/2) {};
\draw (7) -- (8) -- (4);
\draw (1) -- (9) -- (8);
\end{tikzpicture}
&
\begin{tikzpicture}
\node[name=1] at (0,0) {};
\node[name=2] at (1,0) {};
\node[name=3] at (1/2,{sqrt(3)/2}) {};
\node[name=4] at (3/2,{sqrt(3)/2}) {};
\node[name=5] at (-1/2,{sqrt(3)/2}) {};
\node[name=6] at (0,{sqrt(3)}) {};
\node[name=7] at (1,{sqrt(3)}) {};
\draw (5) -- (1) (3) -- (1) -- (2) -- (3) -- (5) -- (6) -- (7) -- (3) -- (6) (7) -- (4) -- (2);
\node[name=8] at (-1,0) {};
\draw (1) -- (8) -- (5);
\node[name=9] at (1/2,1/4) {};
\draw (8) -- (9) -- (4);
\end{tikzpicture}
&
\begin{tikzpicture}
\node[name=1] at (0,0) {};
\node[name=2] at (1,0) {};
\node[name=3] at (1/2,{sqrt(3)/2}) {};
\node[name=4] at (3/2,{sqrt(3)/2}) {};
\node[name=5] at (-1/2,{sqrt(3)/2}) {};
\node[name=6] at (0,{sqrt(3)}) {};
\node[name=7] at (1,{sqrt(3)}) {};
\draw (5) -- (1) (3) -- (1) -- (2) -- (3) -- (5) -- (6) -- (7) -- (3) -- (6) (7) -- (4) -- (2);
\node[name=8] at (2,{sqrt(3)}) {};
\node[name=9] at (2,1/2) {};
\draw (7) -- (8) -- (4);
\draw (3) -- (9) -- (8);
\end{tikzpicture}\\
$F(9,15,12)$ & $F(9,15,13)$ & $F(9,15,14)$ & $F(9,15,15)$ & $F(9,15,16)$
\end{tabular}
\end{center}

\newpage

\begin{center}
\begin{tabular}{ccccc}
\begin{tikzpicture}
\node[name=1] at (0,0) {};
\node[name=2] at (1,0) {};
\node[name=3] at (1/2,{sqrt(3)/2}) {};
\node[name=4] at (0,1) {};
\node[name=5] at (-1/2,{1 + sqrt(3)/2}) {};
\node[name=6] at (1/2,{1 + sqrt(3)/2}) {};
\node[name=7] at (2,0) {};
\node[name=8] at (3/2,{sqrt(3)/2}) {};
\node[name=9] at (5/2,{sqrt(3)/2}) {};
\draw (6) -- (4) -- (1) -- (2) -- (3) -- (6) -- (5) (1) -- (3);
\draw (4) -- (5);
\draw (8) -- (2) -- (7) -- (8) -- (3);
\draw (7) -- (9) -- (8);
\draw (5) -- (9);
\end{tikzpicture}
&
\begin{tikzpicture}
\node[name=1] at (0,0) {};
\node[name=2] at (1,0) {};
\node[name=3] at (1/2,{sqrt(3)/2}) {};
\node[name=4] at (2,1/2) {};
\node[name=5] at (3/2,{sqrt(3)}) {};
\node[name=6] at (5/2,{sqrt(3)}) {};
\node[name=7] at (2,0) {};
\node[name=8] at (3/2,{sqrt(3)/2}) {};
\node[name=9] at (5/2,{sqrt(3)/2}) {};
\draw (6) -- (4) (1) -- (2) -- (3)  (6) -- (5) (1) -- (3);
\draw (4) -- (5);
\draw (8) -- (2) -- (7) -- (8) -- (3);
\draw (7) -- (9) -- (8);
\draw (5) -- (8);
\draw (6) -- (9);
\draw (1) -- (4);
\end{tikzpicture}
&
\begin{tikzpicture}
\node[name=1] at (0,0) {};
\node[name=2] at (1/2,{sqrt(3)/2}) {};
\node[name=3] at (1,0) {};
\node[name=4] at (2,{sqrt(3)}) {};
\node[name=5] at (3/2,{sqrt(3)/2}) {};
\node[name=6] at (-1/2,{sqrt(3)/2}) {};
\node[name=7] at (5/2,{sqrt(3)/2}) {};
\node[name=8] at (1,{sqrt(3)}) {};
\node[name=9] at (-1,0) {};
\draw (3) -- (1);
\draw (5) -- (3);
\draw (4) -- (5) -- (7) -- (4);
\draw (1) -- (6) -- (2);
\draw (2) -- (8) -- (5) -- (2) -- (3);
\draw (8) -- (4);
\draw (1) -- (9) -- (6);
\draw (9) -- (7);
\end{tikzpicture}
&
\begin{tikzpicture}
\node[name=1] at (0,0) {};
\node[name=2] at (1,0) {};
\node[name=3] at (1/2,{sqrt(3)/2}) {};
\node[name=5] at (-1/2,{sqrt(3)/2}) {};
\node[name=6] at ({sqrt(3)/2 - 1/2}, {1/2 + sqrt(3)/2}) {};
\node[name=7] at (-1/2,{1 + sqrt(3)/2}) {};
\node[name=8] at ({-1/2 + sqrt(3)/2},{3/2 + sqrt(3)/2}) {};
\node[name=4] at (1,1) {};
\node[name=9] at (3/2,3/2) {};
\draw (1) -- (2) -- (3) -- (5) -- (1) -- (3);
\draw (5) -- (6) -- (8) -- (7) -- (5);
\draw (6) -- (7);
\draw (2) -- (4) -- (9) -- (8) -- (4) (2) -- (9);
\end{tikzpicture}
&
\begin{tikzpicture}
\node[name=1] at (0,0) {};
\node[name=2] at (1,0) {};
\node[name=3] at (1/2,{sqrt(3)/2}) {};
\node[name=5] at (-1/2,{sqrt(3)/2}) {};
\node[name=6] at ({sqrt(3)/2 - 1/2}, {1/2 + sqrt(3)/2}) {};
\node[name=7] at (-1/2,{1 + sqrt(3)/2}) {};
\node[name=8] at ({sqrt(3)/2 - 1/2},{3/2 + sqrt(3)/2}) {};
\node[name=4] at (1,2) {};
\node[name=9] at (3/2,1) {};
\draw (1) -- (2) -- (3) -- (5) -- (1) -- (3);
\draw (5) -- (6) -- (8) -- (7) -- (5);
\draw (6) -- (7);
\draw (3) -- (4) (9) -- (6);
\draw (4) -- (9);
\draw (8) -- (4) (9) -- (2);
\end{tikzpicture}\\
$F(9,15,17)$ & $F(9,15,18)$ & $F(9,15,19)$ & $F(9,15,20)$ & $F(9,15,21)$\\
\\
\begin{tikzpicture}
\node[name=1] at (0,0) {};
\node[name=2] at (1,0) {};
\node[name=3] at (1/2,{-sqrt(3)/2}) {};
\node[name=4] at (0,1) {};
\node[name=5] at (1,1) {};
\node[name=6] at (1/2,1/2) {};
\node[name=7] at (2,0) {};
\node[name=8] at (3/2,{-sqrt(3)/2}) {};
\node[name=9] at (-1/2, 1/3) {};
\draw (6) -- (4) -- (1) -- (2) -- (3) (6) -- (5) (1) -- (3);
\draw (4) -- (5) -- (2);
\draw (8) -- (2) -- (7) -- (8) -- (3);
\draw (4) -- (9) -- (6);
\draw (9) -- (7);
\end{tikzpicture}
&
\begin{tikzpicture}
\node[name=1] at (0,0) {};
\node[name=2] at (1,0) {};
\node[name=3] at (0,1) {};
\node[name=4] at (1,1) {};
\node[name=5] at (1/2,{1 + sqrt(3)/2}) {};
\node[name=6] at (1/2,{-sqrt(3)/2}) {};
\node[name=7] at (2,1/2) {};
\node[name=8] at (5/2,1) {};
\node[name=9] at (5/2,0) {};
\draw (1) -- (2) -- (4) -- (3) -- (1) (8) -- (9);
\draw (1) -- (6) -- (2) (7) -- (6) -- (9) -- (7);
\draw (3) -- (5) -- (4) (7) -- (5) -- (8) -- (7);
\end{tikzpicture}
&
\begin{tikzpicture}
\node[name=1] at (0,0) {};
\node[name=2] at (1,0) {};
\node[name=3] at (1/2,{sqrt(3)/2}) {};
\node[name=4] at (0,-3/2) {};
\node[name=6] at (1/2,-1/2) {};
\node[name=7] at (2,0) {};
\node[name=8] at (3/2,{sqrt(3)/2}) {};
\node[name=5] at (5/4,{-1/2}) {};
\node[name=9] at (1.75,-3/2) {};
\draw (6) -- (4) -- (1) -- (2) -- (3) -- (6) (1) -- (3);
\draw (8) -- (2) -- (7) -- (8) -- (3);
\draw (6) -- (5) -- (9) -- (4);
\draw (5) -- (7) -- (9);
\end{tikzpicture}
&
\begin{tikzpicture}
\node[name=1] at (0,0) {};
\node[name=2] at (1,0) {};
\node[name=3] at (1/2,{sqrt(3)/2}) {};
\node[name=4] at (0,9/8) {};
\node[name=5] at (1,9/8) {};
\node[name=6] at (1/2,{1 + sqrt(3)/2}) {};
\node[name=7] at (2,0) {};
\node[name=8] at (3/2,{sqrt(3)/2}) {};
\node[name=9] at (3/2,{1 + sqrt(3)/2}) {};
\draw (6) -- (4) -- (1) -- (2) -- (3) -- (6) -- (5) (1) -- (3);
\draw (4) -- (5);
\draw (8) -- (2) -- (7) -- (8) -- (3);
\draw (6) -- (9) -- (5);
\draw (9) -- (7);
\end{tikzpicture}
&
\begin{tikzpicture}
\node[name=1] at (0,0) {};
\node[name=2] at (1,0) {};
\node[name=3] at (1/2,{sqrt(3)/2}) {};
\node[name=4] at (0,1) {};
\node[name=5] at (-1/2,{1 + sqrt(3)/2}) {};
\node[name=6] at (1/2,{1 + sqrt(3)/2}) {};
\node[name=7] at (2,0) {};
\node[name=8] at (3/2,{sqrt(3)/2}) {};
\node[name=9] at (-1,1) {};
\draw (6) -- (4) -- (1) -- (2) -- (3) -- (6) -- (5) (1) -- (3);
\draw (4) -- (5);
\draw (8) -- (2) -- (7) -- (8) -- (3);
\draw (4) -- (9) -- (5);
\draw (9) -- (7);
\end{tikzpicture}\\
$F(9,15,22)$ & $F(9,15,23)$ & $F(9,15,24)$ & $F(9,15,25)$ & $F(9,15,26)$
\end{tabular}
\begin{tabular}{cccc}
\begin{tikzpicture}
\node[name=1] at (0,0) {};
\node[name=2] at (1,0) {};
\node[name=3] at (2,0) {};
\node[name=4] at (1/2,{sqrt(3)/2}) {};
\node[name=5] at (3/2,{sqrt(3)/2}) {};
\node[name=6] at (1,{sqrt(3)}) {};
\draw (1) -- (2) -- (3) -- (5) -- (2) -- (4) -- (1);
\draw (4) -- (5) -- (6) -- (4);
\node[name=7] at (1/2,1/3) {};
\node[name=8] at (3/2,1/3) {};
\node[name=9] at (1,9/8) {};
\draw (1) -- (7) -- (8) -- (9) -- (7);
\draw (8) -- (3);
\draw (9) -- (6);
\end{tikzpicture}
&
\begin{tikzpicture}
\node[name=1] at (0,0) {};
\node[name=2] at (1,0) {};
\node[name=3] at (1/2,{sqrt(3)/2}) {};
\node[name=4] at (3/2,{sqrt(3)/2}) {};
\node[name=5] at (1/2,{1 + sqrt(3)/2}) {};
\node[name=6] at (3/2,{1 + sqrt(3)/2}) {};
\draw (2) -- (3) -- (1) -- (2) -- (4) -- (3);
\draw (3) -- (5) -- (6) -- (4);
\node[name=7] at (1, 5/4) {};
\node[name=8] at (5/2,{sqrt(3)/2}) {};
\draw (5) -- (7) -- (6) (8) -- (7);
\node[name=9] at (5/2, {1 + sqrt(3)/2}) {};
\draw (6) -- (9) -- (8);
\draw (9) -- (4);
\draw (8) -- (1);
\end{tikzpicture}
&
\begin{tikzpicture}
\node[name=2] at (1,0) {};
\node[name=4] at (2,{1 + sqrt(3)}) {};
\node[name=5] at (3/2,{1 + sqrt(3)/2}) {};
\node[name=6] at (5/2,{1 + sqrt(3)/2}) {};
\node[name=7] at (2,0) {};
\node[name=8] at (3/2,{sqrt(3)/2}) {};
\node[name=9] at (5/2,{sqrt(3)/2}) {};
\draw (6) -- (4) (6) -- (5);
\draw (4) -- (5);
\draw (8) -- (2) -- (7) -- (8);
\draw (7) -- (9) -- (8);
\draw (5) -- (8);
\draw (6) -- (9);
\node[name=1] at (3/4,7/4) {};
\node[name=3] at (9/4,5/4) {};
\draw (4) -- (1) -- (2) -- (3) -- (4) (1) -- (3);
\end{tikzpicture}
&
\begin{tikzpicture}
\node[name=1] at (-1/2,0) {};
\node[name=2] at (1/2,0) {};
\node[name=3] at (5/2,0) {};
\node[name=4] at (1/2, {sqrt(3)/2}) {};
\node[name=5] at (1/2, {-sqrt(3)/2}) {};
\node[name=6] at (3/2, {sqrt(3)/2}) {};
\node[name=7] at (3/2, {-sqrt(3)/2}) {};
\draw (1) -- (4) -- (2) -- (5) -- (1);
\draw (3) -- (6) -- (2) -- (7) -- (3);
\draw (4) -- (6) (5) -- (7);
\node[name=8] at (7/8,5/8) {};
\node[name=9] at (7/8,-5/8) {};
\draw (8) -- (9) -- (3) -- (8) -- (1) -- (9);
\end{tikzpicture}\\
$F(9,15,27)$ & $F(9,15,28)$ & $F(9,15,29)$ & $F(9,15,30)$\\
\\

\begin{tikzpicture}
\node[name=1] at (0,0) {};
\node[name=2] at (1,0) {};
\node[name=3] at (2,0) {};
\node[name=4] at (3,0) {};
\node[name=5] at (1/2,-{sqrt(3)/2}) {};
\node[name=6] at (5/2,-{sqrt(3)/2}) {};
\node[name=7] at (3/2,{sqrt(3)/2}) {};
\node[name=8] at (1,{sqrt(3)}) {};
\node[name=9] at (2,{sqrt(3)}) {};
\draw (1) -- (2) -- (3) -- (4) -- (6) -- (3) -- (7) -- (2) -- (5) -- (1);
\draw (7) -- (8) -- (9) -- (7);
\draw (1) -- (8) (9) -- (4) (5) -- (6);
\end{tikzpicture}
&
\begin{tikzpicture}
\node[name=1] at (0,0) {};
\node[name=2] at (1,0) {};
\node[name=3] at (1/2,{sqrt(3)/2}) {};
\node[name=4] at (3/2,{sqrt(3)/2}) {};
\node[name=5] at (0,-1) {};
\node[name=6] at (1,-1) {};
\node[name=7] at (1/2,{1 + sqrt(3)/2}) {};
\node[name=8] at (3/2,{1 + sqrt(3)/2}) {};
\draw (1) -- (2) -- (3) -- (4) -- (2) (3) -- (1);
\draw (1) -- (5) -- (6) -- (2);
\draw (3) -- (7) -- (8) -- (4);
\node[name=9] at (-3/8,{sqrt(3)/2}) {};
\draw (7) -- (9) -- (8);
\draw (5) -- (9) --  (6);
\end{tikzpicture}
&
\begin{tikzpicture}
\node[name=1] at (0,0) {};
\node[name=2] at (1,0) {};
\node[name=3] at (1/2,{sqrt(3)/2}) {};
\node[name=4] at (3/2,{sqrt(3)/2}) {};
\node[name=5] at (2,0) {};
\node[name=6] at (1/2,{1 + sqrt(3)/2}) {};
\node[name=7] at (3/2,{1 + sqrt(3)/2}) {};
\node[name=8] at (2,5/4) {};
\node[name=9] at (0,5/4) {};
\draw (1) -- (2) -- (3) -- (4) -- (5) -- (2) -- (4);
\draw (1) -- (3) -- (6) -- (7) -- (4);
\draw (6) -- (8) -- (9) -- (7);
\draw (8) -- (5);
\draw (9) -- (1);
\end{tikzpicture}
&
\begin{tikzpicture}
\node[name=1] at (2,0) {};
\node[name=2] at (1,0) {};
\node[name=3] at (3/2,{sqrt(3)/2}) {};
\node[name=4] at (1/2,{sqrt(3)/2}) {};
\node[name=5] at (1/2,{1 + sqrt(3)/2}) {};
\node[name=6] at (3/2,{1 + sqrt(3)/2}) {};
\draw (2) -- (3) -- (1) -- (2) -- (4) -- (3);
\draw (3) -- (5) -- (6) -- (4);
\node[name=7] at (1, {1 + sqrt(3)}) {};
\node[name=8] at (2, {1 + sqrt(3)}) {};
\draw (5) -- (7) -- (6) (8) -- (7);
\node[name=9] at (5/2, {1 + sqrt(3)/2}) {};
\draw (6) -- (9) -- (8);
\draw (8) -- (1) -- (9);
\end{tikzpicture}\\
$F(9,15,31)$ & $F(9,15,32)$ & $F(9,15,33)$ & $F(9,15,34)$
\end{tabular}
\end{center}

\newpage

\section*{Appendix B:  Exact coordinates for unit-distance graphs}

We collect here coordinates for vertices $(x,y) \in \mathbf{R}^2$ of the unit-distance graphs used for \cref{n8theorem,n9theorem}.  In Table 1, we report exact coordinates for the vertices of $G_{27}$.  In \cref{g118table,h1table,h2table}, we report rounded coordinates for $(x,y) \in \mathbf{R}^2$ along with the minimal polynomial for $z = x + iy$.  We have scaled these minimal polynomials to ensure integer coefficients, and enough precision has been reported in each rounded coordinate to distinguish distinct roots of each polynomial.  The exact coordinates can also be found in the \texttt{SageMath} worksheet (for $G_{27}$ and $G_{118}$) and the \texttt{Mathematica} notebook (for $H_1$ and $H_2$) that we provide~\cite{smallUDcode}.

\begin{table}[b!]
\caption{Coordinates for the vertices of the embedded unit-distance graph $G_{27}$.}\label{g27table}
\begin{center}
{\def\arraystretch{1.10}
\begin{tabular}{rr}
$x$ & $y$\\
\hline
$ 0 $ & $ 0 $\\
$ 1 $ & $ 0 $\\
$ -1 $ & $ 0 $\\
$ 2 $ & $ 0 $\\
$ -1/2 $ & $ \sqrt{3}/2 $\\
$ 1/2 $ & $ \sqrt{3}/2 $\\
$ 1/2 $ & $ -\sqrt{3}/2 $\\
$ -1/2 $ & $ -\sqrt{3}/2 $\\
$ 3/2 $ & $ \sqrt{3}/2 $\\
$ 5/6 $ & $ \sqrt{11}/6 $\\
$ 11/6 $ & $ \sqrt{11}/6 $\\
$ 5/2 $ & $ \sqrt{3}/2 $\\
$ 3/2 $ & $ -\sqrt{3}/2 $\\
$ 12/7 $ & $ \sqrt{3}/7 $\\
$ 11/14 $ & $ 5\sqrt{3}/14 $\\
$ 4/3 $ & $ \sqrt{11}/6 + \sqrt{3}/2 $\\
$ \sqrt{33}/6 $ & $ \sqrt{3}/6 $\\
$ \sqrt{33}/6 + 1 $ & $ \sqrt{3}/6 $\\
$ \sqrt{33}/6 + 1/2 $ & $ -\sqrt{3}/3 $\\
$ \sqrt{33}/12 + 1/12 $ & $ -\sqrt{11}/12 + \sqrt{3}/12$\\
$ \sqrt{33}/12 + 11/12 $ & $ \sqrt{11}/12 + \sqrt{3}/12 $\\
$ \sqrt{33}/12 + 1/4 $ & $ \sqrt{11}/4 - 5\sqrt{3}/12 $\\
$ \sqrt{33}/12 - 5/12 $ & $ -\sqrt{11}/12 - 5\sqrt{3}/12 $\\
$ \sqrt{33}/12 + 13/12 $ & $ -\sqrt{11}/12 + \sqrt{3}/12 $\\
$ \sqrt{33}/12 + 5/12 $ & $ \sqrt{11}/12 - 5\sqrt{3}/12 $\\
$ -\sqrt{385}/60 + 17/12 $ & $ -7\sqrt{35}/60 + \sqrt{11}/12 $\\
$ \sqrt{385}/60 + 17/12 $ & $ 7\sqrt{35}/60 + \sqrt{11}/12 $\\
\end{tabular}}
\end{center}
\end{table}

\clearpage


{\def\arraystretch{1.2}
\captionsetup{width=\textwidth}
\begin{longtable}[c]{rrl}
\caption{Coordinates for the vertices of $G_{118}$ that do not already appear in  $G_{27}$.}\label{g118table}
\\
$x$ & $y$ & Minimal Polynomial of $z = x + iy$\\
\hline
\endhead
$ 0.00000 $ & $ 1.00000 $ & $ z^{2} + 1 $\\
$ 0.00000 $ & $ 1.73205 $ & $ z^2 + 3 $\\
$ 0.00000 $ & $ -1.73205 $ & $ z^2 + 3 $\\
$ 0.70711 $ & $ 0.00000 $ & $ 2z^2 - 1 $\\
$ 0.00000 $ & $ 0.70711 $ & $ 2z^2 + 1 $\\
$ 1.00000 $ & $ 1.00000 $ & $ z^{2} - 2z + 2 $\\
$ 1.00000 $ & $ 1.73205 $ & $ z^2 - 2z + 4 $\\
$ 1.00000 $ & $ -1.73205 $ & $ z^2 - 2z + 4 $\\
$ -1.50000 $ & $ 0.86603 $ & $ z^2 + 3z + 3 $\\
$ 0.16667 $ & $ -0.55277 $ & $ 3z^2 - z + 1 $\\
$ -1.00000 $ & $ -1.73205 $ & $ z^2 + 2z + 4 $\\
$ 2.00000 $ & $ -1.73205 $ & $ z^2 - 4z + 7 $\\
$ 0.87500 $ & $ -0.48412 $ & $ 4z^2 - 7z + 4 $\\
$ 1.57143 $ & $ 1.23718 $ & $ 7z^2 - 22z + 28 $\\
$ 0.86603 $ & $ 0.50000 $ & $ z^4 - z^2 + 1 $\\
$ 1.13397 $ & $ 0.50000 $ & $ z^4 - 8z^3 + 23z^2 - 28z + 13 $\\
$ 0.70711 $ & $ 0.70711 $ & $ z^4 + 1 $\\
$ 0.95743 $ & $ -0.28868 $ & $ 3z^4 - 5z^2 + 3 $\\
$ 0.35355 $ & $ 0.93541 $ & $ 2z^4 + 3z^2 + 2 $\\
$ 0.93541 $ & $ 0.35355 $ & $ 2z^4 - 3z^2 + 2 $\\
 $ 0.45743 $ & $ -0.57735 $ & $ 3z^4 + 6z^3 + z^2 - 2z + 4 $\\
$ 0.95743 $ & $ -1.44338 $ & $ 3z^4 + 7z^2 + 27 $\\
$ 0.51824 $ & $ -0.13381 $ & $ 16z^4 - 60z^3 + 81z^2 - 45z + 9 $\\
$ -1.21353 $ & $ -0.16540 $ & $ 4z^4 + 6z^3 + 3z^2 + 9z + 9 $\\
$ -0.35676 $ & $ 0.35031 $ & $ 16z^4 - 4z^3 - 3z^2 - z + 1 $\\
$ -0.33853 $ & $ -0.64952 $ & $ 16z^4 - 32z^3 + 15z^2 + z + 19 $\\
$ 0.22871 $ & $ -1.55084 $ & $ 3z^4 + 3z^3 + 7z^2 + 10z + 4 $\\
$ -0.27129 $ & $ 0.10747 $ & $ 3z^4 + 9z^3 + 16z^2 + 7z + 1 $\\
$ 0.72871 $ & $ -0.68482 $ & $ 3z^4 - 3z^3 + 4z^2 - 3z + 3 $\\
$ -0.39538 $ & $ -0.42072 $ & $ 9z^4 - 3z^3 - 2z^2 - z + 1 $\\
$ 1.22871 $ & $ -1.55084 $ & $ 3z^4 - 9z^3 + 16z^2 - 7z + 1 $\\
$ 0.22871 $ & $ 0.97349 $ & $ 3z^4 + 3z^3 + 4z^2 + 3z + 3 $\\
$ 1.06205 $ & $ 0.73398 $ & $ 9z^4 - 21z^3 + 34z^2 - 35z + 25 $\\
$ 1.18614 $ & $ -1.26217 $ & $ z^4 + z^3 - 2z^2 + 3z + 9 $\\
$ -0.33333 $ & $ 0.31325 $ & $ 9z^4 + 12z^3 + 25z^2 + 14z + 4 $\\
$ -0.43795 $ & $ -0.13205 $ & $ 9z^4 + 33z^3 + 43z^2 + 22z + 4 $\\
$ -0.72871 $ & $ -1.83952 $ & $ 3z^4 + 3z^3 + 10z^2 - 5z + 1 $\\
$ 0.39538 $ & $ 0.42072 $ & $ 9z^4 + 3z^3 - 2z^2 + z + 1 $\\
$ 1.39538 $ & $ -1.31133 $ & $ 9z^4 - 33z^3 + 88z^2 - 121z + 121 $\\
$ 0.35676 $ & $ -0.35031 $ & $ 16z^4 + 4z^3 - 3z^2 + z + 1 $\\
$ 1.06205 $ & $ -0.99807 $ & $ 9z^4 - 21z^3 + 25z^2 - 8z + 4 $\\
$ 0.22871 $ & $ -0.75856 $ & $ 3z^4 + 3z^3 + 19z^2 - 6z + 12 $\\
$ -1.45743 $ & $ -1.15470 $ & $ 3z^4 + 6z^3 + 7z^2 + 4z + 16 $\\
$ 0.37500 $ & $ -1.35015 $ & $ 16z^4 - 24z^3 + 45z^2 - 27z + 9 $\\
$ -0.04257 $ & $ 0.28868 $ & $ 3z^4 + 12z^3 + 13z^2 + 2z + 1 $\\
$ -0.71353 $ & $ 0.70063 $ & $ 4z^4 - 2z^3 - 3z^2 - 2z + 4 $\\
$ -0.21353 $ & $ 1.56665 $ & $ 4z^4 - 10z^3 + 15z^2 - 25z + 25 $\\
$ -1.71353 $ & $ 0.70063 $ & $ 4z^4 + 14z^3 + 15z^2 + 2z + 1 $\\
$ -0.60462 $ & $ 0.42072 $ & $ 9z^4 + 39z^3 + 61z^2 + 42z + 12 $\\
$ 0.53647 $ & $ -1.13364 $ & $ 4z^4 - 22z^3 + 45z^2 - 49z + 31 $\\
$ 1.89538 $ & $ -0.44530 $ & $ 9z^4 - 51z^3 + 115z^2 - 128z + 64 $\\
$ 0.56205 $ & $ -1.86410 $ & $ 9z^4 - 3z^3 + 43z^2 + 8z + 64 $\\
$ -0.22871 $ & $ -0.97349 $ & $ 3z^4 - 3z^3 + 4z^2 - 3z + 3 $\\
$ 0.39538 $ & $ -1.31133 $ & $ 9z^4 + 3z^3 + 43z^2 - 8z + 64 $\\
$ -0.85676 $ & $ -0.51571 $ & $ 16z^4 + 28z^3 + 33z^2 + 28z + 16 $\\
$ 0.06205 $ & $ 0.73398 $ & $ 9z^4 + 15z^3 + 25z^2 + 6z + 12 $\\
$ -1.39538 $ & $ -0.42072 $ & $ 9z^4 + 33z^3 + 43z^2 + 22z + 4 $\\
$ 1.79076 $ & $ -0.89060 $ & $ 9z^4 - 30z^3 + 64z^2 - 120z + 144 $\\
$ 0.85676 $ & $ 0.51571 $ & $ 16z^4 - 28z^3 + 33z^2 - 28z + 16 $\\
$ -0.27129 $ & $ -0.68482 $ & $ 3z^4 + 9z^3 + 13z^2 + 8z + 4 $\\
$ -0.56205 $ & $ 0.13205 $ & $ 9z^4 + 3z^3 - 2z^2 + z + 1 $\\
$ 0.01824 $ & $ -0.99983 $ & $ 16z^4 - 28z^3 + 33z^2 - 28z + 16 $\\
$ -0.93795 $ & $ -0.99807 $ & $ 9z^4 + 51z^3 + 115z^2 + 128z + 64 $\\
$ 2.18614 $ & $ -1.26217 $ & $ z^4 - 3z^3 + z^2 + 6z + 4 $\\
$ -0.48176 $ & $ -0.13381 $ & $ 16z^4 + 4z^3 - 3z^2 + z + 1 $\\
$ -0.51824 $ & $ 0.13381 $ & $ 16z^4 + 60z^3 + 81z^2 + 45z + 9 $\\
$ 0.48176 $ & $ 0.13381 $ &  $16z^{4} - 4z^{3} - 3z^{2} - z + 1$\\
$ 1.13397 $ & $ -0.50000 $ & $ z^4 - 8z^3 + 23z^2 - 28z + 13 $\\
$ 1.28897 $ & $ 1.28897 $ & $ 16z^8 + 184z^4 + 81 $\\
$ 1.64252 $ & $ 0.35355 $ & $ 4z^8 - 20z^6 + 29z^4 + 4z^2 + 1 $\\
$ 0.35355 $ & $ 1.64252 $ & $ 4z^8 + 20z^6 + 29z^4 - 4z^2 + 1 $\\
$ 1.39781 $ & $ -0.91747 $ & $ 9z^{8} - 18z^{7} + 30z^{5} + z^{4} - 22z^{3} + 12z^{2} - 4z + 1 $\\
$ 0.74967 $ & $ 0.96816 $ & $ 13z^{8} - 23z^{6} + 34z^{5} + 17z^{4} - 52z^{3} + 36z^{2} - 16z + 4 $\\
$ 1.14690 $ & $ -1.15470 $ & $ 9z^8 + 18z^7 + 51z^6 + 36z^5 + 79z^4$\\
& & $+\;154z^3 + 305z^2 + 296z + 124 $\\
$ 0.30574 $ & $ 0.83455 $ & $ 2025z^8 + 8100z^7 + 15480z^6 + 18090z^5 + 17734z^4$\\
& & $+\;14768z^3 + 10117z^2 + 4398z + 1164 $\\
$ -0.40320 $ & $ 0.12928 $ & $ 225z^8 + 375z^7 + 190z^6 + 485z^5 + 581z^4$\\
& & $+\;32z^3 - 52z^2 + 32z + 16 $\\
$ 0.78118 $ & $ -1.10774 $ & $ 279z^8 - 93z^7 + 655z^6 + 82z^5 + 828z^4$\\
& & $+\;484z^3 + 508z^2 + 216z + 48 $\\
$ 1.71329 $ & $ 1.03118 $ & $ 13z^8 - 52z^7 + 113z^6 - 128z^5 + 88z^4$\\
& & $+\;48z^3 - 27z^2 + 108z + 81 $\\
$ 1.74967 $ & $ 0.96816 $ & $ 13z^8 - 104z^7 + 341z^6 - 556z^5 + 412z^4$\\
& & $-\;48z^3 - 27z^2 - 108z + 81 $\\
$ 0.25033 $ & $ -0.96816 $ & $ 13z^8 - 104z^7 + 341z^6 - 624z^5 + 752z^4$\\
& & $-\;624z^3 + 341z^2 - 104z + 13 $\\
$ 0.25033 $ & $ 0.96816 $ & $ 13z^8 - 104z^7 + 341z^6 - 624z^5 + 752z^4$\\
& & $-\;624z^3 + 341z^2 - 104z + 13 $\\
$ -0.48172 $ & $ -0.67568 $ & $ 999z^8 - 666z^7 + 912z^6 - 1230z^5 + 718z^4$\\
& & $-\;548z^3 + 483z^2 - 248z + 64 $\\
$ 0.30608 $ & $ 0.11499 $ & $ 16z^8 - 64z^7 + 183z^6 - 325z^5 + 484z^4$\\
& & $-\;501z^3 + 288z^2 - 81z + 9 $\\
$ 0.59680 $ & $ 0.12928 $ & $225z^{8} - 1425z^{7} + 3865z^{6} - 5380z^{5} + 3631z^{4}$\\
& & $-\;717z^{3} - 237z^{2} + 18z + 36$\\
$ 0.09680 $ & $ 0.99530 $ & $225z^{8} - 525z^{7} + 565z^{6} - 490z^{5} + 486z^{4}$\\
& & $-\;490z^{3} + 565z^{2} - 525z + 225$\\
$ 0.47571 $ & $ -0.38700 $ & $999z^{8} - 666z^{7} - 2418z^{6} - 2634z^{5} + 7384z^{4}$\\
& & $+\;3202z^{3} - 3633z^{2} + 238z + 1156$\\
$ -1.23654 $ & $ -0.42349 $ & $675z^{8} + 5625z^{7} + 21495z^{6} + 51510z^{5} + 85858z^{4}$\\
& & $+\;98070z^{3} + 70987z^{2} + 28665z + 5047$\\
$ 0.61798 $ & $ 0.12699 $ & $ 304z^8 - 684z^7 + 195z^6 + 135z^5 + 576z^4$\\
& & $-\;297z^3 - 108z^2 - 81z + 81 $\\
\\
$ 0.36350 $ & $ 0.77128 $ & $ 105975z^{16} - 671175z^{15} + 2077590z^{14} - 3905505z^{13} + 4876441z^{12}$\\
& &  $-\;3943474z^{11} + 1710135z^{10} + 382907z^9- 1122935z^8$\\
& &  $+\;835038z^7-235773z^6 - 71793z^5 +111474z^4$\\
& & $-\;54999z^3 + 14337z^2 - 2592z + 324$\\
$ 1.41417 $ & $ 1.91020 $ & $637z^{16} - 10192z^{15} + 75677z^{14} - 343602z^{13} + 1055953z^{12} $\\
& & $-\;2302592z^{11} + 3687774z^{10} - 4669072z^{9} + 5664150z^{8}$\\
& & $-\;8051452z^{7} + 11919114z^{6} - 14613824z^{5} + 13282489z^{4}$ \\
& & $-\;8473080z^{3} + 3559697z^{2} - 880066z + 98713$\\
$ 0.41417 $ & $ 1.91020 $ & $637z^{16} - 763z^{14} + 2436z^{13} - 2286z^{12} - 15348z^{11}$\\
& & $+\;29185z^{10} - 17988z^{9} + 16314z^{8} - 28356z^{7}$\\
& & $+\;15845z^{6} + 11436z^{5} - 14891z^{4} + 924z^{3}$\\
& & $+\;5076z^{2} - 2376z + 324$\\

\end{longtable}
}

\begin{table}[b!]
\caption{Coordinates for the unit-distance embedding of $H_1$ depicted in \cref{uglyFigure}.}\label{h1table}
\begin{center}
{\def\arraystretch{1.15}
\begin{tabular}{rrl}
$x$ & $y$ & Minimal Polynomial for $z = x + iy$\\
\hline
$0.00000$ & $0.00000$ & $z$\\
$1.00000$ & $0.00000$ & $z - 1$\\
$2.00000$ & $0.00000$ & $z - 2$\\
$0.50000$ & $0.86603$ & $z^2 - z + 1$\\
$1.50000$ & $0.86603$ & $z^2 - 3z + 3$\\
$0.62836$ & $0.37588$ & $21z^{10} - 231z^9 + 1030z^8 - 2379z^7 + 3189z^6 - 3638z^5$\\
& & $+\;6660z^4 - 11904z^3 + 12832z^2 - 7200z + 1728$\\
$1.72803$ & $-0.96230$ & $7z^{10} - 105z^9 + 726z^8 - 3068z^7 + 8928z^6 - 19212z^5$\\
& & $+\;31888z^4 - 40908z^3 + 38376z^2 - 23085z + 6561$\\
$0.79189$ & $-0.61066$ & $63z^{10} - 672z^9 + 3181z^8 - 8793z^7 + 15717z^6 - 18991z^5$\\
& & $+\;15717z^4 - 8793z^3 + 3181z^2 - 672z + 63$\\
$1.56449$ & $0.02423$ & $63z^{10} - 966z^9 + 7024z^8 - 32067z^7 + 102246z^6 - 238590z^5$ \\
& & $+\;412569z^4 - 520074z^3 + 454356z^2 - 246240z + 62208$
\end{tabular}
}
\end{center}
\end{table}

\begin{table}
\caption{Coordinates for the unit-distance embedding of $H_2$ depicted in \cref{uglyFigure}.}\label{h2table}
\begin{center}
{\def\arraystretch{1.15}
\begin{tabular}{rrl}
$x$ & $y$ & Minimal Polynomial for $z = x + iy$\\
\hline
$0.00000$ & $0.00000$ & $z$\\
$1.73205$ & $0.00000$ & $z^2 - 3$\\
$0.86603$ & $0.50000$ & $z^4 - z^2 + 1$\\
$0.86603$ & $-0.50000$ & $z^4 - z^2 + 1$\\
$0.73836$ & $-0.11215$ & $27z^{12} - 108z^{10} + 522z^8 - 1471z^6 + 3357z^4 - 2619z^2 + 675$\\
$0.33267$ & $-0.94304$ & $6561z^{24} - 39366z^{22} + 129033z^{20} - 438696z^{18} + 1030320z^{16}$\\
& & $-\;421713 z^{14} - 383282z^{12} - 421713 z^{10} + 1030320 z^8 $\\
& & $-\;438696 z^6 + 129033 z^4 - 39366 z^2 + 6561$\\
$0.98304$ & $-0.18342$ & $6561z^{24} - 39366z^{22} + 129033z^{20} - 438696z^{18} + 1030320z^{16}$\\
& & $-\;421713 z^{14} - 383282z^{12} - 421713 z^{10} + 1030320 z^8 $\\
& & $-\;438696 z^6 + 129033 z^4 - 39366 z^2 + 6561$\\
$-0.39973$ & $-0.91664$ & $729z^{24} - 1458z^{22} + 2673 z^{20} - 8964z^{18} + 8316z^{16}$\\
& & $+\;927z^{14} + 23110z^{12} + 927z^{10} + 8316z^8$\\
& & $-\;8964z^6 + 2673z^4 - 1458z^2 + 729$\\
$-0.59397$ & $0.80449$ & $729z^{24} - 1458z^{22} + 2673 z^{20} - 8964z^{18} + 8316z^{16}$\\
& & $+\;927z^{14} + 23110z^{12} + 927z^{10} + 8316z^8$\\
& & $-\;8964z^6 + 2673z^4 - 1458z^2 + 729$\\
\end{tabular}
}
\end{center}
\end{table}
\end{document}